\DeclareFontFamily{OML}{rsfs}{\skewchar\font'177}
\DeclareFontShape{OML}{rsfs}{m}{n}{ <5> <6> rsfs5 <7> <8> <9> rsfs7
  <10> <10.95> <12> <14.4> <17.28> <20.74> <24.88> rsfs10 }{}
\DeclareMathAlphabet{\mathfs}{OML}{rsfs}{m}{n}
\newtheorem{theorem}{Theorem}
\newtheorem{lemma}[theorem]{Lemma}
\newtheorem{proposition}[theorem]{Proposition}
\newtheorem{corollary}[theorem]{Corollary}
\theoremstyle{definition}
\theoremstyle{remark}
\newtheorem{remark}[theorem]{\bf Remark}
\numberwithin{equation}{section}
\numberwithin{theorem}{section}
\newcommand{\intav}[1]{\mathchoice {\mathop{\vrule width 6pt height 3 pt depth  -2.5pt
\kern -8pt \intop}\nolimits_{\kern -6pt#1}} {\mathop{\vrule width
5pt height 3  pt depth -2.6pt \kern -6pt \intop}\nolimits_{#1}}
{\mathop{\vrule width 5pt height 3 pt depth -2.6pt \kern -6pt
\intop}\nolimits_{#1}} {\mathop{\vrule width 5pt height 3 pt depth
-2.6pt \kern -6pt \intop}\nolimits_{#1}}}
\newcommand{\intavl}[1]{\mathchoice {\mathop{\vrule width 6pt height 3 pt depth  -2.5pt
\kern -8pt \intop}\limits_{\kern -6pt#1}} {\mathop{\vrule width 5pt
height 3  pt depth -2.6pt \kern -6pt \intop}\nolimits_{#1}}
{\mathop{\vrule width 5pt height 3 pt depth -2.6pt \kern -6pt
\intop}\nolimits_{#1}} {\mathop{\vrule width 5pt height 3 pt depth
-2.6pt \kern -6pt \intop}\nolimits_{#1}}}
\newcommand{\un}{\underline}
\newcommand{\ve}{\varepsilon}
\newcommand{\R}{\mathbb{R}}
\newcommand{\N}{\mathbb{N}}
\newcommand{\Z}{\mathbb{Z}}
\renewcommand{\exp}[1]{{\rm exp}_{#1}}
\newcommand{\Hol}[1]{{\rm Hol}_{#1}}
\newcommand{\Sas}{d_{\rm Sas}}
\newcommand{\inj}{{\rm inj}}
\newcommand{\colvec}[2][.8]{%
  \scalebox{#1}{%
    \renewcommand{\arraystretch}{.8}%
    $\begin{bmatrix}#2\end{bmatrix}$%
  }
}
\begin{document}

\title[Symbolic dynamics for surface maps with discontinuities]{Symbolic dynamics for non-uniformly \\
hyperbolic surface maps with discontinuities \\ (Dynamique symbolique pour les syst\`emes \\ non-uniform\'ement hyperboliques avec discontinuit\'es)}

\author{Yuri Lima and Carlos Matheus}
\date{\today}
\keywords{Billiards, Markov partition, symbolic dynamics. \\ {\color{white}.}\hspace{0.2cm} \emph{Mots-cl\'es.} Billards, Partition de Markov, dynamique symbolique.}
\subjclass[2010]{37B10, 37D25, 37D50 (primary), 37C35 (secondary)}

\address{Laboratoire de Math\'ematiques d'Orsay, Universit\'e Paris-Sud\\ CNRS, Universit\'e
Paris-Saclay, $91405$ Orsay, France}
\email{yurilima@gmail.com}
\address{Universit\'e Paris 13\\ Sorbonne Paris Cit\'e\\ CNRS (UMR 7539), F-93430,
Villetaneuse, France}
\email{matheus.cmss@gmail.com}

\begin{abstract}
This work constructs symbolic dynamics for non-uniformly
hyperbolic surface maps with a set of discontinuities $\mathfs D$. We allow 
the derivative of points nearby $\mathfs D$ to be unbounded,
of the order of a negative power of the distance to $\mathfs D$.
Under natural geometrical assumptions on the underlying space $M$,
we code a set of non-uniformly hyperbolic orbits that do not converge
exponentially fast to $\mathfs D$. The results apply to non-uniformly hyperbolic
planar billiards, e.g. Bunimovich billiards.
\end{abstract}

\maketitle

\vspace{-0.9cm}

\selectlanguage{francais}
\begin{abstract} Nous construisons dynamique symbolique pour les applications non-uniform\'ement hyperboliques d'une surface ayant un ensemble de discontinuit\'es $\mathfs D$. La d\'eriv\'ee de l'application peut ne pas \^etre borne\'e, de l'ordre d'une puissance negative de la distance \`a $\mathfs D$. Sous certaines conditions g\'eom\'etriques naturelles sur l'espace des phases $M$, nous codifions un ensemble d'orbites non-uniform\'ement hyperboliques qui ne s'approchent pas exponentiellement vite de $\mathfs D$. Notre r\'esultat s'applique aux billards planaires non-uniform\'ement hyperboliques tels que les billards de Bunimovich.
\end{abstract}
\selectlanguage{english}

\tableofcontents

\section{Introduction}\label{Section-introduction}

Given a compact domain $T\subset\R^2$ with piecewise smooth boundary,
consider the straight line motion of a particle inside $T$, with specular reflections
in $\partial T$. Let $f:M\to M$ be the {\em billiard map}, where
$M=\partial T\times[-\tfrac{\pi}{2},\tfrac{\pi}{2}]$ with the convention that $(r,\theta)\in M$
represents $r=$ collision position at $\partial T$ and $\theta=$ angle of collision.
The map $f$ has a natural invariant Liouville measure $d\mu=\cos\theta drd\theta$.
Sina{\u\i} proved that dispersing billiards are uniformly hyperbolic systems with
discontinuities \cite{Sinai-billiards}, hence the Liouville measure is ergodic.

\medskip
For a while uniform hyperbolicity was the only mechanism to generate chaotic billiards,
until Bunimovich constructed examples of ergodic nowhere dispersing
billiards \cite{Bunimovich-close-to-scattering,Bunimovich-ergodic-properties,Bunimovich-Nowhere-dispersing}.
These billiards, known as {\em Bunimovich billiards}, are non-uniformly hyperbolic:
$\mu$--almost every point has one positive Lyapunov exponent and one negative Lyapunov exponent,
see \cite[Chapter 8]{Chernov-Markarian}.
In this paper we construct symbolic models for non-uniformly hyperbolic billiard maps
such as Bunimovich billiards. Assume that the billiard table $T$ satisfies
the conditions of \cite[Part V]{Katok-Strelcyn}, and
let $h$ be the Kolmogorov-Sina{\u\i} entropy of $\mu$.

\begin{theorem}\label{Thm-billiard}
If $\mu$ is ergodic and $h>0$ then there exists a topological Markov shift
$(\Sigma,\sigma)$ and a H\"older continuous map $\pi:\Sigma\to M$ s.t.:
\begin{enumerate}[$(1)$]
\item $\pi\circ \sigma=f\circ\pi$.
\item $\pi$ is surjective and finite-to-one on a set of full $\mu$--measure.
\end{enumerate}
\end{theorem}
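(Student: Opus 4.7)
The plan is to derive Theorem~\ref{Thm-billiard} from the general symbolic coding theorem for non-uniformly hyperbolic surface maps with discontinuities that is the main technical result of this paper. The first task is to verify that the billiard map fits the abstract setting. The phase space $M = \partial T \times [-\tfrac{\pi}{2},\tfrac{\pi}{2}]$ is a compact surface, and the discontinuity set $\mathfs D$ is a finite union of smooth curves: $\partial M = \partial T \times \{\pm \tfrac{\pi}{2}\}$, the pre-images of the corners of $\partial T$, the curves of grazing collisions, and a forward iterate of these. On $M\setminus\mathfs D$ the map $f$ is a local diffeomorphism, and classical billiard formulas give
\[
\|Df(x)\| \;\lesssim\; \frac{1}{\cos\theta(f(x))} \;\lesssim\; d(x,\mathfs D)^{-1},
\]
so the derivative blows up like a negative power of the distance to $\mathfs D$, precisely as allowed by the hypotheses. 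The Katok-Strelcyn assumption additionally guarantees the integrability of $\log\|Df^{\pm 1}\|$ and the full Pesin theory of local stable and unstable manifolds along $\mu$-typical orbits.

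The second task is to identify a full $\mu$-measure invariant set of orbits to which the coding will apply. Ergodicity together with $h>0$ yields, via the Ruelle inequality, a positive Lyapunov exponent $\mu$-almost everywhere; preservation of the smooth Liouville measure on a surface then forces a symmetric negative exponent, so $\mu$-a.e.\ point is Oseledets regular with non-zero exponents. For the slow recurrence to $\mathfs D$, the standard estimate $\mu\{x:d(x,\mathfs D)<\ve\}\lesssim\ve$ combined with a Borel-Cantelli argument applied to $\ve_n = e^{-\alpha n}$ and to every $\alpha>0$ gives
\[
\lim_{\abs{n}\to\infty} \frac{1}{\abs{n}}\log d(f^n x,\mathfs D) = 0
\]
for $\mu$-a.e.\ $x$, i.e.\ orbits do not approach $\mathfs D$ exponentially fast. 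The intersection of these two full-measure sets is the ``good set'' targeted by the general theorem.

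With the good set in hand, the general theorem produces the topological Markov shift $(\Sigma,\sigma)$ and the H\"older continuous map $\pi:\Sigma\to M$ satisfying $\pi\circ\sigma=f\circ\pi$, with surjectivity onto a set containing the good set and finite-to-oneness on it. Since the good set has full $\mu$-measure by the previous step, both conclusions $(1)$ and $(2)$ of Theorem~\ref{Thm-billiard} follow. The main difficulty is therefore concentrated entirely inside the general theorem. In the classical diffeomorphism framework (after Sarig), one exploits uniform continuity of $Df$ to build Pesin charts whose sizes depend only on the Oseledets data and then shadows $\ve$-chains through them. When $\|Df\|$ blows up as a negative power of $d(\cdot,\mathfs D)$, one must calibrate the Pesin chart sizes simultaneously to the Lyapunov exponents and to the distance to $\mathfs D$, verify that the subexponential approach rate proved in the previous paragraph is compatible with the subexponential shrinking of Pesin blocks, and rebuild the Bowen-Sarig shadowing lemma in this degenerate geometry. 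This is exactly where the ``natural geometrical assumptions on $M$'' referred to in the abstract must enter in an essential way.
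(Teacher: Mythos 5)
Your proposal follows the paper's route exactly: Theorem \ref{Thm-billiard} is obtained by checking (A1)--(A6) for the billiard map, invoking the Katok--Strelcyn estimate $\mu\{x:d(x,\mathfs D)<\ve\}\lesssim\ve$ for the Liouville measure, using ergodicity, $h>0$ and the Ruelle inequality (applied to both $f$ and $f^{-1}$) to get $\chi$--hyperbolicity for some fixed $\chi>0$, and then applying Theorem \ref{Thm-main}. One small calibration: the hypothesis of Theorem \ref{Thm-main} is $f$--adaptedness, i.e.\ $\int\log d(x,\mathfs D)\,d\mu>-\infty$, which is strictly stronger than the subexponential-approach property you extract by Borel--Cantelli, but the same measure estimate gives the integrability at once via $\int-\log d(x,\mathfs D)\,d\mu=\int_0^\infty\mu\{d(\cdot,\mathfs D)<e^{-t}\}\,dt<\infty$, so your argument closes with no extra work.
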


Other examples of non-uniformly hyperbolic billiard maps are
\cite{Wojtkowski-principles,Bunimovich-lemons}. See section \ref{Section-preliminaries} for
the definition of topological Markov shifts.

\begin{corollary}\label{corollary-periodic}
Under the above assumptions, $\exists C>0$ and $p\geq 1$ s.t. $f$ has at least
$Ce^{hnp}$ periodic points of period $np$ for all $n\geq 1$. 
\end{corollary}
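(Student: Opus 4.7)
The plan is to transfer the counting of periodic orbits to the symbolic model $(\Sigma,\sigma)$ and then push the count back to $M$ via $\pi$.

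First, using item $(2)$ of Theorem \ref{Thm-billiard}, I would measurably lift $\mu$ to a $\sigma$-invariant probability measure on $\Sigma$ of the same entropy $h$. Hence the Gurevich entropy of $(\Sigma,\sigma)$ is at least $h$, and some topologically transitive component $\Sigma'\subset\Sigma$ satisfies $h_G(\sigma|_{\Sigma'})\geq h$.

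Second, I would apply standard counting of loops in irreducible topological Markov shifts. Fix a state $a\in\Sigma'$ whose natural period is $p$, and let $Z_k(a)$ denote the number of periodic points of period $k$ through $a$. The concatenation-of-loops inequality $Z_{(n+m)p}(a)\geq Z_{np}(a)Z_{mp}(a)$, combined with Gurevich's theorem $\tfrac{1}{np}\log Z_{np}(a)\to h_G(\sigma|_{\Sigma'})\geq h$, yields, possibly after replacing $p$ by a suitable multiple of itself, a constant $C_1>0$ with
\[
Z_{np}(a)\geq C_1\, e^{hnp}, \quad \text{for all } n\geq 1.
\]
These points are pairwise distinct in $\Sigma$.

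Third, I would push these periodic points forward by $\pi$. Via $\pi\circ\sigma=f\circ\pi$, each $\sigma^{np}$-fixed point in $\Sigma'$ yields a periodic point of $f$ of period $np$. The conclusion would follow once the multiplicity of the map $\pi$ restricted to the periodic points produced in the previous step is bounded by a uniform constant $M^*$; the corollary then holds with $C=C_1/M^*$.

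The main obstacle is precisely this multiplicity bound, because the finite-to-one property of Theorem \ref{Thm-billiard} is asserted on a set of full $\mu$-measure, while individual periodic orbits are not supported by any continuous measure. I would resolve this by revisiting the construction of $\pi$ in the paper: the hypothesis that coded orbits stay quantitatively away from $\mathfs D$ (they do not approach it exponentially fast) forces uniform hyperbolicity estimates along the corresponding periodic orbits, and a standard inverse-theorem-type argument in the Sarig--Ben-Ovadia framework shows that any two distinct symbolic preimages of a single $f$-periodic orbit differ only by a shift and by bounded combinatorial data, yielding the required uniform multiplicity bound.
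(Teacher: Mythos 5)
Your proposal is correct and follows essentially the same route the paper intends: the paper proves this corollary only by citing Gurevi{\v{c}}'s theorems and the argument of \cite[Thm.~1.1]{Sarig-JAMS}, which is exactly your lift--count--project scheme, and the multiplicity bound you single out as the main obstacle is supplied explicitly by Theorem \ref{Thm-widehat-pi}(4) together with the remark following it (any point with a symbolic preimage returning infinitely often to the state $R$ in the future and $S$ in the past has at most $N(R)N(S)$ preimages in $\widehat\Sigma^\#$, hence at most $N(a)^2$ for your loop-based periodic points). The one step stated a bit too quickly is the lower bound $Z_{np}(a)\geq C_1e^{hnp}$: supermultiplicativity plus Gurevi{\v{c}}'s limit theorem only gives $Z_{np}(a)\leq e^{nph_G}$ with the rate approached from below, so one either uses $h<h_G(\Sigma')$ (where a concatenation argument at a slightly larger loop length works) or, in the boundary case $h=h_G(\Sigma')$, invokes Gurevi{\v{c}}'s measure-of-maximal-entropy/positive-recurrence theorem to get $Z_{np}(a)\asymp e^{nph}$ --- which is precisely why the paper cites both of his papers.
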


Corollary \ref{corollary-periodic} is consequence of Theorem \ref{Thm-billiard}
and the work of Gurevi{\v{c}} \cite{Gurevich-Topological-Entropy,Gurevich-Measures-Of-Maximal-Entropy},
as in \cite[Thm. 1.1]{Sarig-JAMS}. It is related to an estimate of Chernov \cite{Chernov-91}.
The integer $p$ is the period of $(\Sigma,\sigma)$, hence $p=1$ iff $(\Sigma,\sigma)$
is topologically mixing. Since $\mu$ is mixing, we expect that the symbolic
coding of Theorem \ref{Thm-billiard}
can be improved to give a topologically mixing $(\Sigma,\sigma)$.
Theorem \ref{Thm-billiard} is consequence of the main result
of this paper, Theorem \ref{Thm-main}, and of an argument of Katok and Strelcyn
\cite[Section I.3]{Katok-Strelcyn}.
The statement of Theorem \ref{Thm-main} is technical, so we first introduce some notation. 

\medskip
Let $M$ be a smooth Riemannian surface with finite diameter, possibly with boundary.
We assume that the diameter of $M$ is smaller than one\footnote{Just multiply the
metric by a sufficiently small constant.}.
Let $\mathfs D^+,\mathfs D^-$ be closed
subsets of $M$.
Fix $f:M\backslash\mathfs D^+\to M$ a diffeomorphism
onto its image, s.t. $f$ has an inverse $f^{-1}:M\backslash\mathfs D^-\to M$ that
is a diffeomorphism onto its image.

\medskip
\noindent
{\sc Set of discontinuities $\mathfs D$:} The {\em set of discontinuities of $f$} is
$\mathfs D:=\mathfs D^+\cup \mathfs D^-$. 

\medskip
If $x\not\in\bigcup_{n\in\Z}f^n(\mathfs D)$ then $f^n(x)$ is well-defined for all $n\in\Z$,
and for every $y=f^n(x)$ there is a neighborhood $U\ni y$ s.t. $f\restriction_U,f^{-1}\restriction_{U}$
are diffeomorphisms onto their images. We require some regularity conditions on $M,f$.
The first four assumptions are on the geometry of $M$.
Given $x\in M\backslash\mathfs D$, let $\inj(x)$ denote the {injectivity radius} of $M$ at $x$,
and let $\exp{x}$ be the {\em exponential map} at $x$, wherever it can be defined.
Given $r>0$, let $B_x[r]\subset T_xM$ be the ball with center 0 and radius $r$.
The Riemannian metric on $M$ induces a Riemannian metric on $TM$, called the {\em Sasaki
metric}, see e.g. \cite[\S2]{Burns-Masur-Wilkinson}. 
Denote the Sasaki metric by $\Sas(\cdot,\cdot)$.
Similarly, we denote the Sasaki metric on $TB_x[r]$ by the same notation, and the context
will be clear in which space we are. For nearby small vectors, the Sasaki metric is
almost a product metric in the following sense. Given a geodesic $\gamma$ joining $y$ to $x$,
let $P_\gamma:T_yM\to T_xM$ be the parallel transport along $\gamma$.
If $v\in T_xM$, $w\in T_yM$ then
$\Sas(v,w)\asymp d(x,y)+\|v-P_\gamma w\|$ as $\Sas(v,w)\to 0$, see e.g.
\cite[Appendix A]{Burns-Masur-Wilkinson}. The rate of convergence depends on the
curvature tensor of the metric on $M$. Here are the first two assumptions on $M$.

\medskip
\noindent
{\sc Regularity of $\exp{x}$:} $\exists a>1$ s.t. for all
$x\in M\backslash\mathfs D$ there is $d(x,\mathfs D)^a<\mathfrak r(x)<1$ 
s.t. for $D_x:=B(x,2\mathfrak r(x))$ the following holds:
\begin{enumerate}[ii]
\item[(A1)] If $y\in D_x$ then $\inj(y)\geq 2\mathfrak r(x)$, $\exp{y}^{-1}:D_x\to T_yM$
is a diffeomorphism onto its image, and
$\tfrac{1}{2}(d(x,y)+\|v-P_{y,x}w\|)\leq \Sas(v,w)\leq 2(d(x,y)+\|v-P_{y,x} w\|)$ for all $y\in D_x$ and
$v\in T_xM,w\in T_yM$ s.t. $\|v\|,\|w\|\leq 2\mathfrak r(x)$, where 	
$P_{y,x}:=P_\gamma$ is the radial geodesic $\gamma$ joining $y$ to $x$.
\item[(A2)] If $y_1,y_2\in D_x$ then
$d(\exp{y_1}v_1,\exp{y_2}v_2)\leq 2\Sas(v_1,v_2)$ for $\|v_1\|$, $\|v_2\|\leq 2\mathfrak r(x)$,
and $\Sas(\exp{y_1}^{-1}z_1,\exp{y_2}^{-1}z_2)\leq 2[d(y_1,y_2)+d(z_1,z_2)]$
for $z_1,z_2\in D_x$ whenever the expression makes sense.
In particular $\|d(\exp{x})_v\|\leq 2$ for $\|v\|\leq 2\mathfrak r(x)$,
and $\|d(\exp{x}^{-1})_y\|\leq 2$ for $y\in D_x$.
\end{enumerate}

\medskip
The next two assumptions are on the regularity of $d\exp{x}$.
For $x,x'\in\ M\backslash\mathfs D$, let $\mathfs L _{x,x'}:=\{A:T_xM\to T_{x'}M:A\text{ is linear}\}$
and $\mathfs L _x:=\mathfs L_{x,x}$. 
Then the parallel transport $P_{y,x}$ considered in (A1) is in $\mathfs L_{y,x}$.
Given $y\in D_x,z\in D_{x'}$ and $A\in \mathfs L_{y,z}$,
let $\widetilde{A}\in\mathfs L_{x,x'}$, $\widetilde{A}:=P_{z,x'} \circ A\circ P_{x,y}$.
By definition, $\widetilde{A}$ depends on $x,x'$ but different basepoints define
a map that differs from $\widetilde{A}$ by pre and post composition with isometries.
In particular, $\|\widetilde{A}\|$ does not depend on the choice of $x,x'$.
Similarly, if $A_i\in\mathfs L_{y_i,z_i}$ then $\|\widetilde{A_1}-\widetilde{A_2}\|$ does
not depend on the choice of $x,x'$.
Define the map $\tau=\tau_x:D_x\times D_x\to \mathfs L_x$
by $\tau(y,z)=\widetilde{d(\exp{y}^{-1})_z}$, where we use the identification
$T_v(T_{y}M)\cong T_{y}M$ for all $v\in T_yM$.


\medskip
\noindent
{\sc Regularity of $d\exp{x}$:}
\begin{enumerate}[ii]
\item[(A3)] If $y_1,y_2\in D_x$ then
$
\|\widetilde{d(\exp{y_1})_{v_1}}-\widetilde{d(\exp{y_2})_{v_2}}\|
\leq d(x,\mathfs D)^{-a}\Sas(v_1,v_2)
$
for all $\|v_1\|,\|v_2\|\leq 2\mathfrak r(x)$, and 
$\|\tau(y_1,z_1)-\tau(y_2,z_2)\|\leq d(x,\mathfs D)^{-a}[d(y_1,y_2)+d(z_1,z_2)]$
for all $z_1,z_2\in D_x$.
\item[(A4)] If $y_1,y_2\in D_x$ then the map $\tau(y_1,\cdot)-\tau(y_2,\cdot):D_x\to \mathfs L_x$
has Lipschitz constant $\leq d(x,\mathfs D)^{-a}d(y_1,y_2)$.
\end{enumerate}

\medskip
Conditions (A1)--(A2) guarantee that the exponential maps and their inverses
are well-defined and have uniformly bounded Lipschitz constants in balls
of radii $d(x,\mathfs D)^a$.
Condition (A3) controls the Lipschitz constants of the derivatives of these maps,
and condition (A4) controls the Lipschitz constants of their second derivatives.
Here are some case when (A1)--(A4) are satisfied, in increasing order of generality:
\begin{enumerate}[$\circ$]
\item The curvature tensor $R$ of $M$ is globally bounded, e.g. when $M$ is the
phase space of a billiard map.
\item $R,\nabla R,\nabla^2 R,\nabla^3R$ grow at most polynomially
fast with respect to the distance to $\mathfs D$, e.g. when $M$ is a moduli space
of curves equipped with the Weil-Petersson metric \cite{Burns-Masur-Wilkinson}.
\end{enumerate}
Now we discuss the assumptions on $f$.

\medskip
\noindent
{\sc Regularity of $f$:} There are constants $0<\beta<1<b$ s.t. for all  $x\in M\backslash\mathfs D$:
\begin{enumerate}[ii]
\item[(A5)] If $y\in D_x$ then $\|df_y^{\pm 1}\|\leq d(x,\mathfs D)^{-b}$.
\item[(A6)] If $y_1,y_2\in D_x$ and $f(y_1),f(y_2)\in D_{x'}$ then
$\|\widetilde{df_{y_1}}-\widetilde{df_{y_2}}\|\leq \mathfrak Kd(y_1,y_2)^\beta$,
and if $y_1,y_2\in D_x$ and $f^{-1}(y_1),f^{-1}(y_2)\in D_{x''}$ then
$\|\widetilde{df_{y_1}^{-1}}-\widetilde{df_{y_2}^{-1}}\|\leq \mathfrak Kd(y_1,y_2)^\beta$.
\end{enumerate}

\medskip
Although technical, conditions (A5)--(A6) hold in most cases of interest, e.g.
if $\|df^{\pm 1}\|,\|d^2f^{\pm 1}\|$ grow at most polynomially fast with respect to
the distance to $\mathfs D$. 
We finally define the measures we code. Fix $\chi>0$.

\medskip
\noindent
{\sc $\chi$--hyperbolic measure:} An $f$--invariant probability measure on $M$ is called
{\em $\chi$--hyperbolic} if $\mu$--a.e. $x\in M$ has one Lyapunov exponent $>\chi$
and another $<-\chi$.

\medskip
\noindent
{\sc $f$--adapted measure:} An $f$--invariant measure on $M$ is called {\em $f$--adapted}
if
$$\int_M \log d(x,\mathfs D)d\mu(x)>-\infty.$$
A fortiori $\mu(\mathfs D)=0$.


\begin{theorem}\label{Thm-main}
Let $M,f$ satisfy conditions {\rm (A1)--(A6)}. For all $\chi>0$,
there exists a topological Markov shift $(\Sigma,\sigma)$
and a H\"older continuous map $\pi:\Sigma\to M$ s.t.:
\begin{enumerate}[$(1)$]
\item $\pi\circ \sigma=f\circ\pi$.
\item  $\pi[\Sigma^\#]$ has full $\mu$--measure for every $f$--adapted $\chi$--hyperbolic measure $\mu$.
\item For all $x\in \pi[\Sigma^\#]$, $\#\{\un v\in\Sigma^\#:\pi(\un v)=x\}<\infty$.
\end{enumerate}
\end{theorem}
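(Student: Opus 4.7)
The plan is to adapt Sarig's construction of countable Markov partitions for $C^{1+\alpha}$ surface diffeomorphisms to the present setting, with discontinuities and unbounded derivatives handled by forcing the size of the charts to depend on the distance to $\mathfs D$. First I would identify a set $\mathrm{NUH}^\#\subset M$ of \emph{typical} orbits: those $x$ with well-defined Lyapunov exponents outside $[-\chi,\chi]$, with $f^n(x)\notin\mathfs D$ for all $n\in\Z$, and with recurrence to $\mathfs D$ no faster than polynomial in both directions. The $f$--adapted hypothesis, together with the Birkhoff theorem applied to $\log d(\cdot,\mathfs D)$, guarantees that for every $f$--adapted $\chi$--hyperbolic $\mu$ we have $\mu(\mathrm{NUH}^\#)=1$; concretely, one shows $\tfrac1n\log d(f^n(x),\mathfs D)\to 0$ $\mu$--a.e., so that the ``polynomial loss'' in assumptions (A3), (A5), (A6) can be absorbed by the exponential growth/contraction coming from hyperbolicity.

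Next I would build \emph{Pesin charts} $\Psi_x:[-Q(x),Q(x)]^2\to M$ along each $x\in\mathrm{NUH}^\#$ by composing the exponential map with the Lyapunov change of coordinates that diagonalizes $df$ up to an arbitrarily small error $\varepsilon$. The new feature here, compared to the smooth setting, is that the chart size $Q(x)$ must be taken of the form
\[
Q(x)=\varepsilon^{3/\beta}\min\bigl\{\|C(x)\|^{-12/\beta},\,d(x,\mathfs D)^{N}\bigr\}
\]
for a suitably large $N=N(a,b,\beta)$, so that (A1)--(A4) guarantee the chart is well defined and (A5)--(A6) turn the image of $\Psi_{f(x)}^{-1}\circ f\circ\Psi_x$ into a small Lipschitz perturbation of a hyperbolic linear map on $[-Q(x),Q(x)]^2$. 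The ``tempered Lyapunov norm'' $\|C(x)\|$ grows at most subexponentially along $\mathrm{NUH}^\#$; the $f$--adapted condition ensures the same for $d(f^n(x),\mathfs D)^{-N}$, hence $Q(f^n(x))/Q(x)\to 0$ only subexponentially.

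I would then introduce a countable collection of ``$\varepsilon$--double charts'' $\Psi_x^{p^s,p^u}$ with $0<p^s,p^u\leq Q(x)$ in a discrete set $I\subset\{e^{-k\varepsilon/3}:k\in\N\}$, and call a bi-infinite sequence of double charts \emph{admissible} (a GPO) when consecutive charts overlap with compatible stable/unstable bounds: this defines a countable topological Markov shift $(\Sigma,\sigma)$. The heart of the construction is the \emph{shadowing lemma}: every admissible sequence $\un v=(v_n)_{n\in\Z}$ determines a unique orbit $\pi(\un v)=\bigcap_n f^{-n}(\Psi_{v_n}([-p_n^s,p_n^s]\times[-p_n^u,p_n^u]))$, by proving that stable/unstable manifolds of such pseudo-orbits are graphs with exponentially decaying slopes. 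The inverse procedure — \emph{every} $x\in\mathrm{NUH}^\#$ is shadowed by some GPO — follows by a coarse-graining argument that chooses $p_n^s,p_n^u$ as the largest element of $I$ dominated by the true stable/unstable Pesin radii along the orbit; this is where the slow recurrence to $\mathfs D$ is crucial, otherwise no admissible element of $I$ exists at the steps when $f^n(x)$ is close to $\mathfs D$.

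Finally I would obtain properties (1)--(3) as follows. The equivariance (1) is tautological from $\pi$'s definition. The full-measure statement (2) reduces, via the shadowing of NUH orbits above, to $\mu(\mathrm{NUH}^\#)=1$, which was step one; Hölder regularity of $\pi$ comes from the exponential decay of the diameters of the cylinders, controlled by the admissibility condition. The finite-to-one property (3) on the recurrent part $\Sigma^\#$ is then extracted by a Sinai-Bowen refinement of the cover $\{\pi(\Sigma_v^\#)\}_v$ into a countable Markov partition $\mathfs R$; the inverse-Lipschitz estimates on stable/unstable manifolds yield an \emph{affability} statement ensuring that each $x\in\pi[\Sigma^\#]$ is contained in only finitely many $R\in\mathfs R$, which by a standard argument translates into bounded multiplicity of $\pi$. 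The main obstacle throughout is the delicate interplay between hyperbolicity and proximity to $\mathfs D$: one must choose the exponents in the definition of $Q(x)$ and in the admissibility thresholds carefully so that the polynomial losses in (A3)--(A6) are always dominated by the exponential gains from the charts, and so that the resulting alphabet is still rich enough to encode almost every orbit.
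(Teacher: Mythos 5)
Your proposal follows essentially the same route as the paper: Oseledets--Pesin reduction, Pesin charts with radius $Q_\ve(x)$ shrunk by a power of $d(x,\mathfs D)$, $\ve$--double charts and GPOs, coarse graining to a countable alphabet, shadowing, an inverse theorem yielding local finiteness of the Markov cover, and a Sina{\u\i}--Bowen refinement for finite multiplicity. The only cosmetic deviation is your form of $Q(x)$; the paper's $\widetilde Q_\ve$ additionally involves $\|C_\chi(f(x))^{-1}\|$ and $\rho(x)=d(\{f^{-1}(x),x,f(x)\},\mathfs D)$ rather than just $d(x,\mathfs D)$, which is needed to control compositions like $\Psi_{f(x)}^{-1}\circ f\circ\Psi_x$ and $f^{-1}$ symmetrically, but this is a technical adjustment fully within the spirit of your ``choose $N(a,b,\beta)$ large'' remark.
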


Above, $\Sigma^\#$ is the {\em recurrent set} of $\Sigma$, see section \ref{Section-preliminaries}.
Every $\sigma$--invariant measure $\widehat{\mu}$ is carried by $\Sigma^\#$,
hence its projection $\mu=\widehat{\mu}\circ\pi^{-1}$ has the same entropy
as $\widehat{\mu}$ (this follows from the Abramov-Rokhlin formula \cite{Abramov-Rokhlin}).
In particular, the topological entropy of $(\Sigma,\sigma)$ is at most that of $(M,f)$.
On the other direction, every $f$--adapted $\chi$--hyperbolic measure $\mu$ has
a lift $\widehat{\mu}$ with the same entropy. If we know that $\chi$--hyperbolic measures are
$f$--adapted then the topological entropies of $(\Sigma,\sigma)$ and $(M,f)$ coincide,
and their measures of maximal entropy are related. In this case, Corollary \ref{corollary-periodic}
has a potentially stronger statement: for every $\ve>0$, $\exists C>0$ and $p\geq 1$ s.t. $f$ has at
least $Ce^{(H-\ve)np}$ periodic points of period $np$ for all $n\geq 1$,
where $H$ is the topological entropy of $\Sigma$.
At the moment, we are not aware of general results assuring that $\chi$--hyperbolic measures
are $f$--adapted, except when the measure is Liouville \cite[Section I.3]{Katok-Strelcyn}.

\medskip
We now discuss the applicability of Theorem \ref{Thm-billiard}. Let us restrict ourselves to
billiard tables with finitely many boundary components, otherwise many degeneracies
can occur (see e.g. \cite[Part V]{Katok-Strelcyn}). Assumptions (A1)--(A6) are satisfied
if all boundary components are $C^3$. The precise conditions that guarantee 
non-uniform hyperbolicity are unknown, so we mention two classes of billiard tables $T$
whose billiard maps are non-uniformly hyperbolic:
\begin{enumerate}[$\circ$]
\item Sina{\u\i} billiard: every component of $\partial T$ is dispersing.
In this case, the billiard map exhibits uniform hyperbolicity.
\item Bunimovich billiard: $\partial T$ is the union of finitely many segments
and arcs of circles s.t. each of these arcs belongs to a disc contained in $T$.
When this happens, non-uniform hyperbolicity is ensured via a focusing-defocusing mechanism,
see \cite[Chapter 8]{Chernov-Markarian}. See Figure \ref{figure-billiards} for some examples.
\end{enumerate}

\begin{figure}[hbt!]
\centering
\def\svgwidth{12cm}
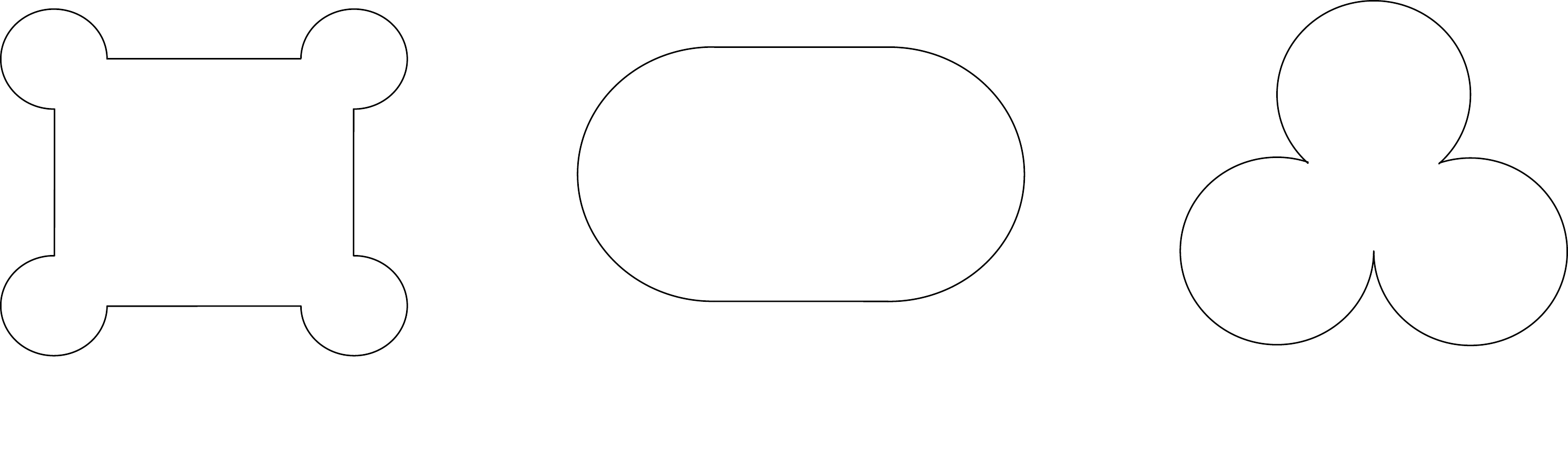\label{figure-billiards}\caption{Examples of Bunimovich billiards:
(a) pool table with pockets, (b) stadium, (c) flower.}
\end{figure}

\subsection{Related literature}

The construction of Markov partitions and symbolic dynamics for uniformly hyperbolic
diffeomorphisms and flows in compact manifolds laid its foundation during the late sixties and
early seventies through the works of Adler \& Weiss
\cite{Adler-Weiss-PNAS,Adler-Weiss-Similarity-Toral-Automorphisms},
Sina{\u\i} \cite{Sinai-Construction-of-MP,Sinai-MP-U-diffeomorphisms},
Bowen \cite{Bowen-MP-Axiom-A,Bowen-Symbolic-Flows}, and
Ratner \cite{Ratner-MP-three-dimensions,Ratner-MP-n-dimensions}.
Below we discuss other contexts.


\medskip
\noindent
{\sc Billiards:} These are the main examples of maps with discontinuities.
Katok and Strelcyn constructed invariant manifolds for non-uniformly hyperbolic
billiard maps which include Bunimovich billiards \cite{Katok-Strelcyn}.
Bunimovich, Chernov and Sina{\u\i} constructed countable Markov partitions for two-dimensional
dispersing billiard maps \cite{Bunimovich-Chernov-Sinai}.
All these results are for Liouville measures. Up to our knowledge,
our result is the first symbolic coding of uniformly and non-uniformly hyperbolic
billiard maps for general measures.

\medskip
\noindent
{\sc Tower extensions of billiard maps:} Young constructed tower extensions
for certain two-dimensional dispersing billiard maps \cite{Young-towers}.
Contrary to our case, Young's tower extensions provide codings which are usually infinite-to-one,
hence it is unclear that $\chi$--hyperbolic measures can be lifted to the
symbolic space without increasing its entropy. Nevertheless, such tower extensions
guarantee exponential decay of correlations for certain two-dimensional dispersing billiard maps.	  

\medskip
\noindent
{\sc Non-uniformly hyperbolic three-dimensional flows:} The first author and Sarig
constructed symbolic models for non-uniformly hyperbolic three-dimensional flows with positive
speed \cite{Lima-Sarig}. The idea is to take a Poincar\'e section and analyze the Poincar\'e return map $f$.
The Poincar\'e map $f$ has discontinuities, but its derivative is uniformly bounded inside the set
of continuities. Hence the methods of \cite{Sarig-JAMS} apply more easily.

\medskip
\noindent
{\sc Weil-Petersson flow:} Moduli spaces of curves possess natural negatively
curved incomplete K\"ahler metrics, called {\em Weil-Petersson metrics}. The geodesic
flow of one such metric is called the {\em Weil-Petersson flow}, and it preserves a canonical
Liouville measure.
The properties of the Weil-Petersson metric are intimately related to the hyperbolic
geometry of surfaces, and this partly explains the recent interest in the dynamics
of the Weil-Petersson flow. Burns, Masur and Wilkinson proved that the Liouville
measure is hyperbolic \cite{Burns-Masur-Wilkinson}.
For that, they combined results of Wolpert and McMullen to show that the
Weil-Petersson metric explodes at most polynomially fast while approaching
the boundary of the Deligne-Mumford compactification of the moduli space of curves,
hence the Weil-Petersson flow satisfies the assumptions of Katok and Strelcyn \cite{Katok-Strelcyn}.
The construction of symbolic dynamics for the Weil-Petersson flow is still open.

\medskip
As pointed out by Sarig \cite[pp. 346]{Sarig-JAMS}, our main result (Theorem \ref{Thm-main})
can be regarded as a step towards the construction of Markov partitions capturing
measures of maximal entropy for surface maps with discontinuities with positive topological
entropy, such as Bunimovich billiards.
Motivated by this, we ask the following question.

\smallskip

\noindent
{\sc Question:} Let $f$ be a billiard map with topological entropy $H>0$. Does $f$ have a measure
of maximal entropy? If it does, is it $f$--adapted? Is it Bernoulli? 

\medskip
A positive answer to this question would imply that $\exists C>0$ s.t. $f$ has at least 
$Ce^{Hn}$ periodic points of period $n$, for all $n\geq 1$.

\medskip
In \cite[pp. 858]{Burns-Masur-Wilkinson} it was suggested
that one of the assumptions (in their notation, the compactness of $\overline{N}$)
can be relaxed to the assumption that $N$ has finite diameter.
The main reason not to claim this is that they use \cite{Katok-Strelcyn}, whose framework
assumes $\overline N$ to be compact. We only assume finite diameter, hence
our work is a step towards the relaxation of the assumptions of \cite{Katok-Strelcyn}
to the context mentioned in \cite{Burns-Masur-Wilkinson}.

\subsection{Methodology}

The proof of Theorem \ref{Thm-main} is based on \cite{Sarig-JAMS} and \cite{Lima-Sarig},
and it follows the steps below:
\begin{enumerate}[(1)]
\item If $\mu$ is $f$--adapted and $\chi$--hyperbolic, then $\mu$--a.e. $x\in M$
has a Pesin chart $\Psi_x:[-Q_\ve(x),Q_\ve(x)]^2\to M$ s.t.
$\lim_{n\to\infty}\tfrac{1}{n}\log Q_\ve(f^n(x))=0$.
\item Define $\ve$--double charts $\Psi_x^{p^s,p^u}$, the two-sided versions of Pesin charts
that control separately the local forward and local backward hyperbolicity at $x$.
\item Construct a countable collection $\mathfs A$ of $\ve$--double charts that are dense
in the space of all $\ve$--double charts. The notion of denseness is defined in terms of
finitely many parameters of $x$.
\item Define the transition between $\ve$--double charts s.t. $p^s,p^u$
are as maximal as possible. This is important to establish the inverse theorem (Theorem \ref{Thm-inverse}). 
\item Apply a Bowen-Sina{\u\i} refinement (following \cite{Bowen-LNM}).
The resulting partition defines a topological Markov shift $(\Sigma,\sigma)$ and a map
$\pi:\Sigma\to M$ satisfying Theorem \ref{Thm-main}.
\end{enumerate}

\medskip
Contrary to \cite{Sarig-JAMS,Lima-Sarig}, we do not require $M$ to be compact (not even to have bounded curvature)
neither $f$ to have uniformly bounded $C^{1+\beta}$ norm. As a consequence,
we have to control the parameters appearing in the construction more carefully.
In the methodology of proof above,
this is reflected in steps (1), (3), (4). Steps (2) and (5) work almost verbatim as in \cite{Sarig-JAMS}.

\subsection{Preliminaries}\label{Section-preliminaries}

Let $\mathfs G=(V,E)$ be an oriented graph, where $V=$ vertex set and $E=$ edge set.
We denote edges by $v\to w$, and we assume that $V$ is countable.

\medskip
\noindent
{\sc Topological Markov shift (TMS):} A {\em topological Markov shift} (TMS) is a pair $(\Sigma,\sigma)$
where
$$
\Sigma:=\{\text{$\Z$--indexed paths on $\mathfs G$}\}=
\left\{\un{v}=\{v_n\}_{n\in\Z}\in V^{\Z}:v_n\to v_{n+1}, \forall n\in\Z\right\}
$$
and $\sigma:\Sigma\to\Sigma$ is the left shift, $[\sigma(\un v)]_n=v_{n+1}$. 
The {\em recurrent set} of $\Sigma$ is
$$
\Sigma^\#:=\left\{\un v\in\Sigma:\exists v,w\in V\text{ s.t. }\begin{array}{l}v_n=v\text{ for infinitely many }n>0\\
v_n=w\text{ for infinitely many }n<0
\end{array}\right\}.
$$
We endow $\Sigma$ with the distance $d(\un v,\un w):={\rm exp}[-\min\{|n|\in\Z:v_n\neq w_n\}]$.

\medskip
Write $a=e^{\pm\ve}b$ when $e^{-\ve}\leq \frac{a}{b}\leq e^\ve$,
and $a=\pm b$ when $-|b|\leq a\leq |b|$. Given an open set $U\subset \R^n$ and $h:U\to \R^m$,
let $\|h\|_0:=\sup_{x\in U}\|h(x)\|$ denote the $C^0$ norm of $h$. For $0<\beta<1$,
let $\Hol{\beta}(h):=\sup\frac{\|h(x)-h(y)\|}{\|x-y\|^\beta}$ 
where the supremum ranges over distinct elements $x,y\in U$.
If $h$ is differentiable, let
$\|h\|_1:=\|h\|_0+\|dh\|_0$ denote its $C^1$ norm, and
$\|h\|_{1+\beta}:=\|h\|_{C^1}+\Hol{\beta}(dh)$ its $C^{1+\beta}$ norm.
Given $x\in M$, remember that $B_x[r]\subset T_xM$ is the ball with center $0\in T_xM$ and radius $r$.
Also define $R[r]:=[-r,r]^2\subset\R^2$.

\medskip
The diameter of $M$ is less than one, hence we can assume that $a=b$: just change
$a,b$ to $\max\{a,b\}$. For symmetry and simplification purposes,
we will sometimes use (A3)--(A5) in the weaker forms below.
Define $\rho(x):=d(\{f^{-1}(x),x,f(x)\},\mathfs D)$, then (A3)--(A5) imply
that for all $x\in M\backslash\mathfs D$:
\begin{enumerate}[ii]
\item[(A3)'] If $y_1,y_2\in D_x$ then
$
\|\widetilde{d(\exp{y_1})_{v_1}}-\widetilde{d(\exp{y_2})_{v_2}}\|
\leq \rho(x)^{-a}\Sas(v_1,v_2)
$
for all $\|v_1\|,\|v_2\|\leq 2\mathfrak r(x)$,
and 
$\|\tau(y_1,z_1)-\tau(y_2,z_2)\|\leq d(x,\mathfs D)^{-a}[d(y_1,y_2)+d(z_1,z_2)]$
for all $z_1,z_2\in D_x$.
\item[(A4)'] If $y_1,y_2\in D_x$ then the map $\tau(y_1,\cdot)-\tau(y_2,\cdot):D_x\to \mathfs L_x$
has Lipschitz constant $\leq \rho(x)^{-a}d(y_1,y_2)$.
\item[(A5)'] If $y\in D_x$ then $\|df_y^{\pm 1}\|\leq \rho(x)^{-a}$.
\end{enumerate}
Here is a consequence of (A5) and the inverse theorem, written in symmetric form:
\begin{enumerate}[(A7)]
\item[(A7)] $\|df^{\pm 1}_x\|\geq m(df^{\pm 1}_x)\geq \rho(x)^a$.
\end{enumerate}
Above, $m(A):=\|A^{-1}\|^{-1}$. For the ease of reference, we collect (A1)--(A7) in Appendix A
in the format we will use in the text.

\medskip
We note that $\mu$ is $f$--adapted iff $\int \log\rho(x)d\mu>-\infty$.
If $\mu$ is $f$--adapted then by $\mu$--invariance the functions
$-\log d(f^{-1}(x),\mathfs D),-\log d(x,\mathfs D),-\log d(f(x),\mathfs D)$ are in $L^1(\mu)$,
hence is also their maximum $-\log\rho(x)$. The reverse implication is proved similarly.


\section{Linear Pesin theory}

In this section we construct changes of coordinates that make $df$ a hyperbolic matrix.
Since we are dealing with the action of the derivative only, the closeness of $x$ to $\mathfs D$
is irrelevant.

\medskip
Fix $\chi>0$, and let ${\rm NUH}_\chi$ be the set
of $x\in M\backslash \bigcup_{n\in\Z}f^n(\mathfs D)$ for which
there are vectors $\{e^s_{f^n(x)}\}_{n\in\Z}$, $\{e^u_{f^n(x)}\}_{n\in\Z}$ s.t. for every
$y=f^n(x)$, $n\in\Z$, it holds:
\begin{enumerate}[(1)]
\item $e^{s/u}_y\in T_yM$, $\|e^{s/u}_y\|=1$.
\item ${\rm span}(df^m_ye^{s/u}_y)={\rm span}(e^{s/u}_{f^m(y)})$ for all $m\in\Z$.
\item $\lim_{m\to\pm\infty}\tfrac{1}{m}\log\|df^m_y e^s_y\|<-\chi$ and
$\lim_{m\to\pm\infty}\tfrac{1}{m}\log\|df^m_y e^u_y\|>\chi$.
\item $\lim_{m\to\pm\infty}\tfrac{1}{m}\log|\sin\alpha(f^m(y))|=0$, where
$\alpha(f^m(y))=\angle(e^s_{f^m(y)},e^u_{f^m(y)})$.
\end{enumerate}

\subsection{Oseledets-Pesin reduction}

We represent $df_x$ as a hyperbolic matrix.

\medskip
\noindent
{\sc Parameters $s(x),u(x)$:} For $x\in{\rm NUH}_\chi$, define
$$
s(x):=\sqrt{2}\left(\sum_{n\geq 0}e^{2n\chi}\|df^n_xe^s_x\|^2\right)^{1/2} \text{ and }
u(x):=\sqrt{2}\left(\sum_{n\geq 0}e^{2n\chi}\|df^{-n}_xe^u_x\|^2\right)^{1/2}.
$$

\medskip
These numbers are well-defined because $x\in{\rm NUH}_\chi$, and $s(x),u(x)\geq \sqrt{2}$.
Let $e_1=(1,0),e_2=(0,1)$ be the canonical basis of $\R^2$.

\medskip
\noindent
{\sc Linear map $C_\chi(x):$} For $x\in{\rm NUH}_\chi$, let
$C_\chi(x):\R^2\to T_xM$ be the linear map s.t.
$$
C_\chi(x):e_1\mapsto \frac{e^s_x}{s(x)}\ ,\ C_\chi(x): e_2\mapsto \frac{e^u_x}{u(x)}\cdot
$$

\medskip
Given a linear transformation, let $\|\cdot\|$ denote its sup norm and $\|\cdot\|_{\rm Frob}$ its
Frobenius norm\footnote{The Frobenius norm
of a $2\times 2$ matrix $A=\left[\begin{array}{cc}a & b\\ c & d\end{array}\right]$ is
$\|A\|_{\rm Frob}=\sqrt{a^2+b^2+c^2+d^2}$.}. The Frobenius norm is equivalent to the usual sup norm,
with $\|\cdot\|\leq \|\cdot\|_{\rm Frob}\leq \sqrt{2}\|\cdot\|$.
  
\begin{lemma}\label{Lemma-linear-reduction}
For all $x\in{\rm NUH}_\chi$, the following holds:
\begin{enumerate}[{\rm (1)}]
\item $\|C_\chi(x)\|\leq \|C_\chi(x)\|_{\rm Frob}\leq 1$
and $\|C_\chi(x)^{-1}\|_{\rm Frob}=\tfrac{\sqrt{s(x)^2+u(x)^2}}{|\sin\alpha(x)|}$.
\item $C_\chi(f(x))^{-1}\circ df_x\circ C_\chi(x)$ is a diagonal matrix with diagonal entries $A,B\in\R$
s.t. $|A|<e^{-\chi}$ and $|B|>e^\chi$.
\end{enumerate}
\end{lemma}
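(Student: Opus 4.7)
The plan is to treat parts (1) and (2) independently, both being direct algebraic manipulations once we unravel the definitions.

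\textbf{Part (1).} First I would work in an orthonormal basis of $T_xM$ chosen so that $e^s_x = (1,0)$ and $e^u_x = (\cos\alpha(x),\sin\alpha(x))$. In this basis, $C_\chi(x)$ has the upper-triangular matrix representation
\[
C_\chi(x)=\begin{pmatrix} 1/s(x) & \cos\alpha(x)/u(x)\\ 0 & \sin\alpha(x)/u(x)\end{pmatrix}.
\]
The Frobenius norm is the Euclidean norm of the vector of columns, namely $\sqrt{1/s(x)^2+1/u(x)^2}$; since $s(x),u(x)\geq\sqrt{2}$, this is at most $1$, and the general inequality $\|\cdot\|\leq\|\cdot\|_{\rm Frob}$ handles the first estimate. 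Inverting the triangular matrix (its determinant equals $\sin\alpha(x)/(s(x)u(x))$) gives
\[
C_\chi(x)^{-1}=\begin{pmatrix} s(x) & -s(x)\cos\alpha(x)/\sin\alpha(x)\\ 0 & u(x)/\sin\alpha(x)\end{pmatrix},
\]
whose squared Frobenius norm collapses via $1+\cot^2\alpha=1/\sin^2\alpha$ to $(s(x)^2+u(x)^2)/\sin^2\alpha(x)$, yielding the claimed identity.

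\textbf{Part (2).} Diagonality is automatic from property (2) of $\mathrm{NUH}_\chi$: since $df_x$ sends the line $\mathbb R e^s_x$ to $\mathbb R e^s_{f(x)}$ and similarly for the unstable direction, we may write $df_x e^s_x=\lambda^s_x e^s_{f(x)}$ and $df_x e^u_x=\lambda^u_x e^u_{f(x)}$ for scalars $\lambda^s_x,\lambda^u_x\in\mathbb R$. Composing with $C_\chi(x)$ and $C_\chi(f(x))^{-1}$ shows
\[
A=\frac{\lambda^s_x\, s(f(x))}{s(x)},\qquad B=\frac{\lambda^u_x\, u(f(x))}{u(x)}.
\]

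\textbf{Bounds on $A$ and $B$.} The key observation is a telescoping identity for $s,u$. Splitting off the $n=0$ term,
\[
s(x)^2=2+2\sum_{n\geq 1}e^{2n\chi}\|df^n_x e^s_x\|^2,
\]
and using $df^n_x e^s_x=\lambda^s_x\, df^{n-1}_{f(x)}e^s_{f(x)}$, a shift of index gives
\[
s(x)^2=2+(\lambda^s_x)^2 e^{2\chi}\, s(f(x))^2.
\]
Hence $(\lambda^s_x)^2 s(f(x))^2/s(x)^2<e^{-2\chi}$, i.e.\ $|A|<e^{-\chi}$. The symmetric computation, applied to $u(f(x))^2$ with $df^{-n}_{f(x)}e^u_{f(x)}=(\lambda^u_x)^{-1}df^{-(n-1)}_x e^u_x$, yields
\[
u(f(x))^2=2+(\lambda^u_x)^{-2}e^{2\chi}\,u(x)^2,
\]
from which $(\lambda^u_x)^2 u(f(x))^2/u(x)^2>e^{2\chi}$, i.e.\ $|B|>e^\chi$. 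There is no real obstacle here; the only thing to be careful about is the index shift in the telescoping identity and the sign/orientation conventions in the choice of basis, but nothing beyond routine linear algebra is required.
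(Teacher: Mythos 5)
Your proof is correct and follows essentially the same route as the paper: the same explicit matrix representation of $C_\chi(x)$ in an adapted orthonormal basis for part (1), and the same one-step telescoping identity $s(x)^2 = 2 + (\lambda^s_x)^2 e^{2\chi} s(f(x))^2$ (equivalently, the paper's $s(f(x))^2 = (s(x)^2-2)/(e^{2\chi}\|df_x e^s_x\|^2)$) together with its unstable analogue for part (2).
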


\begin{proof}
(a) In the basis $\{e_1,e_2\}$ of $\R^2$ and the basis $\{e^s_x,(e^s_x)^\perp\}$ of $T_xM$, $C_\chi(x)$ takes the
form $\left[\begin{array}{cc}\tfrac{1}{s(x)}& \tfrac{\cos\alpha(x)}{u(x)}\\ 0&\tfrac{\sin\alpha(x)}{u(x)}\end{array}\right]$,
hence $\|C_\chi(x)\|_{\rm Frob}^2=\tfrac{1}{s(x)^2}+\tfrac{1}{u(x)^2}\leq 1$. The inverse of
$C_\chi(x)$ is
$\left[\begin{array}{cc}s(x)& -\tfrac{s(x)\cos\alpha(x)}{\sin\alpha(x)}\\ 0&\tfrac{u(x)}{\sin\alpha(x)}\end{array}\right]$,
therefore $\|C_\chi(x)^{-1}\|_{\rm Frob}=\tfrac{\sqrt{s(x)^2+u(x)^2}}{|\sin\alpha(x)|}$.

\medskip
\noindent
(b) It is clear that $e_1,e_2$ are eigenvectors of $C_\chi(f(x))^{-1}\circ df_x\circ C_\chi(x)$.
We calculate the eigenvalue of $e_1$ (the calculation of the eigenvalue of $e_2$ is similar).
Since $df_xe^s_x=\pm\|df_xe^s_x\|e^s_{f(x)}$,
$[df_x\circ C_\chi(x)](e_1)=\pm df_x\left[\tfrac{e^s_x}{s(x)}\right]=\pm\tfrac{\|df_xe^s_x\|}{s(x)}e^s_{f(x)}$, hence
$[C_\chi(f(x))^{-1}\circ df_x\circ C_\chi(x)](e_1)=\pm\|df_xe^s_x\|\tfrac{s(f(x))}{s(x)}e_1$.
Thus $A:=\pm\|dfe^s_x\|\tfrac{s(f(x))}{s(x)}$ is the eigenvalue of $e_1$. Note that
\begin{align*}
s(f(x))^2&=\tfrac{2}{e^{2\chi}\|df_xe^s_x\|^2}\sum_{n\geq 1}e^{2n\chi}\|df^n_xe^s_x\|^2
=\tfrac{s(x)^2-2}{e^{2\chi}\|df_xe^s_x\|^2}<\tfrac{s(x)^2}{e^{2\chi}\|df_xe^s_x\|^2},
\end{align*}
therefore $|A|<e^{-\chi}$.
\end{proof}

\subsection{The set ${\rm NUH}_\chi^*$}

We need to control the exponential rate decay of the distance of
trajectories to the set of discontinuities $\mathfs D$.

\medskip
\noindent
{\sc Regular set:} We define the {\em regular set of $f$} by 
$$
{\rm Reg}:=\left\{x\in M\backslash\mathfs D:\lim_{n\to\pm\infty}
\tfrac{1}{|n|}\log\rho(f^n(x))=0\right\}.
$$



\medskip
\noindent
{\sc The set ${\rm NUH}_\chi^*$:} It is the set of $x\in{\rm NUH}_\chi$ with the following properties:
\begin{enumerate}[(1)]
\item $x\in{\rm Reg}$.
\item There exist sequences $n_k,m_k\to\infty$ s.t.
$C_\chi(f^{n_k}(x)),C_\chi(f^{-m_k}(x))\to C_\chi(x)$. 
\item $\lim_{n\to\pm\infty}\tfrac{1}{|n|}\log\|C_\chi(f^n(x))\|=0$.
\item $\lim_{n\to\pm\infty}\tfrac{1}{|n|}\log\|C_\chi(f^n(x))^{-1}\|=0$.
\end{enumerate}

\medskip
The next lemma shows that relevant measures are carried by ${\rm NUH}_\chi^*$.


\begin{lemma}\label{Lemma-adaptedness}
If $\mu$ is $f$--adapted and $\chi$--hyperbolic, then $\mu[{\rm NUH}_\chi^*]=1$.
\end{lemma}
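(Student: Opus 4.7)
The plan is to verify, for $\mu$-a.e. $x$, first that $x\in{\rm NUH}_\chi$ and then the four extra conditions defining ${\rm NUH}_\chi^*$. The ingredients are Oseledets theory, $f$-adaptedness of $\mu$, and Poincar\'e recurrence.

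First I would check that $\mu[{\rm NUH}_\chi]=1$. By (A5)' one has $\log\|df^{\pm1}\|\leq a|\log\rho|$, and by $f$-adaptedness ($\log\rho\in L^1(\mu)$, see the remark at the end of \S\ref{Section-preliminaries}) the Oseledets multiplicative ergodic theorem applies. Combined with $\chi$-hyperbolicity, it produces $\mu$-a.e. unit Oseledets vectors $e^{s/u}_{f^n(x)}$ with Lyapunov exponents $<-\chi$, $>\chi$ respectively and with tempered angles, yielding (1)--(4) of ${\rm NUH}_\chi$.

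Next I would verify conditions (1) and (2) of ${\rm NUH}_\chi^*$. For (1): $\log\rho\in L^1(\mu)$ plus $f$-invariance give, for every $\varepsilon>0$,
$$\sum_{n\geq 0}\mu\bigl\{|\log\rho\circ f^{\pm n}|>\varepsilon n\bigr\}=\sum_{n\geq 0}\mu\{|\log\rho|>\varepsilon n\}\leq \varepsilon^{-1}\|\log\rho\|_{L^1}<\infty;$$
Borel--Cantelli then yields $|\log\rho(f^n(x))|\leq\varepsilon|n|$ eventually $\mu$-a.e., so letting $\varepsilon\to 0$ along a rational sequence gives $x\in{\rm Reg}$. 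For (2): the map $x\mapsto C_\chi(x)$ is measurable into the separable space of $2\times 2$ matrices (represented in the basis $\{e^s_x,(e^s_x)^\perp\}$). Choosing a countable basis $\{U_k\}$ of open neighborhoods and applying Poincar\'e recurrence to $f$ and $f^{-1}$ on each $A_k:=C_\chi^{-1}(U_k)$ produces a full-measure set on which $C_\chi(f^n(x))$ returns to every open neighborhood of $C_\chi(x)$ infinitely often in both time directions, giving the required sequences $n_k,m_k$.

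For (3)--(4), Lemma \ref{Lemma-linear-reduction} gives $\|C_\chi\|\leq 1$ and $\|C_\chi^{-1}\|_{\rm Frob}=\sqrt{s^2+u^2}/|\sin\alpha|$, so temperedness of $\|C_\chi^{\pm 1}(f^n(x))\|$ reduces to temperedness of $s$, $u$ and $|\sin\alpha|$. Condition (4) of ${\rm NUH}_\chi$ covers $|\sin\alpha|$. For $s$, a chain-rule computation (using $e^s_{f^n(x)}=\pm df^n_x e^s_x/\|df^n_x e^s_x\|$) gives
$$s(f^n(x))^2=\frac{2\,e^{-2n\chi}}{\|df^n_x e^s_x\|^2}\sum_{m\geq n}e^{2m\chi}\|df^m_x e^s_x\|^2,\qquad n\in\Z.$$
Using the Oseledets asymptotic $\|df^m_x e^s_x\|=e^{m\lambda^s(x)+o(|m|)}$ with $\lambda^s(x)<-\chi$, the tail of the series is dominated by its leading term up to sub-exponential errors, and this cancels the prefactor, giving $s(f^n(x))=e^{o(|n|)}$; combined with the trivial lower bound $s\geq\sqrt{2}$ this is temperedness. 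The $u$-case is symmetric (reverse time). The hardest step is this temperedness: the function $s$ can be arbitrarily large at individual points and the one-step recursion from Lemma \ref{Lemma-linear-reduction} (or the crude $\rho^{-a}$ bound on $\|df\|$) only yields exponential control. Temperedness instead hinges on the precise interplay in the displayed identity between the geometric weight $e^{2m\chi}$ and the Oseledets decay of $\|df^m_x e^s_x\|$, whose product decays at rate $e^{2(\chi+\lambda^s)}<1$ and thereby confines the tail to the size of its leading term.
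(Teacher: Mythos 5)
Your proposal is correct, and for the substantive part of the lemma --- conditions (3) and (4) of ${\rm NUH}_\chi^*$ --- it takes a genuinely different route from the paper. The paper introduces the reduced cocycle $D_\chi(x)=C_\chi(f(x))^{-1}\circ df_x\circ C_\chi(x)$, checks that $\log\|D_\chi^{\pm1}\|\in L^1(\mu)$ via the $f$--adaptedness bound $s(x)^2\geq 2(1+e^{2\chi}\rho(x)^{2a})$, applies Oseledets a second time, identifies the Lyapunov spectra of $D_\chi^{(n)}$ and $df^n$ along the recurrence times of $C_\chi$, and then needs an extra normalizing-matrix argument ($\Lambda_\chi$, $\Lambda$) because the sandwich inequality that yields (3) does not yield (4). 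You instead exploit the explicit formula $\|C_\chi^{-1}\|_{\rm Frob}=\sqrt{s^2+u^2}/|\sin\alpha|$ from Lemma \ref{Lemma-linear-reduction} to reduce everything to temperedness of $s$, $u$ and $|\sin\alpha|$ (the last being condition (4) of ${\rm NUH}_\chi$), and prove temperedness of $s$ directly from the cocycle identity $s(f^n(x))^2=\tfrac{2e^{-2n\chi}}{\|df^n_xe^s_x\|^2}\sum_{m\geq n}e^{2m\chi}\|df^m_xe^s_x\|^2$: since $e^{2m\chi}\|df^m_xe^s_x\|^2=e^{2m(\chi+\lambda^s)+o(|m|)}$ decays geometrically in $m$, the tail is a sub-exponential multiple of its $m=n$ term, which the prefactor cancels exactly; the lower bound $s\geq\sqrt2$ and the reverse-time case for $u$ complete the argument (the $m\to-\infty$ asymptotics needed for $n\to-\infty$ are available because the two-sided Oseledets theorem applies, condition (3) of ${\rm NUH}_\chi$). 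This is more elementary --- one application of Oseledets instead of two, and no normalizing matrices --- and it delivers (3) and (4) simultaneously since $\|C_\chi\|\leq1$ together with $\|C_\chi\|\geq1/\|C_\chi^{-1}\|$ reduces (3) to (4). Your Borel--Cantelli derivation of $\mu({\rm Reg})=1$ is also a valid (and interchangeable) substitute for the paper's Birkhoff-difference footnote. The only thing you lose relative to the paper is the identity of the Lyapunov spectra of $D_\chi^{(n)}$ and $df^n$ (equation (\ref{equality-spectra})), but that identity is used nowhere outside the paper's own proof of this lemma.
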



\begin{proof}
By (A5) and the $f$--adaptedness of $\mu$, $\int \log^+\|df^{\pm 1}\|d\mu<\infty$
hence the Oseledets theorem applies to the cocycle $df^n$ and
measure $\mu$.
Since $\mu$ is $\chi$--hyperbolic, $\mu[{\rm NUH}_\chi]=1$.
By $f$--adaptedness and the Birkhoff ergodic theorem\footnote{Here we are using that if $\varphi:M\to\R$ satisfies
$\int |\varphi|d\mu<\infty$ then $\liminf_{n\to\pm\infty}\tfrac{1}{n}\varphi(f^n(x))=0$ $\mu$--a.e.
Indeed, by the Birkhoff theorem
$\widetilde\varphi(x)=\lim_{n\to\infty}\tfrac{1}{n}\sum_{i=0}^{n-1}\varphi(f^i(x))$ exists $\mu$--a.e., hence
$\lim_{n\to\infty}\tfrac{1}{n}\varphi(f^n(x))=\lim_{n\to\infty}\left[\tfrac{1}{n}\sum_{i=0}^{n}\varphi(f^i(x))
-\tfrac{1}{n}\sum_{i=0}^{n-1}\varphi(f^i(x))\right]=0$ $\mu$--a.e. The same argument works
for $n\to-\infty$.},
$\mu({\rm Reg})=1$.
By the Poincar\'e recurrence theorem, (2) holds $\mu$--a.e. It remains to check (3)--(4).

\medskip
For $x\in{\rm NUH}_\chi$, let $D_\chi(x):=C_\chi(f(x))^{-1}\circ df_x\circ C_\chi(x)$.
This defines a cocycle $D_\chi^{(n)}$ on ${\rm NUH}_\chi$.
We first show that we can apply the Oseledets theorem for $D_\chi^{(n)}$ and $\mu$.
By lemma \ref{Lemma-linear-reduction} and its proof,
$D_\chi(x)=\left[\begin{array}{cc}A(x) & 0 \\ 0 & B(x) \end{array}\right]$
where $A(x)^2=e^{-2\chi}\tfrac{s(x)^2-2}{s(x)^2}$ and $B(x)^2=e^{2\chi}\tfrac{u(f(x))^2}{u(f(x))^2-2}$.
We have $\|D_\chi(x)\|=|B(x)|$ and $\|D_\chi(x)^{-1}\|=|A(x)|^{-1}$,
therefore we wish to show that
\begin{align*}
\int \log |A(x)|d\mu(x)>-\infty\ \text{ and }\ \int \log |B(x)|d\mu(x)<\infty.
\end{align*}
We prove the first inequality (the second inequality is proved similarly). By (A6),
$s(x)^2\geq 2(1+e^{2\chi}\|df_xe^s_x\|^2)\geq 2(1+e^{2\chi}\rho(x)^{2a})$
hence
$$
A(x)^2=e^{-2\chi}\tfrac{s(x)^2-2}{s(x)^2}=e^{-2\chi}\left(1-\tfrac{2}{s(x)^2}\right)
\geq\tfrac{\rho(x)^{2a}}{1+e^{2\chi}\rho(x)^{2a}}\geq\tfrac{\rho(x)^{2a}}{1+e^{2\chi}}\,\cdot
$$
Therefore
$$
\int \log|A(x)|d\mu(x)\geq a\int\log \rho(x)d\mu(x)-\tfrac{1}{2}\log(1+e^{2\chi})>-\infty.
$$
By a similar reasoning, $\int \log |B(x)|d\mu(x)<\infty$.
Therefore we can apply the Oseledets theorem for $D_\chi^{(n)}$ and $\mu$:
there is an $f$--invariant set $X\subset{\rm NUH}_\chi$ with $\mu(X)=1$ s.t. every
$x\in X$ satisfies (2) and $\lim_{n\to\infty}\tfrac{1}{n}\log\|D_\chi^{(n)}(x)\|$ exists.
We claim that (3)--(4) hold in $X$.

\medskip
We first show that the Lyapunov exponents of $D_\chi^{(n)}$ and $df^n$ coincide in $X$.
Fix $x\in X$, and take $n_k\to\infty$ s.t. $C_\chi(f^{n_k}(x))\to C_\chi(x)$.
Since
$\|D_\chi^{(n)}(x)\|\leq \|C_\chi(f^{n}(x))^{-1}\|\|df^n_x\|\|C_\chi(x)\|\leq \|C_\chi(f^{n}(x))^{-1}\|\|df^n_x\|$,
\begin{align*}
&\lim_{n\to\infty}\tfrac{1}{n}\log\|D_\chi^{(n)}(x)\|=\limsup_{k\to\infty}\tfrac{1}{n_k}\log\|D_\chi^{(n_k)}(x)\|\\
&\leq \limsup_{k\to\infty}\tfrac{1}{n_k}\log\|C_\chi(f^{n_k}(x))^{-1}\|+\limsup_{k\to\infty}\tfrac{1}{n_k}\log\|df^{n_k}_x\|
=\lim_{n\to\infty}\tfrac{1}{n}\log\|df^n_x\|.
\end{align*}
Similarly, $\|df^n_x\|\leq \|C_\chi(f^n(x))\|\|D_\chi^{(n)}(x)\|\|C_\chi(x)^{-1}\|\leq\|D_\chi^{(n)}(x)\|\|C_\chi(x)^{-1}\|$,
thus
\begin{align*}
&\lim_{n\to\infty}\tfrac{1}{n}\log\|df^n_x\|=\limsup_{k\to\infty}\tfrac{1}{n_k}\log\|df^{n_k}_x\|
\leq \limsup_{k\to\infty}\tfrac{1}{n_k}\log\|D_\chi^{(n_k)}(x)\|\\
&=\lim_{n\to\infty}\tfrac{1}{n}\log\|D_\chi^{(n)}(x)\|.
\end{align*}
Hence $\lim_{n\to\infty}\tfrac{1}{n}\log\|D_\chi^{(n)}(x)\|=\lim_{n\to\infty}\tfrac{1}{n}\log\|df^n_x\|$.
Applying the same argument along the sequence $m_k\to\infty$ for which
$C_\chi(f^{-m_k}(x))\to C_\chi(x)$, we obtain
\begin{equation}\label{equality-spectra}
\lim_{n\to\pm\infty}\tfrac{1}{|n|}\log\|D_\chi^{(n)}(x)\|=\lim_{n\to\pm\infty}\tfrac{1}{|n|}\log\|df^n_x\|.
\end{equation}

\medskip
Since $\|C_\chi(\cdot)\|\leq 1$, $\limsup_{n\to\pm\infty}\tfrac{1}{|n|}\log\|C_\chi(f^n(x))\|\leq 0$. Reversely,
the inequality $\|df^n_x\|\leq \|C_\chi(f^n(x))\|\|D_\chi^{(n)}(x)\|\|C_\chi(x)^{-1}\|$ implies
$$
\liminf_{n\to\pm\infty}\tfrac{1}{|n|}\log\|C_\chi(f^n(x))\|\geq \lim_{n\to\pm\infty}\tfrac{1}{|n|}\log\|df^n_x\|-
\lim_{n\to\pm\infty}\tfrac{1}{|n|}\log\|D_\chi^{(n)}(x)\|=0.
$$
This proves (3). A similar argument to the proof of (3) does {\em not} give (4). For that, we introduce some normalizing matrices. 
Let $\lambda_1(x),\lambda_2(x)$ be the Lyapunov exponents of $df^n$ at $x$. By (\ref{equality-spectra}),
$D_\chi^{(n)}$ has the same Lyapunov exponents at $x$. Taking
$\Lambda_\chi(x):=\left[\begin{array}{cc}\lambda_1(x) & 0 \\ 0 & \lambda_2(x)\end{array}\right]$,
we have $\lim_{n\to\pm\infty}\tfrac{1}{|n|}\log \|(D_\chi^{(n)}(x)\Lambda_\chi(x)^{-n})^{\pm 1}\|=0$.

\medskip
Similarly, we can define $\Lambda(x):T_xM\to T_xM$ by $\Lambda(x)e^s_x=\lambda_1(x)e^s_x$
and $\Lambda(x)e^u_x=\lambda_2(x)e^u_x$ and observe that
$\lim_{n\to\pm\infty}\tfrac{1}{|n|}\log\|(df^n_x\Lambda(x)^{-n})^{\pm 1}\|=0$. Since
$\Lambda_\chi(x)=C_\chi(x)^{-1} \Lambda(x) C_\chi(x)$, it follows that
\begin{align*}
&C_\chi(f^n(x))^{-1}=D_\chi^{(n)}(x)C_\chi(x)^{-1}(df^n_x)^{-1}\\
&=[D_\chi^{(n)}(x)\Lambda_\chi(x)^{-n}][\Lambda_\chi(x)^n C_\chi(x)^{-1} \Lambda(x)^{-n}]
[df^n_x\Lambda(x)^{-n}]^{-1}\\
&=[D_\chi^{(n)}(x)\Lambda_\chi(x)^{-n}]C_\chi(x)^{-1} [df^n_x\Lambda(x)^{-n}]^{-1}
\end{align*}
and hence
\begin{align*}
&\limsup_{n\to\pm\infty}\tfrac{1}{|n|}\log\|C_\chi(f^n(x))^{-1}\|\\
&\leq\lim_{n\to\pm\infty}\tfrac{1}{|n|}\log \|D_\chi^{(n)}(x)\Lambda_\chi(x)^{-n}\|+
\lim_{n\to\pm\infty}\tfrac{1}{|n|}\log\|(df^n_x\Lambda(x)^{-n})^{-1}\|=0.
\end{align*}
Since $\liminf_{n\to\pm\infty}\tfrac{1}{|n|}\log\|C_\chi(f^n(x))^{-1}\|\geq 0$, property (4) holds.
Hence $X$ satisfies (2)--(4) and $\mu[X]=1$. Therefore
$X\cap{\rm Reg}\subset {\rm NUH}_\chi^*$ has full $\mu$--measure.
\end{proof}

\section{Non-linear Pesin theory}

We now define charts that make $f$ itself look like a hyperbolic matrix.

\medskip
\noindent
{\sc Pesin chart $\Psi_x$:} For $x\in{\rm NUH}_\chi$, let
$\Psi_x:R[\mathfrak r(x)]\to M$, $\Psi_x:=\exp{x}\circ C_\chi(x)$.
$\Psi_x$ is called the {\em Pesin chart at $x$}.

\medskip
Given $x\in M\backslash\mathfs D$, let $\iota_x:T_xM\to \R^2$ be an isometry.
If $y\in D_x$ and $A:\R^2\to T_yM$ is a linear map,
we can define $\widetilde{A}:\R^2\to \R^2$, $\widetilde{A}:=\iota_x\circ P_{y,x}\circ A$.
Again, $\widetilde A$ depends on $x$ but $\|\widetilde{A}\|$ does not.

\begin{lemma}\label{Lemma-Pesin-chart}
The Pesin chart $\Psi_x$ is a diffeomorphism onto its image. Moreover:
\begin{enumerate}[{\rm (1)}]
\item $\Psi_x$ is $2$--Lipschitz and $\Psi_x^{-1}$ is $2\|C_\chi(x)^{-1}\|$--Lipschitz.
\item $\|\widetilde{d(\Psi_x)_{v_1}}-\widetilde{d(\Psi_x)_{v_2}}\|\leq d(x,\mathfs D)^{-a}\|v_1-v_2\|$
for all $v_1,v_2\in R[\mathfrak r(x)]$.
\end{enumerate}
\end{lemma}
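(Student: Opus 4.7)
The plan is to exploit the factorization $\Psi_x=\exp{x}\circ C_\chi(x)$, treating $C_\chi(x)$ via the linear estimates of Lemma~\ref{Lemma-linear-reduction}(1) and $\exp{x}$ via assumptions (A1)--(A3).

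For part (1), since $\|C_\chi(x)\|\le 1$, the map $C_\chi(x)$ is $1$--Lipschitz and sends $R[\mathfrak r(x)]$ into $B_x[\sqrt{2}\,\mathfrak r(x)]\subset B_x[2\mathfrak r(x)]$. By (A2) we have $\|d(\exp{x})_v\|\le 2$ on that ball, so $\exp{x}$ is $2$--Lipschitz there (for tangent vectors with common basepoint the Sasaki distance coincides with the Euclidean norm of the difference). Composing yields the $2$--Lipschitz bound for $\Psi_x$. For $\Psi_x^{-1}=C_\chi(x)^{-1}\circ\exp{x}^{-1}$, the second inequality in (A2) shows $\exp{x}^{-1}:D_x\to T_xM$ is $2$--Lipschitz, and $C_\chi(x)^{-1}$ is linear with operator norm $\|C_\chi(x)^{-1}\|$, giving the claimed bound. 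The diffeomorphism property holds because $C_\chi(x)$ is a linear isomorphism (its defining vectors $e^s_x,e^u_x$ are non-collinear, since $\sin\alpha(x)\ne 0$ at any $x\in{\rm NUH}_\chi$) and $\exp{x}$ is a diffeomorphism on $B_x[2\mathfrak r(x)]$ by the injectivity-radius bound in (A1).

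For part (2), apply the chain rule $d(\Psi_x)_v=d(\exp{x})_{C_\chi(x)v}\circ C_\chi(x)$. Since $d(\Psi_x)_v:\R^2\to T_{\Psi_x(v)}M$, the tilde notation introduced just before the lemma yields
\[
\widetilde{d(\Psi_x)_v}=\iota_x\circ P_{\Psi_x(v),x}\circ d(\exp{x})_{C_\chi(x)v}\circ C_\chi(x)=\iota_x\circ\widetilde{d(\exp{x})_{C_\chi(x)v}}\circ C_\chi(x),
\]
where the second tilde is the $\mathfs L_x$--version with $y=x$ (so that $P_{x,y}={\rm id}$). Subtracting the two values of $v$, using that $\iota_x$ is an isometry, and applying (A3) with $y_1=y_2=x$ (so that the Sasaki distance between $C_\chi(x)v_1$ and $C_\chi(x)v_2$ collapses to $\|C_\chi(x)(v_1-v_2)\|$), I obtain
\[
\|\widetilde{d(\Psi_x)_{v_1}}-\widetilde{d(\Psi_x)_{v_2}}\|\le d(x,\mathfs D)^{-a}\|C_\chi(x)\|^2\|v_1-v_2\|\le d(x,\mathfs D)^{-a}\|v_1-v_2\|,
\]
using $\|C_\chi(x)\|\le 1$ one last time.

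I do not anticipate a substantive obstacle: the whole argument is essentially bookkeeping built on the chain rule plus the $C^{1+\mathrm{Lip}}$ regularity of $\exp{x}$ already encoded in (A2)--(A3). The one point requiring attention is matching the two tilde conventions (the $\mathfs L_{y,z}$ one and the one for linear maps out of $\R^2$), and that is immediate once both definitions are written out as above.
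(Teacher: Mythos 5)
Your proposal is correct and follows essentially the same route as the paper: factor $\Psi_x=\exp{x}\circ C_\chi(x)$, use $\|C_\chi(x)\|\leq 1$ from Lemma \ref{Lemma-linear-reduction}(1) together with (A1)--(A2) for part (1) and the well-definedness, and the chain rule plus (A3) for part (2). The extra factor $\|C_\chi(x)\|^2$ you carry in part (2) is harmless since it is $\leq 1$, and your care with the two tilde conventions matches what the paper does implicitly.
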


\begin{proof}
Since $C_\chi(x)$ is a contraction,
$C_\chi(x)R[\mathfrak r(x)]\subset B_x[2\mathfrak r(x)]$
and so $\Psi_x$ is well-defined with inverse $C_\chi(x)^{-1}\circ \exp{x}^{-1}$.
It is a diffeomorphism because $C_\chi(x)$ and $\exp{x}$ are.

\medskip
\noindent
(1) By (A2), $\Psi_x$ is $2$--Lipschitz and
$\Psi_x^{-1}$ is $2\|C_\chi(x)^{-1}\|$--Lipschitz.

\medskip
\noindent
(2) Since $C_\chi(x)v_i\in B_x[2\mathfrak r(x)]$, (A3) implies that
\begin{align*}
&\|\widetilde{d(\Psi_x)_{v_1}}-\widetilde{d(\Psi_x)_{v_2}}\|=
\|\widetilde{d(\exp{x})_{C_\chi(x)v_1}}\circ C_\chi(x)-
\widetilde{d(\exp{x})_{C_\chi(x)v_2}}\circ C_\chi(x)\|\\
&\leq d(x,\mathfs D)^{-a}\|C_\chi(x)v_1-C_\chi(x)v_2\|\leq d(x,\mathfs D)^{-a}\|v_1-v_2\|.
\end{align*}
\end{proof}

\medskip
Given $\ve>0$, let $I_\ve:=\{e^{-\frac{1}{3}\ve n}:n\geq 0\}$.

\medskip
\noindent
{\sc Parameter $Q_\ve(x)$:} For $x\in{\rm NUH}_\chi$, let
$Q_\ve(x):=\max\{q\in I_\ve:q\leq \widetilde Q_\ve(x)\}$, where
$$
\widetilde Q_\ve(x)=\ve^{3/\beta}
\min\left\{\|C_\chi(x)^{-1}\|_{\rm Frob}^{-24/\beta},\|C_\chi(f(x))^{-1}\|^{-12/\beta}_{\rm Frob}\rho(x)^{72a/\beta}\right\}.
$$

\medskip
The term $\ve^{3/\beta}$ will allow to absorb multiplicative constants.
The choice of $Q_\ve(x)$ guarantees that
the composition $\Psi_{f(x)}^{-1}\circ f\circ \Psi_x$ is well-defined in $R[10Q_\ve(x)]$
and it is close to a linear hyperbolic map (Theorem \ref{Thm-non-linear-Pesin}),
and it allows to compare nearby Pesin charts (Proposition \ref{Lemma-overlap}).
We have the following bounds:
\begin{align*}
&Q_\ve(x)\leq \ve^{3/\beta}, \|C_\chi(x)^{-1}\|Q_\ve(x)^{\beta/24}\leq \ve^{1/8},
\|C_\chi(f(x))^{-1}\|Q_\ve(x)^{\beta/12}\leq \ve^{1/4},\\
&\rho(x)^{-a}Q_\ve(x)^{\beta/72}<\ve^{1/24}.
\end{align*}
%

\begin{lemma}[Temperedness lemma]\label{Lemma-temperedness}
If $x\in{\rm NUH}_\chi^*$, then
$$
\lim_{n\to\pm\infty}\tfrac{1}{|n|}\log Q_\ve(f^n(x))=0.
$$
\end{lemma}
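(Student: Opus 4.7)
The plan is to reduce the statement to a direct unpacking of the three defining properties of ${\rm NUH}_\chi^*$; after taking logarithms the claim becomes essentially mechanical.

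First I would eliminate the discretization. Since $I_\ve$ is the geometric ladder with ratio $e^{-\ve/3}$ and $\widetilde Q_\ve(y)\leq \ve^{3/\beta}<1$ (for $\ve$ small), the largest element of $I_\ve$ lying below $\widetilde Q_\ve(y)$ satisfies $e^{-\ve/3}\widetilde Q_\ve(y)\leq Q_\ve(y)\leq \widetilde Q_\ve(y)$. Consequently $|\log Q_\ve(y)-\log\widetilde Q_\ve(y)|\leq \ve/3$ uniformly in $y$, so it is enough to show that $\tfrac{1}{|n|}\log\widetilde Q_\ve(f^n(x))\to 0$.

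Taking logarithms in the definition of $\widetilde Q_\ve$,
$$
\log\widetilde Q_\ve(y)=\tfrac{3}{\beta}\log\ve+\min\!\left\{-\tfrac{24}{\beta}\log\|C_\chi(y)^{-1}\|_{\rm Frob},\ -\tfrac{12}{\beta}\log\|C_\chi(f(y))^{-1}\|_{\rm Frob}+\tfrac{72a}{\beta}\log\rho(y)\right\}.
$$
Setting $y=f^n(x)$ and dividing by $|n|$, I would invoke the ${\rm NUH}_\chi^*$ properties term by term: property (4) gives $\tfrac{1}{|n|}\log\|C_\chi(f^n(x))^{-1}\|\to 0$; the same property applied to the orbit shifted by one step, combined with $|n+1|/|n|\to 1$, gives $\tfrac{1}{|n|}\log\|C_\chi(f^{n+1}(x))^{-1}\|\to 0$; and property (1), i.e.\ $x\in{\rm Reg}$, gives $\tfrac{1}{|n|}\log\rho(f^n(x))\to 0$ directly. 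The Frobenius versus sup norm discrepancy amounts to a multiplicative factor of $\sqrt2$, which disappears after dividing by $|n|$, and the constant summand $\tfrac{3\log\ve}{\beta}/|n|$ trivially vanishes.

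To conclude, I would use that if $a_n,b_n>0$ satisfy $\tfrac{\log a_n}{|n|},\tfrac{\log b_n}{|n|}\to 0$, then $\tfrac{1}{|n|}\log\min(a_n,b_n)=\min\!\left(\tfrac{\log a_n}{|n|},\tfrac{\log b_n}{|n|}\right)\to 0$. Applying this to the two arguments of the $\min$ above yields $\tfrac{1}{|n|}\log\widetilde Q_\ve(f^n(x))\to 0$, which together with the first step proves the lemma. There is no genuine obstacle: the conditions defining ${\rm NUH}_\chi^*$ (in particular (1) and (4)) were crafted precisely so that every ingredient of $\widetilde Q_\ve$ is tempered along the orbit.
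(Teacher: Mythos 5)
Your proof is correct and follows essentially the same route as the paper: the upper bound is immediate from $Q_\ve\leq\ve^{3/\beta}$, and the lower bound comes from unpacking $\widetilde Q_\ve$ and invoking property (1) ($x\in{\rm Reg}$, which controls $\rho(f^n(x))$) and property (4) (which controls $\|C_\chi(f^n(x))^{-1}\|$, and after an index shift also $\|C_\chi(f^{n+1}(x))^{-1}\|$) of ${\rm NUH}_\chi^*$. You spell out the discretization step $e^{-\ve/3}\widetilde Q_\ve\leq Q_\ve\leq\widetilde Q_\ve$ and the shifted-index term more explicitly than the paper does, but the argument is the same.
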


\begin{proof}
Clearly $\limsup_{n\to\pm\infty}\tfrac{1}{|n|}\log Q_\ve(f^n(x))\leq 0$.
Reversely, $x\in{\rm Reg}$ implies that $\lim_{n\to\pm\infty}\tfrac{1}{|n|}\log\rho(f^n(x))=0$.
By property (4) in the definition of ${\rm NUH}_\chi^*$,
$\lim_{n\to\pm\infty}\tfrac{1}{|n|}\log\|C_\chi(f^n(x))^{-1}\|=0$ hence $\liminf_{n\to\pm\infty}\tfrac{1}{|n|}\log Q_\ve(f^n(x))\geq 0$.
\end{proof}

\subsection{The map $f$ in Pesin charts}


\begin{theorem}\label{Thm-non-linear-Pesin}
The following holds for all $\ve>0$ small enough: If $x\in{\rm NUH}_\chi$
then $f_x:=\Psi_{f(x)}^{-1}\circ f\circ\Psi_x$ is well-defined on
$R[10Q_\ve(x)]$ and satisfies:
\begin{enumerate}[{\rm (1)}]
\item $d(f_x)_0=C_\chi(f(x))^{-1}\circ df_x\circ C_\chi(x)$.
\item $f_x(v_1,v_2)=(Av_1+h_1(v_1,v_2),Bv_2+h_2(v_1,v_2))$ for $(v_1,v_2)\in R[10Q_\ve(x)]$ where:
\begin{enumerate}[{\rm (a)}]
\item $|A|<e^{-\chi}$ and $|B|>e^\chi$, cf. Lemma \ref{Lemma-linear-reduction}.
\item $h_1(0,0)=h_2(0,0)=0$ and $\nabla h_1(0,0)=\nabla h_2(0,0)=0$.
\item $\|h_1\|_{1+\beta/2}<\ve$ and $\|h_2\|_{1+\beta/2}<\ve$.
\end{enumerate}
\item $\|df_x\|_0<\tfrac{2(1+e^{2\chi})}{\rho(x)^a}$.
\end{enumerate}
The norms above are taken in $R[10Q_\ve(x)]$.
A similar statement holds for $f_x^{-1}:=\Psi_x^{-1}\circ f^{-1}\circ \Psi_{f(x)}$.
\end{theorem}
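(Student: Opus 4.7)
The plan is to prove the theorem by writing
$$
f_x = C_\chi(f(x))^{-1} \circ \exp{f(x)}^{-1} \circ f \circ \exp{x} \circ C_\chi(x),
$$
and controlling each factor via (A1)--(A7) and the defining bounds on $Q_\ve(x)$. Well-definedness on $R[10Q_\ve(x)]$ comes first: since $\|C_\chi(x)\|\leq 1$ we have $C_\chi(x) R[10Q_\ve(x)] \subset B_x[10\sqrt{2}\,Q_\ve(x)] \subset B_x[2\mathfrak r(x)]$, the last inclusion because $Q_\ve(x) \leq \ve^{3/\beta}$ while $\mathfrak r(x) > d(x,\mathfs D)^a$, so by (A1)--(A2) the chart $\Psi_x$ is defined on the rectangle. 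Then (A5) sends this image into a ball around $f(x)$ of radius at most $4\rho(x)^{-a}Q_\ve(x) \leq 2\mathfrak r(f(x))$ (again using the defining bound on $Q_\ve(x)$, which contains a large positive power of $\rho(x)$), and $\Psi_{f(x)}^{-1}$ is then available on it.

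Items (1), (2a), (2b) follow by direct computation. The chain rule at $v=0$ gives $d(f_x)_0 = C_\chi(f(x))^{-1} \circ df_x \circ C_\chi(x)$, which by Lemma \ref{Lemma-linear-reduction}(b) is diagonal with entries $A,B$ satisfying $|A|<e^{-\chi}$ and $|B|>e^\chi$. Setting $h_i$ to be the nonlinear remainder in each coordinate then immediately gives $h_i(0) = \nabla h_i(0) = 0$.

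The main work is (2c). I would differentiate the chain-rule expression for $f_x$ and, via a telescoping identity, decompose $df_x(v) - df_x(v')$ into three contributions, one per interior factor $d\exp{x}$, $df$, $d\exp{f(x)}^{-1}$ that depends on the basepoint. Using (A3) to control the variation of $d\exp{x}$ and of the inverse-exponential map (Lipschitz constants $\leq \rho(x)^{-a}$), (A6) to control that of $df$ ($\beta$-H\"older with constant $\mathfrak K$), and the uniform bounds $\|d\exp{y}^{\pm 1}\|\leq 2$, $\|df\|\leq\rho(x)^{-a}$, $\|C_\chi(x)\|\leq 1$, yields a Lipschitz term of size $O(\|C_\chi(f(x))^{-1}\|\rho(x)^{-3a})$ from the two exponential factors and a $\beta$-H\"older term of size $O(\mathfrak K \|C_\chi(f(x))^{-1}\|)$ from the $f$ factor. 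Merging both into a single $(\beta/2)$-H\"older estimate on $R[10Q_\ve(x)]$ by absorbing powers of the diameter, I obtain
$$
\Hol{\beta/2}(df_x) \leq C\|C_\chi(f(x))^{-1}\|\rho(x)^{-3a}Q_\ve(x)^{1-\beta/2} + C\mathfrak K\|C_\chi(f(x))^{-1}\|Q_\ve(x)^{\beta/2}.
$$
Inserting the bounds $\|C_\chi(f(x))^{-1}\|Q_\ve(x)^{\beta/12}\leq\ve^{1/4}$ and $\rho(x)^{-a}Q_\ve(x)^{\beta/72}<\ve^{1/24}$, each summand is dominated by a positive power of $\ve$ (namely $\ve^{3/8}$ and $\ve^{1/4}$) times a non-negative power of $Q_\ve(x) \leq 1$, whence $\Hol{\beta/2}(dh_i) < \ve/3$ for $\ve$ small. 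Mean-value estimates with $dh_i(0)=0$ and $h_i(0)=0$ then control $\|dh_i\|_0$ and $\|h_i\|_0$ with extra factors of $Q_\ve(x)^{\beta/2}$ and $Q_\ve(x)$.

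For (3) I would use $B^2 = e^{2\chi}u(f(x))^2/(u(f(x))^2-2)$ from the proof of Lemma \ref{Lemma-linear-reduction}, together with $u(f(x))^2-2 \geq 2e^{2\chi}\|df^{-1}_{f(x)}e^u_{f(x)}\|^2 \geq 2e^{2\chi}\rho(x)^{2a}$, the last inequality because $\|df_x\|\leq\rho(x)^{-a}$ by (A5) forces $\|df^{-1}_{f(x)} v\|\geq\rho(x)^a\|v\|$. This yields $|B|\leq e^\chi+\rho(x)^{-a}$ and hence $\|df_x\|_0 \leq |B| + \|dh\|_0 \leq e^\chi + \rho(x)^{-a} + \ve < 2(1+e^{2\chi})/\rho(x)^a$ for $\ve$ small, using $\rho(x)\leq 1$ and $e^\chi \leq 1+e^{2\chi}$. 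The statement for $f_x^{-1} = \Psi_x^{-1}\circ f^{-1}\circ\Psi_{f(x)}$ follows by the symmetric argument with $f\leftrightarrow f^{-1}$ and $x\leftrightarrow f(x)$. The main obstacle throughout will be the careful tracking of exponents: the factor $\|C_\chi(f(x))^{-1}\|$ and the negative powers of $\rho(x)$ appearing in the derivative estimates must all be absorbed into positive powers of $Q_\ve(x)$, and the exponents $24/\beta$, $12/\beta$, $72a/\beta$ in the definition of $\widetilde Q_\ve$ are calibrated precisely so that this absorption succeeds after raising to H\"older exponents of order $\beta/2$.
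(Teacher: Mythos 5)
Your proposal is correct and follows essentially the same route as the paper: the same three-step well-definedness argument via (A1)--(A2), (A5) and the large power of $\rho(x)$ in $\widetilde Q_\ve$; the same telescoping decomposition of $d(f_x)_{w_1}-d(f_x)_{w_2}$ into the three interior factors controlled by (A2)--(A3), (A5)--(A6), yielding a bound of the form $\mathfrak K\rho(x)^{-3a}\|C_\chi(f(x))^{-1}\|\|w_1-w_2\|^{\beta}$ that is absorbed into $\ve$ by the exponents built into $Q_\ve(x)$ after passing to the $\beta/2$-H\"older scale; and the same mean-value and $|B|$-estimates for (2c) and (3). The only differences are cosmetic bookkeeping (you keep the Lipschitz and H\"older contributions separate before merging, while the paper lumps them into a single crude bound).
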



\begin{proof}
The first step is to show that $f_x:R[10Q_\ve(x)]\to\R^2$ is well-defined.
Using that $C_\chi(x)$ is a contraction, $C_\chi(x)R[10Q_\ve(x)]\subset B_x[20Q_\ve(x)]$.
Since $C_\chi(f(x))^{-1}$ is globally defined, it is enough to show that
$$
(f\circ\exp{x})(B_x[20Q_\ve(x)])\subset \exp{f(x)}(B_{f(x)}[2\mathfrak r(f(x))]).
$$
For small $\ve>0$ we have:
\begin{enumerate}[$\circ$]
\item $20Q_\ve(x)<2\mathfrak r(x)\Rightarrow\exp{x}$ is well-defined on $B_x[20Q_\ve(x)]$. By (A2),
$\exp{x}$ maps $B_x[20Q_\ve(x)]$ diffeomorphically into $B(x,40Q_\ve(x))$.
\item $40Q_\ve(x)<2\mathfrak r(x)\Rightarrow B(x,40Q_\ve(x))\subset B(x,2\mathfrak r(x))$. 
By (A5), $f$ maps $B(x,40Q_\ve(x))$ diffeomorphically into $B(f(x),40 \rho(x)^{-a}Q_\ve(x))$.
\item $40\rho(x)^{-a}Q_\ve(x)<\tfrac{\mathfrak r(f(x))}{2}\Rightarrow B(f(x),40 \rho(x)^{-a}Q_\ve(x))\subset
B\left(f(x),\frac{\mathfrak r(f(x))}{2}\right)$. By (A2),
$\exp{f(x)}^{-1}$ maps $B\left(f(x),\frac{\mathfrak r(f(x))}{2}\right)$ diffeomorphically into
$B_{f(x)}[\mathfrak r(f(x))]$.
\end{enumerate}
Therefore $f_x:R[10Q_\ve(x)]\to\R^2$ is a diffeomorphism onto its image.

\medskip
We check (1)--(2). Property (1) is clear since
$d(\Psi_x)_0=C_\chi(x)$ and $d(\Psi_{f(x)})_0=C_\chi(f(x))$. By Lemma \ref{Lemma-linear-reduction},
$d(f_x)_0=\left[\begin{array}{cc}A & 0 \\ 0 & B\end{array}\right]$ with $|A|<e^{-\chi}$ and
$|B|>e^\chi$. Define $h_1,h_2:R[10Q_\ve(x)]\to\R $ by 
$f_x(v_1,v_2)=(Av_1+h_1(v_1,v_2),Bv_2+h_2(v_1,v_2))$. Then (a)--(b) are automatically
satisfied. It remains to prove (c).

\medskip
\noindent
{\sc Claim:} $\|d(f_x)_{w_1}-d(f_x)_{w_2}\|\leq \tfrac{\ve}{3}\|w_1-w_2\|^{\beta/2}$
for all $w_1,w_2\in R[10Q_\ve(x)]$.

\medskip
Before proving the claim, let us show how to conclude (c). Let $h=(h_1,h_2)$.
If $\ve>0$ is small enough then $R[10Q_\ve(x)]\subset B_x[1]$. Applying the claim with $w_2=0$,
we get $\|dh_w\|\leq \frac{\ve}{3}\|w\|^{\beta/2}<\tfrac{\ve}{3}$. By the mean value inequality,
$\|h(w)\|\leq \tfrac{\ve}{3}\|w\|<\tfrac{\ve}{3}$, hence $\|h\|_{1+\beta/2}<\ve$.

\begin{proof}[Proof of the claim.]
For $i=1,2$, define
$$
A_i= \widetilde{d(\exp{f(x)}^{-1})_{(f\circ \exp{x})(w_i)}}\,,\
B_i=\widetilde{df_{\exp{x}(w_i)}}\,,\ C_i=\widetilde{d(\exp{x})_{w_i}}.
$$
We first estimate $\|A_1 B_1 C_1-A_2 B_2 C_2\|$.
\begin{enumerate}[$\circ$]
\item By (A2), $\|A_i\|\leq 2$. By (A2), (A3), (A5):
\begin{align*}
&\|A_1-A_2\|\leq d(f(x),\mathfs D)^{-a}d((f\circ \exp{x})(w_1),(f\circ \exp{x})(w_2))\\
&\leq 2d(x,\mathfs D)^{-a}d(f(x),\mathfs D)^{-a}\|w_1-w_2\|\leq 2\rho(x)^{-2a}\|w_1-w_2\|.
\end{align*}
\item By (A5), $\|B_i\|\leq \rho(x)^{-a}$. By (A2) and (A6):
$$
\|B_1-B_2\|\leq \mathfrak K d(\exp{x}(w_1),\exp{x}(w_2))^{\beta}\leq 2\mathfrak K\|w_1-w_2\|^\beta.
$$
\item By (A2), $\|C_i\|\leq 2$. By (A3):
$$
\|C_1-C_2\|\leq d(x,\mathfs D)^{-a}\|w_1-w_2\|\leq \rho(x)^{-a}\|w_1-w_2\|.
$$
\end{enumerate}
By a crude approximation, we get $\|A_1 B_1 C_1-A_2 B_2 C_2\|\leq 24\mathfrak K\rho(x)^{-3a}\|w_1-w_2\|^\beta$.
Now we estimate $\|d(f_x)_{w_1}-d(f_x)_{w_2}\|$:
\begin{align*}
&\|d(f_x)_{w_1}-d(f_x)_{w_2}\|\leq \|C_\chi(f(x))^{-1}\| \|A_1 B_1 C_1-A_2 B_2 C_2\| \|C_\chi(x)\|\\
&\leq 24\mathfrak K\rho(x)^{-3a}\|C_\chi(f(x))^{-1}\|\|w_1-w_2\|^\beta.
\end{align*}
Since $\|w_1-w_2\|<40Q_\ve(x)$, if $\ve>0$ is small enough then
\begin{align*}
&24\mathfrak K\rho(x)^{-3a}\|C_\chi(f(x))^{-1}\|\|w_1-w_2\|^{\beta/2}
\leq 200\mathfrak K\rho(x)^{-3a}\ve^{3/2}\|C_\chi(f(x))^{-1}\|^{-5}\rho(x)^{36a}\\
&\leq 200\mathfrak K\ve^{3/2}<\ve.
\end{align*}
This completes the proof of the claim.
\end{proof}

\medskip
\noindent
(3) In the proof of Lemma \ref{Lemma-adaptedness} we showed that
$\|d(f_x)_0\|=|B(x)|\leq \tfrac{\sqrt{1+e^{2\chi}}}{\rho(x)^a}<\tfrac{1+e^{2\chi}}{\rho(x)^a}$.
By part (2) above, if $w\in R[10Q_\ve(x)]$ then
$\|d(f_x)_w\|\leq \ve\|w\|^{\beta/2}+\tfrac{1+e^{2\chi}}{\rho(x)^a}<\tfrac{2(1+e^{2\chi})}{\rho(x)^a}$,
since $\ve\|w\|^{\beta/2}<1<\tfrac{1+e^{2\chi}}{\rho(x)^a}$ for small $\ve>0$.
\end{proof}

\subsection{The overlap condition}\label{section-overlap}

We now want to change coordinates from $\Psi_x$ to $\Psi_y$ when $x,y$
are ``sufficiently close''. Even when $x$ and $y$ are very close, the behavior of $C_\chi(x)$ and $C_\chi(y)$
might differ, so we need to compare them.
%
We will eventually consider Pesin charts with different domains.

\medskip
\noindent
{\sc Pesin chart $\Psi_x^\eta$:} It is restriction of $\Psi_x$ to $R[\eta]$, where $0<\eta\leq Q_\ve(x)$.

\medskip
\noindent
{\sc $\ve$--overlap:} Two Pesin charts $\Psi_{x_1}^{\eta_1},\Psi_{x_2}^{\eta_2}$ are said to
{\em $\ve$--overlap} if $\tfrac{\eta_1}{\eta_2}=e^{\pm\ve}$ and if there is $x\in M$ s.t.
$x_1,x_2\in D_x$ and $d(x_1,x_2)+\|\widetilde{C_\chi(x_1)}-\widetilde{C_\chi(x_2)}\|<(\eta_1\eta_2)^4$.

\medskip
We write $\Psi_{x_1}^{\eta_1}\overset{\ve}{\approx}\Psi_{x_2}^{\eta_2}$.
We claim that if $\ve>0$ is small enough, then $\Psi_{x_1}^{\eta_1}\overset{\ve}{\approx}\Psi_{x_2}^{\eta_2}$
implies that $\Psi_{x_i}(R[10Q_\ve(x_i)])\subset D_{x_1}\cap D_{x_2}$
(and hence we can apply (A1)--(A3) without mentioning $x$). We prove the inclusion for $i=1$.
Start noting that, since $d(x_1,x_2)<\ve d(x_2,\mathfs D)$,
$d(x_1,\mathfs D)=d(x_2,\mathfs D)\pm d(x_1,x_2)=(1\pm\ve)d(x_2,\mathfs D)$.
By Lemma \ref{Lemma-Pesin-chart}(1),
$\Psi_{x_1}(R[10Q_\ve(x_1)])\subset B(x_1,40Q_\ve(x_1))$. This ball
is contained in $D_{x_1}$ since $40 Q_\ve(x_1)\ll40\ve^{3/\beta}\rho(x_1)^a<\mathfrak r(x_1)$.
We have
$$
\Psi_{x_1}(R[10Q_\ve(x_1)])\subset B(x_1,40 Q_\ve(x_1))\subset
B(x_2,40 Q_\ve(x_1)+d(x_1,x_2)).
$$
Since
$40Q_\ve(x_1)+d(x_1,x_2)\leq 40\ve^{3/\beta}(1+\ve)^ad(x_2,\mathfs D)^a+
d(x_2,\mathfs D)^a<2\mathfrak r(x_2)$ for small $\ve>0$, it follows that
$\Psi_{x_1}(R[10Q_\ve(x_1)])\subset D_{x_2}$.
The next proposition shows that $\ve$--overlap is strong enough to guarantee
that the Pesin charts are close.

\begin{proposition}\label{Lemma-overlap}
The following holds for $\ve>0$ small enough.
If $\Psi_{x_1}^{\eta_1}\overset{\ve}{\approx}\Psi_{x_2}^{\eta_2}$ then:
\begin{enumerate}[{\rm (1)}]
\item {\sc Control of $s,u$:}
$\frac{s(x_1)}{s(x_2)}=e^{\pm(\eta_1\eta_2)^3}$ and $\frac{u(x_1)}{u(x_2)}=e^{\pm(\eta_1\eta_2)^3}$.
\item {\sc Control of $\alpha$:} $\frac{|\sin\alpha(x_1)|}{|\sin\alpha(x_2)|}=e^{\pm(\eta_1\eta_2)^3}$.
\item {\sc Overlap:} $\Psi_{x_i}(R[e^{-2\ve}\eta_i])\subset \Psi_{x_j}(R[\eta_j])$ for $i,j=1,2$.
\item {\sc Change of coordinates:} For $i,j=1,2$, the map $\Psi_{x_i}^{-1}\circ\Psi_{x_j}$
is well-defined in $R[d(x_j,\mathfs D)^a]$,
and $\|\Psi_{x_i}^{-1}\circ\Psi_{x_j}-{\rm Id}\|_{1+\beta/2}<\ve(\eta_1\eta_2)^2$
where the norm is taken in $R[d(x_j, \mathfs{D})^{2a}]$.
\end{enumerate}
\end{proposition}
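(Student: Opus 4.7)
\textbf{Proof proposal for Proposition \ref{Lemma-overlap}.}

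The plan is to derive parts (1) and (2) directly from the overlap hypothesis by reading off $s(x_i), u(x_i), \sin\alpha(x_i)$ from the matrix $\widetilde{C_\chi(x_i)}$, to prove the main change-of-coordinates estimate in part (4) from the assumptions (A1)--(A4), and to deduce (3) from (4).

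For (1)--(2), note that since $\iota_x\circ P_{x_i,x}$ is an isometry, $\widetilde{C_\chi(x_i)}e_1$ and $\widetilde{C_\chi(x_i)}e_2$ are vectors of $\R^2$ of norms $1/s(x_i)$ and $1/u(x_i)$ whose angle equals $\alpha(x_i)$, whence $|\sin\alpha(x_i)|=s(x_i)u(x_i)|\det\widetilde{C_\chi(x_i)}|$. The overlap hypothesis $\|\widetilde{C_\chi(x_1)}-\widetilde{C_\chi(x_2)}\|<(\eta_1\eta_2)^4$ gives $|s(x_1)^{-1}-s(x_2)^{-1}|<(\eta_1\eta_2)^4$ and similarly for $u$, and by expanding the determinant one gets $||\sin\alpha(x_1)|/s_1 u_1-|\sin\alpha(x_2)|/s_2 u_2|\lesssim(\eta_1\eta_2)^4$. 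I then combine these additive estimates with the a priori bounds from the definition of $Q_\ve$, namely $\max\{s(x_i),u(x_i),|\sin\alpha(x_i)|^{-1}\}\leq\|C_\chi(x_i)^{-1}\|_{\rm Frob}\leq\sqrt2\,\ve^{1/8}Q_\ve(x_i)^{-\beta/24}\leq\sqrt2\,\ve^{1/8}\eta_i^{-\beta/24}$, to convert additive control into the desired multiplicative control. Since $\eta_1\asymp\eta_2$ and $\beta/24\ll 1$, the factors of $s,u,1/|\sin\alpha|$ are much smaller than $(\eta_1\eta_2)^{-1}$, so the product with $(\eta_1\eta_2)^4$ stays well inside $(\eta_1\eta_2)^3$.

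For (4), write $\psi:=\Psi_{x_i}^{-1}\circ\Psi_{x_j}=C_\chi(x_i)^{-1}\circ\Phi\circ C_\chi(x_j)$, where $\Phi:=\exp{x_i}^{-1}\circ\exp{x_j}$. Well-definedness on $R[d(x_j,\mathfs D)^a]$ follows from (A1)--(A2) and the comment after the definition of $\ve$-overlap. I split the estimate of $\psi-\mathrm{Id}$ into three contributions: (i) the constant $C_\chi(x_i)^{-1}(\exp{x_i}^{-1}(x_j))$, bounded by $2\|C_\chi(x_i)^{-1}\|\,d(x_1,x_2)\leq 2\|C_\chi(x_i)^{-1}\|(\eta_1\eta_2)^4$ via (A2); (ii) the linear part $C_\chi(x_i)^{-1}P_{x_j,x_i}C_\chi(x_j)-\mathrm{Id}$, which after using that parallel transport around the small triangle $x_i x_j x$ differs from the identity by a curvature-area correction negligible at our scale, reduces to $\widetilde{C_\chi(x_i)}^{-1}(\widetilde{C_\chi(x_j)}-\widetilde{C_\chi(x_i)})$, and is therefore bounded by $\|C_\chi(x_i)^{-1}\|(\eta_1\eta_2)^4$; (iii) the nonlinear remainder $\Phi(w)-\Phi(0)-P_{x_j,x_i}w$, whose size is controlled by the Lipschitz bound on $\widetilde{d(\exp{})}$ coming from (A3), producing a term of order $\rho(x_j)^{-a}\|w\|^2$ after composing with $C_\chi$. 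Using the bounds $\|C_\chi(x_i)^{-1}\|\leq\ve^{1/8}\eta_i^{-\beta/24}$ and $\rho(x_j)^{-a}\leq\ve^{1/24}\eta_j^{-\beta/72}$ built into the definition of $Q_\ve$, all three terms are $\ll\ve(\eta_1\eta_2)^2$.

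For the Hölder norm of $d\psi$, I differentiate $d\psi_v=C_\chi(x_i)^{-1}\circ d\Phi_{C_\chi(x_j)v}\circ C_\chi(x_j)$ and use that $d\Phi_w=d(\exp{x_i}^{-1})_{\exp{x_j}(w)}\circ d(\exp{x_j})_w$. Assumption (A3)' on $\tau$ and on $\widetilde{d\exp{}}$ furnishes a Lipschitz constant $\lesssim\rho^{-a}$ for $\widetilde{d\Phi}$, which after pre/post-composition with $C_\chi(x_j)$ and $C_\chi(x_i)^{-1}$ (both bounded by the $\|C_\chi\|$ estimates) and interpolation between the Lipschitz scale and the Hölder exponent $\beta/2$ on $R[d(x_j,\mathfs D)^{2a}]$ (this is where the doubling of the exponent to $2a$ in the statement is used, since it supplies a factor $d(x_j,\mathfs D)^{2a(1-\beta/2)}$ that absorbs the $\rho^{-a}$), yields $\Hol{\beta/2}(d\psi)<\ve(\eta_1\eta_2)^2$. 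Likewise the linear part at $v=0$ differs from $\mathrm{Id}$ by at most $\|C_\chi(x_i)^{-1}\|(\eta_1\eta_2)^4$ as in (ii), so combining with the Hölder bound gives $\|\psi-\mathrm{Id}\|_{1+\beta/2}<\ve(\eta_1\eta_2)^2$. Part (3) is then immediate: $\|\psi-\mathrm{Id}\|_0<\ve(\eta_1\eta_2)^2<\eta_j(1-e^{-\ve})$ because $e^{-2\ve}\eta_i\leq e^{-\ve}\eta_j$, so $\psi(R[e^{-2\ve}\eta_i])\subset R[\eta_j]$.

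The main obstacle is the Hölder estimate in step (4): the routine linearization is straightforward, but the regularity assumptions (A3)--(A4) only give Lipschitz bounds for $\widetilde{d\exp{}}$ and $\tau$ with constants that blow up like $d(x,\mathfs D)^{-a}$. The delicate bookkeeping is to verify that the powers of $\ve$ and $\eta$ appearing in the a priori bounds on $\|C_\chi^{\pm1}\|$ and $\rho^{-a}$ are large enough, after the interpolation on $R[d(x_j,\mathfs D)^{2a}]$, to dominate the Lipschitz-to-Hölder cost and still leave a factor $\ve(\eta_1\eta_2)^2$. This is exactly the purpose of the exponents $24/\beta$, $12/\beta$, $72a/\beta$ hard-wired into the definition of $\widetilde Q_\ve$.
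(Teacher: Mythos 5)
Your treatment of parts (1)--(3) is essentially sound. Part (1) is the paper's own argument (additive control of $s^{-1},u^{-1}$ from $\|\widetilde{C_\chi(x_1)}-\widetilde{C_\chi(x_2)}\|<(\eta_1\eta_2)^4$, upgraded to multiplicative control via the a priori bound on $\|C_\chi^{-1}\|$ built into $Q_\ve$). For (2) you read $|\sin\alpha|$ off the determinant $|\sin\alpha(x_i)|=s(x_i)u(x_i)|\det\widetilde{C_\chi(x_i)}|$, where the paper instead applies the angle--distortion inequality to $L=C_1C_2^{-1}$; both routes close. Deducing (3) from the $C^0$ part of (4) is also legitimate, since $R[e^{-2\ve}\eta_i]\subset R[d(x_i,\mathfs D)^a]$ and $\ve(\eta_1\eta_2)^2\ll(1-e^{-\ve})\eta_j$.

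The gap is in part (4), exactly at the step you identify as the crux, and it comes from your choice of decomposition. You Taylor-expand $\Phi={\rm exp}_{x_i}^{-1}\circ\,{\rm exp}_{x_j}$ at $0$ and bound the remainder $\Phi(w)-\Phi(0)-P_{x_j,x_i}w$ by ${\rm Lip}(d\Phi)\|w\|^2\lesssim\rho(x_j)^{-a}\|w\|^2$. On the domain $R[d(x_j,\mathfs D)^{2a}]$ one has $\|w\|$ up to $\sqrt2\,d(x_j,\mathfs D)^{2a}$, so this bound is of order $d(x_j,\mathfs D)^{3a}$ (times small powers of $\eta^{-1}$ from $\|C_\chi(x_i)^{-1}\|$ and $\rho^{-a}$). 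But $\eta_j\leq\delta_\ve Q_\ve(x_j)\leq\ve^{3/\beta}d(x_j,\mathfs D)^{72a/\beta}$, so the target $\ve(\eta_1\eta_2)^2\lesssim d(x_j,\mathfs D)^{288a/\beta}$ is an enormously higher power of $d(x_j,\mathfs D)$ than $3a$; the claimed ``all three terms are $\ll\ve(\eta_1\eta_2)^2$'' therefore fails for your term (iii). The term is in fact uniformly of order $d(x_1,x_2)\leq(\eta_1\eta_2)^4$, but to see this you must \emph{not} Taylor-expand: rewrite it as $\bigl[{\rm exp}_{x_i}^{-1}-P_{x_j,x_i}\circ{\rm exp}_{x_j}^{-1}\bigr]$ evaluated at the single manifold point ${\rm exp}_{x_j}(w)$ and apply (A2), so that the nonlinearities of the two exponentials cancel against each other. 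This is precisely the paper's decomposition via $\Theta={\rm exp}_{x_2}^{-1}-P_{x_1,x_2}\circ{\rm exp}_{x_1}^{-1}$ composed with $\Psi_{x_1}$.

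The same defect breaks your H\"older estimate, and here the missing ingredient is (A4). You take ${\rm Lip}(\widetilde{d\Phi})\lesssim\rho^{-a}$ from (A3) and interpolate; the factor $d(x_j,\mathfs D)^{2a(1-\beta/2)}$ only cancels the $\rho^{-a}$, leaving $\Hol{\beta/2}(d\psi)\lesssim\|C_\chi(x_i)^{-1}\|\,d(x_j,\mathfs D)^{a(1-\beta)}$, which is bounded below independently of $\eta$ (e.g.\ when $d(x_j,\mathfs D)$ is of order one) and is nowhere near $\ve(\eta_1\eta_2)^2$. The required smallness comes only from the \emph{second-difference} hypothesis (A4): the map $\tau(x_1,\cdot)-\tau(x_2,\cdot)$ has Lipschitz constant $\leq\rho^{-a}d(x_1,x_2)$, i.e.\ it carries the extra factor $d(x_1,x_2)\leq(\eta_1\eta_2)^4$ that makes the H\"older seminorm close. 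Your computation never forms this difference (indeed (A4) is never used), so the estimate cannot be completed along the route you describe.
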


\begin{proof} Assume $x_1,x_2\in D_x$, and let $C_i=\widetilde{C_\chi(x_i)}$.
By assumption, $d(x_1,x_2)+\|C_1-C_2\|<(\eta_1\eta_2)^4$. Note that $\Psi_{x_i}=\exp{x_i}\circ P_{x,x_i}\circ C_i$.

\medskip
\noindent
(1) We prove the estimate for $s$ (the calculation for $u$ is similar).
Since $\ve>0$ is small, it is enough to prove that $\left|\tfrac{s(x_1)}{s(x_2)}-1\right|<\ve^{3/\beta}(\eta_1\eta_2)^3$.
We have $s(x_i)^{-1}=\|C_\chi(x_i)e_1\|=\|C_ie_1\|$, hence
$|s(x_1)^{-1}-s(x_2)^{-1}|=|\|C_1e_1\|-\|C_2e_1\||\leq \|C_1-C_2\|<(\eta_1\eta_2)^4$.
Also
$s(x_1)=\|C_\chi(x_1)e_1\|^{-1}\leq \|C_\chi(x_1)^{-1}\|<\tfrac{\ve^{3/\beta}}{Q_\ve(x_1)}
<\tfrac{\ve^{3/\beta}}{\eta_1\eta_2}$,
therefore
$$
\left|\tfrac{s(x_1)}{s(x_2)}-1\right|=s(x_1)|s(x_1)^{-1}-s(x_2)^{-1}|<\ve^{3/\beta}(\eta_1\eta_2)^3.
$$

\medskip
\noindent
(2) We use the general inequality for an invertible linear transformation $L$:
\begin{equation}\label{gen-ineq-angles}
\frac{1}{\|L\|\|L^{-1}\|}\leq \frac{|\sin\angle(Lv,Lw)|}{|\sin\angle(v,w)|}\leq \|L\|\|L^{-1}\|.
\end{equation}
Apply this to $L=C_1C_2^{-1}$, $v=C_2e_1$, $w=C_2e_2$ to get that
$$
\frac{1}{\|C_1C_2^{-1}\|\|C_2C_1^{-1}\|}\leq \frac{\sin\alpha(x_1)}{\sin\alpha(x_2)}\leq \|C_1C_2^{-1}\|\|C_2C_1^{-1}\|.
$$
We have $\|C_1C_2^{-1}-{\rm Id}\|\leq\|C_1-C_2\|\|C_2^{-1}\|<\ve^{3/\beta}(\eta_1\eta_2)^3$,
and by symmetry $\|C_2C_1^{-1}-{\rm Id}\|<\ve^{3/\beta}(\eta_1\eta_2)^3$, therefore
$\|C_1C_2^{-1}\|\|C_2C_1^{-1}\|<[1+\ve^{3/\beta}(\eta_1\eta_2)^3]^2<e^{2\ve^{3/\beta}(\eta_1\eta_2)^3}<e^{(\eta_1\eta_2)^3}$.
The left hand side estimate is proved similarly.

\medskip
\noindent
(3) We prove that $\Psi_{x_1}(R[e^{-2\ve}\eta_1])\subset \Psi_{x_2}(R[\eta_2])$.
If $v\in R[e^{-2\ve}\eta_1]$ then
$\|C_\chi(x_1)v\|\leq \sqrt{2}e^{-2\ve}\eta_1<2\mathfrak r(x)$,
hence by (A1):
$$\Sas(C_\chi(x_1)v,C_\chi(x_2)v)\leq 2(d(x_1,x_2)+\|C_1v-C_2v\|)\leq 2(\eta_1\eta_2)^4.$$
By (A2), 
$d(\Psi_{x_1}(v),\Psi_{x_2}(v))\leq 4(\eta_1\eta_2)^4\Rightarrow\Psi_{x_1}(v)\in B(\Psi_{x_2}(v),4(\eta_1\eta_2)^4)$.
By Lemma \ref{Lemma-Pesin-chart}(1),
$B(\Psi_{x_2}(v),4(\eta_1\eta_2)^4)\subset \Psi_{x_2}(B)$ where
$B\subset \R^2$ is the ball with center $v$ and radius $8\|C_2^{-1}\|(\eta_1\eta_2)^4$,
hence it is enough that $B\subset R[\eta_2]$. If $w\in B$ then
$\|w\|_\infty\leq \|v\|_\infty+8\|C_2^{-1}\|(\eta_1\eta_2)^4\leq (e^{-\ve}+8\ve^{3/\beta})\eta_2<\eta_2$
for $\ve>0$ small enough.

\medskip
\noindent
(4) The proof that $\Psi_{x_2}^{-1}\circ \Psi_{x_1}$ is well-defined in
$R[d(x_1,\mathfs D)^a]$ is similar to the proof of (3). The only difference is in the last estimate:
if $\ve>0$ is small enough then for $w\in B$ it holds
\begin{align*}
&\|w\|\leq \|v\|+8\|C_2^{-1}\|(\eta_1\eta_2)^4\leq \sqrt{2}d(x_1,\mathfs D)^a+8(\eta_1\eta_2)^3\\
&\leq [\sqrt{2}(1+\ve)^a+8\ve^{3/\beta}]d(x_2,\mathfs D)^a<2\mathfrak r(x_2).
\end{align*}
Now:
\begin{align*}
&\Psi_{x_2}^{-1}\circ \Psi_{x_1}-{\rm Id}=C_2^{-1}\circ\exp{x_2}^{-1}\circ\exp{x_1}\circ C_1-{\rm Id}\\
&=[C_2^{-1}\circ P_{x_2,x}]\circ[\exp{x_2}^{-1}\circ\exp{x_1}-P_{x_1,x_2}]\circ [P_{x,x_1}\circ C_1]+C_2^{-1}(C_1-C_2)\\
&=[C_2^{-1}\circ P_{x_2,x}]\circ[\exp{x_2}^{-1}-P_{x_1,x_2}\circ\exp{x_1}^{-1}]\circ\Psi_{x_1}+C_2^{-1}(C_1-C_2).
\end{align*}
We calculate the $C^{1+\beta/2}$ norm of $[\exp{x_2}^{-1}-P_{x_1,x_2}\circ\exp{x_1}^{-1}]\circ\Psi_{x_1}$
in the domain $R[d(x_1, \mathfs{D})^{2a}]$.
By Lemma \ref{Lemma-Pesin-chart}(1), $\|d\Psi_{x_1}\|_0\leq 2$
and
$$
\Hol{\beta/2}(d\Psi_{x_1})\leq d(x_1,\mathfs D)^{-a}4d(x_1,\mathfs{D})^{2a(1-\beta/2)}=4d(x_1,\mathfs D)^{a(1-\beta)}
< 4.
$$
Call $\Theta:=\exp{x_2}^{-1}-P_{x_1,x_2}\circ\exp{x_1}^{-1}$. For $\ve>0$ small enough, inside $D_{x_1}$ we have:
\begin{enumerate}[$\circ$]
\item By (A2),
$\|\Theta(v)\|\leq \Sas(\exp{x_2}^{-1}(v),\exp{x_1}^{-1}(v))\leq 2d(x_1,x_2)\leq 2\ve^{6/\beta}(\eta_1\eta_2)^3$
thus $\|\Theta\circ \Psi_{x_1}\|_0<\ve^{2/\beta}(\eta_1\eta_2)^3$.
\item By (A3), $\|d\Theta_v\|=\|\tau(x_2,v)-\tau(x_1,v)\|\leq d(x_1,\mathfs D)^{-a}d(x_1,x_2)
<\ve^{3/\beta}(\eta_1\eta_2)^3$.
Hence $\|d\Theta\|_0<\ve^{3/\beta}(\eta_1\eta_2)^3$ and
$\|d(\Theta\circ\Psi_{x_1})\|_0\leq 2\ve^{3/\beta}(\eta_1\eta_2)^3<\ve^{2/\beta}(\eta_1\eta_2)^3$.
\item By (A4),
\begin{align*}
&\|\widetilde{d\Theta_v}-\widetilde{d\Theta_w}\|=\|[\tau(x_2,v)-\tau(x_1,v)]-[\tau(x_2,w)-\tau(x_1,w)]\|\\
&\leq d(x_1,\mathfs D)^{-a}d(x_1,x_2)\|v-w\|
\end{align*}
hence ${\rm Lip}(d\Theta)\leq d(x_1,\mathfs D)^{-a}d(x_1,x_2)$.
\item Using that
$$
\Hol{\beta/2}(d(\Theta_1\circ\Theta_2))\leq \|d\Theta_1\|_0\Hol{\beta/2}(d\Theta_2)+
{\rm Lip}(d\Theta_1)\|d\Theta_2\|_0^{2}4d(x_1,\mathfs{D})^{2a(1-\beta/2)}
$$
for $\Theta_2$ with domain $R[d(x_1,\mathfs{D})^{2a}]$, we get that
\begin{align*}
&\Hol{\beta/2}[d(\Theta\circ\Psi_{x_1})]\leq \|d\Theta\|_0\Hol{\beta/2}(d\Psi_{x_1})+
{\rm Lip}(d\Theta)\|d\Psi_{x_1}\|_0^2 4d(x_1,\mathfs{D})^{2a(1-\beta/2)}\\
&<4\ve^{3/\beta}(\eta_1\eta_2)^3+ 
d(x_1,\mathfs D)^{-a}d(x_1,x_2)16d(x_1,\mathfs{D})^{2a(1-\beta/2)}\\
&<4\ve^{3/\beta}(\eta_1\eta_2)^3+16\ve^{6/\beta}(\eta_1\eta_2)^3<\ve^{2/\beta}(\eta_1\eta_2)^3.
\end{align*}
\end{enumerate}
This implies that $\|\Theta\circ\Psi_{x_1}\|_{1+\beta/2}<3\ve^{2/\beta}(\eta_1\eta_2)^3$, hence
$$
\|C_2^{-1}\circ P_{x_2,x}\circ\Theta\circ\Psi_{x_1}\|_{1+\beta/2}\leq \|C_2^{-1}\|3\ve^{2/\beta}(\eta_1\eta_2)^3
\leq 3\ve^{2/\beta}(\eta_1\eta_2)^2.
$$
Thus
$\|\Psi_{x_2}^{-1}\circ \Psi_{x_1}-{\rm Id}\|_{1+\beta/2}\leq
3\ve^{2/\beta}(\eta_1\eta_2)^2+\|C_2^{-1}\|(\eta_1\eta_2)^4<
3\ve^{2/\beta}(\eta_1\eta_2)^2+\ve^{3/\beta}(\eta_1\eta_2)^3<4\ve^{2/\beta}(\eta_1\eta_2)^2<\ve(\eta_1\eta_2)^2$.
\end{proof}

\subsection{The map $f_{x,y}$}

Let $x,y\in{\rm NUH}_\chi$, and assume that $\Psi_{f(x)}^{\eta}\overset{\ve}{\approx}\Psi_y^{\eta'}$.
We want to change $\Psi_{f(x)}$ by $\Psi_y$ in $f_x$ and obtain a result
similar to Theorem \ref{Thm-non-linear-Pesin}.

\medskip
\noindent
{\sc The maps $f_{x,y}$ and $f_{x,y}^{-1}$:} If $\Psi_{f(x)}^{\eta}\overset{\ve}{\approx}\Psi_y^{\eta'}$,
define the map $f_{x,y}:=\Psi_y^{-1}\circ f\circ \Psi_x$.
If $\Psi_{x}^{\eta}\overset{\ve}{\approx}\Psi_{f^{-1}(y)}^{\eta'}$, define
$f_{x,y}^{-1}:=\Psi_x^{-1}\circ f^{-1}\circ \Psi_y$. 

\medskip
Any meaningful estimate of the regularity of $f_{x,y}$ in the $C^{1+\beta/2}$ norm cannot be better than
that of Theorem \ref{Thm-non-linear-Pesin}. In order to keep estimates of size $\ve$, we
consider the $C^{1+\beta/3}$ norm.

\begin{theorem}\label{Thm-non-linear-Pesin-2}
The following holds for all $\ve>0$ small enough:
If $x,y\in{\rm NUH}_\chi$ and $\Psi_{f(x)}^{\eta}\overset{\ve}{\approx}\Psi_{y}^{\eta'}$, then
$f_{x,y}$ is well-defined in $R[10Q_\ve(x)]$ and can be written as
$f_{x,y}(v_1,v_2)=(Av_1+h_1(v_1,v_2),Bv_2+h_2(v_1,v_2))$ where:
\begin{enumerate}[{\rm (a)}]
\item $|A|<e^{-\chi}$, $|B|>e^{\chi}$, cf. Lemma \ref{Lemma-linear-reduction}.
\item $\|h_i(0)\|<\ve\eta$, $\|\nabla h_i(0)\|<\ve\eta^{\beta/3}$, and
$\Hol{\beta/3}(\nabla h_i)<\ve$ where the norm is taken in $R[10Q_\ve(x)]$.
\end{enumerate}
If $\Psi_{x}^{\eta}\overset{\ve}{\approx}\Psi_{f^{-1}(y)}^{\eta'}$
then a similar statement holds for $f_{x,y}^{-1}$.
\end{theorem}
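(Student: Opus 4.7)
The plan is to factor $f_{x,y} = g \circ f_x$, where $g := \Psi_y^{-1} \circ \Psi_{f(x)}$ is the change of coordinates and $f_x := \Psi_{f(x)}^{-1} \circ f \circ \Psi_x$ is the Pesin-coordinate representation already analyzed in Theorem \ref{Thm-non-linear-Pesin}. The overlap hypothesis $\Psi_{f(x)}^\eta \overset{\ve}{\approx} \Psi_y^{\eta'}$ places us in the setting of Proposition \ref{Lemma-overlap}(4), giving $\|g - \mathrm{Id}\|_{1+\beta/2} < \ve(\eta\eta')^2$ on $R[d(f(x), \mathfs D)^{2a}]$. Well-definedness of $f_{x,y}$ on $R[10Q_\ve(x)]$ then follows because Theorem \ref{Thm-non-linear-Pesin}(3) bounds $\|f_x(v)\| \leq 2(1+e^{2\chi}) \rho(x)^{-a}\|v\|$, and $Q_\ve(x)^{\beta/72}\rho(x)^{-a} < \ve^{1/24}$ from the definition of $Q_\ve$ forces $f_x(R[10Q_\ve(x)])$ into this domain for small $\ve$.

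Setting $\Delta := g - \mathrm{Id}$, I write $f_{x,y}(v) = Lv + h^{(x)}(v) + \Delta(f_x(v))$, where $L = \mathrm{diag}(A,B)$ is the diagonal matrix of Theorem \ref{Thm-non-linear-Pesin}(1); this makes (a) immediate, and the new perturbation is $h := h^{(x)} + \Delta \circ f_x$. Since $f_x(0)=0$, $h^{(x)}(0)=0$ and $\nabla h^{(x)}(0)=0$, one has $h(0) = \Delta(0)$ and $\nabla h(0) = d\Delta_0 \circ L$. Combining $\|\Delta\|_0,\|d\Delta\|_0 \leq \ve(\eta\eta')^2$ with $\|L\| \leq 2(1+e^{2\chi})\rho(x)^{-a}$ yields $\|h(0)\| \leq \ve(\eta\eta')^2 < \ve\eta$ (since $\eta,\eta' \leq 1$) and $\|\nabla h(0)\| \leq 2(1+e^{2\chi})\ve(\eta\eta')^2\rho(x)^{-a} < \ve\eta^{\beta/3}$, the last inequality being an application of the $\rho(x)^{72a/\beta}$ factor in the definition of $Q_\ve(x)$ together with $\eta \leq Q_\ve(x)$.

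The main work, which I expect to be the principal obstacle, is the $\beta/3$-H\"older estimate for $\nabla h$. I expand
\begin{align*}
\nabla h(v) - \nabla h(w) &= [\nabla h^{(x)}(v) - \nabla h^{(x)}(w)] \\
&\quad + [d\Delta_{f_x(v)} - d\Delta_{f_x(w)}]\,d(f_x)_w \\
&\quad + d\Delta_{f_x(v)}\,[d(f_x)_v - d(f_x)_w].
\end{align*}
Theorem \ref{Thm-non-linear-Pesin}(2c) gives $\Hol{\beta/2}(\nabla h^{(x)}) < \ve$, and converting to a $\beta/3$-H\"older bound on $R[10Q_\ve(x)]$ costs only a harmless factor $(20Q_\ve(x))^{\beta/6}$. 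For the other two terms I combine $\|d\Delta\|_0,\Hol{\beta/2}(d\Delta) \leq \ve(\eta\eta')^2$ from Proposition \ref{Lemma-overlap}(4) with $\|d(f_x)\|_0 \leq 2(1+e^{2\chi})\rho(x)^{-a}$ and $\Hol{\beta/2}(d(f_x)) < \ve$ from Theorem \ref{Thm-non-linear-Pesin}; each resulting product takes the shape $\ve(\eta\eta')^2\rho(x)^{-ka}$ for $k \in \{1,\tfrac{3}{2}\}$, and the $\beta/3$-to-$\beta/2$ conversion inserts one further factor of $Q_\ve(x)^{\beta/6}$. All such factors are absorbed by $Q_\ve(x)^{\beta/72}\rho(x)^{-a} < \ve^{1/24}$ and $\eta \leq Q_\ve(x) \leq \ve^{3/\beta}$ built into the definition of $Q_\ve$, giving $\Hol{\beta/3}(\nabla h) < \ve$ for $\ve$ sufficiently small. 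The backward statement follows from the analogous factorization $f_{x,y}^{-1} = g' \circ (f_y)^{-1}$ with $g' := \Psi_x^{-1} \circ \Psi_{f^{-1}(y)}$.
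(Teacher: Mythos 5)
Your proposal follows the paper's proof in every essential step: factorize $f_{x,y}=g\circ f_x$ with $g:=\Psi_y^{-1}\circ\Psi_{f(x)}$, control $g-\mathrm{Id}$ in $C^{1+\beta/2}$ by Proposition~\ref{Lemma-overlap}(4) and $f_x$ by Theorem~\ref{Thm-non-linear-Pesin}, then expand $\nabla h$ by the chain rule and pay the $\beta/2\to\beta/3$ H\"older downgrade and the $\rho(x)^{-a}$ losses with powers of $Q_\ve$. One small slip to repair: you twice invoke ``$\eta\le Q_\ve(x)$'', but since $\Psi_{f(x)}^{\eta}$ is a Pesin chart at $f(x)$ (not at $x$) the hypothesis only gives $\eta\le Q_\ve(f(x))$; the factor $\rho(x)^{-a}$ in the $\|\nabla h(0)\|$ and $\Hol{\beta/3}(\nabla h)$ estimates therefore cannot be absorbed by $\eta^{\beta/72}$ alone as you suggest, and one must instead balance it against the $\ve^{3/\beta}$ and $\rho(x)^{72a/\beta}$ factors built into the definition of $Q_\ve(x)$, as the paper implicitly does.
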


\begin{proof}
We write $f_{x,y}=(\Psi_y^{-1}\circ\Psi_{f(x)})\circ f_x=:g\circ f_x$ and see it as a
small perturbation of $f_x$.
By Theorem \ref{Thm-non-linear-Pesin}(2--3),
$$
f_x(0)=0,\ \|d(f_x)\|_0< \tfrac{2(1+e^{2\chi})}{\rho(x)^a},\ \|d(f_x)_v-d(f_x)_w\|\leq \ve\|v-w\|^{\beta/2}
$$
for $v,w\in R[10Q_\ve(x)]$, where the $C^0$ norm is taken in $R[10Q_\ve(x)]$,
and by Proposition \ref{Lemma-overlap}(4) we have
$$
\|g-{\rm Id}\|<\ve(\eta\eta')^2,\ \|d(g-{\rm Id})\|_0<\ve(\eta\eta')^2,\ \|dg_v-dg_w\|\leq\ve(\eta\eta')^2\|v-w\|^{\beta/2}
$$
for $v,w\in R[d(f(x),\mathfs{D})^{2a}]$, where the $C^0$ norm is taken in this same domain.

\medskip
We first prove that $f_{x,y}$ is well-defined in $R[10Q_\ve(x)]$.
We have
$$
f_x(R[10Q_\ve(x)])\subset B(0,40(1+e^{2\chi})\rho(x)^{-a}Q_\ve(x))\subset R[d(f(x),\mathfs D)^{2a}]
$$
since
$40(1+e^{2\chi})\rho(x)^{-a}Q_\ve(x)<40(1+e^{2\chi})\ve^{3/\beta}d(f(x),\mathfs D)^{2a}<d(f(x),\mathfs D)^{2a}$
for $\ve>0$ small enough. By Proposition \ref{Lemma-overlap}(4), $f_{x,y}$ is well-defined.
 
\medskip
Now we prove (b). Let $h:=(h_1,h_2)=g\circ f_x-d(f_x)_0$.
Then $\|h(0)\|=\|g(0)\|<\ve(\eta\eta')^2<\ve\eta$
and for $\ve>0$ small enough:
\begin{align*}
&\|\nabla h(0)\|\leq \|dg_0\circ d(f_x)_0-d(f_x)_0\|\leq \|d(g-{\rm Id})_0\|\|d(f_x)_0\|\\
&<\ve(\eta\eta')^2 2(1+e^{2\chi})\rho(x)^{-a}<\ve\eta\eta' 2\ve^{3/\beta}(1+e^{2\chi})<\ve\eta^{\beta/3}.
\end{align*}
Finally, since $f_x(R[10Q_\ve(x)])\subset R[d(f(x),\mathfs D)^{2a}]$, if $\ve>0$ is small enough then
for all $v,w\in R[10Q_\ve(x)]$ it holds:
\begin{align*}
&\|dh_v-dh_w\|=\|dg_{f_x(v)}\circ d(f_x)_v-dg_{f_x(w)}\circ d(f_x)_w\|\\
&\leq \|dg_{f_x(v)}-dg_{f_x(w)}\|\|d(f_x)_v\|+\|dg_{f_x(w)}\|\|d(f_x)_v-d(f_x)_w\|\\
&\leq \ve(\eta\eta')^2\|f_x(v)-f_x(w)\|^{\beta/2}\|d(f_x)\|_0+\ve\|dg\|_0\|v-w\|^{\beta/2}\\
&\leq (\ve(\eta\eta')^2\|d(f_x)\|_0^{1+\beta/2}+40\ve\|dg\|_0Q_\ve(x)^{\beta/6})\|v-w\|^{\beta/3}\\
&\leq (4\eta^2(1+e^{2\chi})^2\rho(x)^{-2a}+ 80Q_\ve(x)^{\beta/6})\ve\|v-w\|^{\beta/3}\\
&\leq (4\ve^{6/\beta}(1+e^{2\chi})^2+ 80\ve^{1/2})\ve\|v-w\|^{\beta/3}<\ve\|v-w\|^{\beta/3}.
\end{align*}
\end{proof}

\section{Double charts and the graph transform method}

We now define $\ve$--double charts.
For $\ve>0$ small, define $\delta_\ve:=e^{-\ve n}\in I_\ve$ where $n$ is the unique positive integer s.t.
$e^{-\ve n}<\ve\leq e^{-\ve(n-1)}$. In particular, $\delta_\ve<\ve$.

\medskip
\noindent
{\sc $\ve$--double chart:} An {\em $\ve$--double chart} is a pair of Pesin charts
$\Psi_x^{p^s,p^u}=(\Psi_x^{p^s},\Psi_x^{p^u})$ where $p^s,p^u\in I_\ve$
with $0<p^s,p^u\leq \delta_\ve Q_\ve(x)$.

\medskip
The parameters $p^s/p^u$ control the local forward/backward hyperbolicity at $x$.
They are a way of separating the future and past dynamics. This will be better explained
below, when we introduce the parameters $q_\ve,q_\ve^s,q_\ve^u$.

\medskip
\noindent
{\sc Edge $v\overset{\ve}{\rightarrow}w$:} Given $\ve$--double charts $v=\Psi_x^{p^s,p^u}$
and $w=\Psi_y^{q^s,q^u}$, we draw an edge from $v$ to $w$ if the following conditions are
satisfied:
\begin{enumerate}[iii\,]
\item[(GPO1)] $\Psi_{f(x)}^{q^s\wedge q^u}\overset{\ve}{\approx}\Psi_y^{q^s\wedge q^u}$
and $\Psi_{f^{-1}(y)}^{p^s\wedge p^u}\overset{\ve}{\approx}\Psi_x^{p^s\wedge p^u}$.
\item[(GPO2)] $p^s=\min\{e^\ve q^s,\delta_\ve Q_\ve(x)\}$ and $q^u=\min\{e^\ve p^u,\delta_\ve Q_\ve(y)\}$.
\end{enumerate}

\medskip
(GPO1) allows to pass from an $\ve$--double chart at $x$
to an $\ve$--double chart at $y$ and vice-versa. (GPO2) is a greedy recursion
that implies that the local hyperbolicity parameters are the largest as possible.
It implies that $\tfrac{p^s\wedge p^u}{q^s\wedge q^u}=e^{\pm\ve}$.
(GPO2) will be crucial in the proof of the inverse theorem (Theorem \ref{Thm-inverse}).

\medskip
\noindent
{\sc $\ve$--generalized pseudo-orbit ($\ve$--gpo):} An {\em $\ve$--generalized pseudo-orbit ($\ve$--gpo)}
is a sequence $\un{v}=\{v_n\}_{n\in\Z}$ of $\ve$--double charts
s.t. $v_n\overset{\ve}{\rightarrow}v_{n+1}$ for all $n\in\Z$.

\subsection{The parameters $q_\ve(x),q_\ve^s(x),q_\ve^u(x)$}

A transition between Pesin charts only makes sense if their sizes $\eta,\eta'$ satisfy $\tfrac{\eta}{\eta'}=e^{\pm\ve}$
(see Theorem \ref{Thm-non-linear-Pesin-2}). Since the ratio $\tfrac{Q_\ve(f(x))}{Q_\ve(x)}$ 
might be different from $e^{\pm\ve}$, we introduce the parameter $q_\ve(x)$ below.

\medskip
\noindent
{\sc Parameter $q_\ve(x)$:} For $x\in{\rm NUH}_\chi^*$, let
$q_\ve(x):=\delta_\ve\min\{e^{\ve|n|}Q_\ve(f^n(x)):n\in\Z\}$.

\medskip
The above minimum is the greedy way of defining values in $I_\ve$ smaller than $\ve Q_\ve$
with the required regularity property.

\begin{lemma}\label{Lemma-q}
For all $x\in{\rm NUH}_\chi^*$, $0<q_\ve(x)<\ve Q_\ve(x)$
and $\tfrac{q_\ve(f(x))}{q_\ve(x)}=e^{\pm\ve}$.
\end{lemma}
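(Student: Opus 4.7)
The plan is to treat the three assertions in order, each following from a short routine argument.

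First, I would show $q_\ve(x)$ is a strictly positive real number by checking that the infimum defining it is actually attained. The Temperedness Lemma gives $\lim_{n\to\pm\infty}\tfrac{1}{|n|}\log Q_\ve(f^n(x))=0$ for $x\in{\rm NUH}_\chi^*$, so in particular $\tfrac{1}{|n|}\log Q_\ve(f^n(x))>-\tfrac{\ve}{2}$ for all $|n|$ sufficiently large. Hence $e^{\ve|n|}Q_\ve(f^n(x))\geq e^{\ve|n|/2}\to\infty$ as $|n|\to\infty$, which means the minimum is attained at some finite index and $q_\ve(x)>0$.

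Second, the bound $q_\ve(x)<\ve Q_\ve(x)$ is immediate: evaluating at $n=0$ gives $q_\ve(x)\leq \delta_\ve Q_\ve(x)$, and by the definition of $\delta_\ve$ we have $\delta_\ve<\ve$.

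Third, for the multiplicative comparison I would reindex. Writing
\[
q_\ve(f(x))=\delta_\ve\min_{n\in\Z}e^{\ve|n|}Q_\ve(f^{n+1}(x))=\delta_\ve\min_{m\in\Z}e^{\ve|m-1|}Q_\ve(f^m(x)),
\]
and using the elementary inequality $\bigl||m-1|-|m|\bigr|\leq 1$ valid for every $m\in\Z$, we obtain $e^{\ve|m-1|}=e^{\pm\ve}e^{\ve|m|}$ term by term. Taking the minimum over $m$ preserves this two-sided bound, giving $q_\ve(f(x))=e^{\pm\ve}q_\ve(x)$, i.e. $\tfrac{q_\ve(f(x))}{q_\ve(x)}=e^{\pm\ve}$.

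No step is delicate; if anything the only thing to be careful about is invoking the Temperedness Lemma in the correct form (along both directions $n\to\pm\infty$) to ensure the min over $\Z$ is a genuine minimum rather than a possibly-zero infimum.
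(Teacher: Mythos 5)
Your proof is correct and follows essentially the same route as the paper: positivity via the Temperedness Lemma (you spell out, slightly more explicitly than the paper, that the terms $e^{\ve|n|}Q_\ve(f^n(x))$ diverge so the minimum is attained), the bound $q_\ve(x)<\ve Q_\ve(x)$ by evaluating at $n=0$ and using $\delta_\ve<\ve$, and the multiplicative estimate by reindexing and the inequality $\bigl||m-1|-|m|\bigr|\le 1$, which is exactly the paper's pair of one-sided comparisons. No gaps.
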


\begin{proof}
By Lemma \ref{Lemma-temperedness}, $\inf\{e^{\ve|n|}Q_\ve(f^n(x)):n\in\Z\}>0$.
Since zero is the only accumulation point of $I_\ve$,
$q_\ve(x)$ is well-defined and positive.
It is clear that $q_\ve(x)\leq \delta_\ve Q_\ve(x)<\ve Q_\ve(x)$. Since
$$
\min\{e^{\ve|n|}Q_\ve(f^{n+1}(x)):n\in\Z\}\leq e^{\ve}\min\{e^{\ve|n+1|}Q_\ve(f^{n+1}(x)):n\in\Z\},
$$
we have $q_\ve(f(x))\leq e^{\ve}q_\ve(x)$. Reversely,
$$
 e^{-\ve}\min\{e^{\ve|n+1|}Q_\ve(f^{n+1}(x)):n\in\Z\}\leq \min\{e^{\ve|n|}Q_\ve(f^{n+1}(x)):n\in\Z\}
$$
therefore $e^{-\ve}q_\ve(x)\leq q_\ve(f(x))$.
\end{proof}

We want to separate the dependence of $q_\ve(x)$
on the future from its dependence on the past, hence we define the one-sided versions of $q_\ve(x)$.

\medskip
\noindent
{\sc Parameters $q_\ve^s(x),q_\ve^u(x)$:}  For $x\in{\rm NUH}_\chi^*$, define
\begin{align*}
q_\ve^s(x)&:=\delta_\ve\min\{e^{\ve|n|}Q_\ve(f^n(x)):n\geq 0\}\\
q_\ve^u(x)&:=\delta_\ve\min\{e^{\ve|n|}Q_\ve(f^n(x)):n\leq 0\}.
\end{align*}

\begin{lemma}\label{Lemma-q^s}
For all $x\in{\rm NUH}_\chi^*$, the following holds:
\begin{enumerate}[{\rm (1)}]
\item {\sc Good definition:} $0<q_\ve^s(x),q_\ve^u(x)<\ve Q_\ve(x)$ and $q_\ve^s(x)\wedge q_\ve^u(x)=q_\ve(x)$.
\item {\sc Greedy algorithm:} For all $n\in\Z$ it holds
\begin{align*}
q_\ve^s(f^n(x))&=\min\{e^\ve q_\ve^s(f^{n+1}(x)),\delta_\ve Q_\ve(f^n(x))\}\\
q_\ve^u(f^n(x))&=\min\{e^\ve q_\ve^u(f^{n-1}(x)),\delta_\ve Q_\ve(f^n(x))\}.
\end{align*}
\end{enumerate}
\end{lemma}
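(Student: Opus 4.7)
The proof will closely follow the style of Lemma \ref{Lemma-q}. I would break it into two natural parts corresponding to claims (1) and (2), and the work is purely combinatorial since we already have the temperedness estimate from Lemma \ref{Lemma-temperedness} in hand.

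For part (1), the plan is to use the temperedness lemma to guarantee that the relevant infima are attained. Since $x\in{\rm NUH}_\chi^*$, Lemma \ref{Lemma-temperedness} gives $\lim_{n\to\pm\infty}\tfrac{1}{|n|}\log Q_\ve(f^n(x))=0$, so $e^{\ve|n|}Q_\ve(f^n(x))\to\infty$ both as $n\to+\infty$ and as $n\to-\infty$; hence both $\min_{n\ge 0}$ and $\min_{n\le 0}$ are achieved at finitely many indices and are strictly positive. Positivity of $q_\ve^s(x)$ and $q_\ve^u(x)$ follows. The upper bound $q_\ve^s(x),q_\ve^u(x)<\ve Q_\ve(x)$ comes from taking the index $n=0$ (which is admissible in both definitions) and using $\delta_\ve<\ve$. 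The identity $q_\ve^s(x)\wedge q_\ve^u(x)=q_\ve(x)$ is immediate from $\Z=\{n\geq 0\}\cup\{n\leq 0\}$ and the fact that the minimum of a function over a union is the minimum of the minima over the two subsets.

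For part (2), I would argue by a one-line index shift. Writing $y=f^n(x)$, one has
\[
q_\ve^s(y)=\delta_\ve\min_{k\geq 0}e^{\ve k}Q_\ve(f^{k+n}(x)).
\]
Isolate the $k=0$ term, which contributes $\delta_\ve Q_\ve(f^n(x))$. For $k\geq 1$, substitute $j=k-1\geq 0$ to obtain $e^{\ve k}Q_\ve(f^{k+n}(x))=e^\ve\cdot e^{\ve j}Q_\ve(f^{j+(n+1)}(x))$, so the partial minimum over $k\geq 1$ equals $e^\ve\delta_\ve^{-1}q_\ve^s(f^{n+1}(x))$. Combining both cases gives exactly
\[
q_\ve^s(f^n(x))=\min\{\delta_\ve Q_\ve(f^n(x)),\,e^\ve q_\ve^s(f^{n+1}(x))\},
\]
which is the claimed recursion. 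The proof for $q_\ve^u$ is entirely symmetric: the $k=0$ term again yields $\delta_\ve Q_\ve(f^n(x))$, and the shift $j=k+1$ (with $k\leq -1$) expresses the remaining minimum as $e^\ve q_\ve^u(f^{n-1}(x))$.

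There is no real obstacle here; the only subtlety worth a sentence in the write-up is the use of Lemma \ref{Lemma-temperedness} to ensure the infima are attained, since otherwise $q_\ve^s(x)$ might in principle fail to lie in $I_\ve$ or to be positive. Once that is noted, everything reduces to rewriting the defining minimum via the index shift, which matches the proof of Lemma \ref{Lemma-q} almost verbatim.
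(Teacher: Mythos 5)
Your proposal is correct and follows essentially the same route as the paper: part (1) is handled exactly as in Lemma \ref{Lemma-q} (temperedness guarantees the minima are attained and positive, the $n=0$ term gives the bound $\delta_\ve Q_\ve(x)<\ve Q_\ve(x)$, and the wedge identity is immediate), and part (2) is the same splitting of the minimum into the $k=0$ term and the shifted minimum over $k\geq 1$. No issues.
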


\begin{proof}
As in the proof of Lemma \ref{Lemma-q}, $q^s_\ve(x)$ and $q^u_\ve(x)$ are well-defined and positive,
and by definition $q_\ve^s(x)\wedge q_\ve^u(x)=q_\ve(x)$. This proves (1).
We prove the first equality in (2): for a fixed $n\in\Z$ we have
\begin{align*}
&q_\ve^s(f^n(x))=\delta_\ve\min\{e^{\ve|m|}Q_\ve(f^m(f^n(x))):m\geq 0\}\\
&=\min\{\delta_\ve \min\{e^{\ve|m|}Q_\ve(f^{m+n}(x)):m\geq 1\},\delta_\ve Q_\ve(f^n(x))\}\\
&=\min\{e^\ve\delta_\ve\min\{e^{\ve|m|}Q_\ve(f^m(f^{n+1}(x))):m\geq 0\},\delta_\ve Q_\ve(f^n(x))\}\\
&=\min\{e^\ve q_\ve^s(f^{n+1}(x)),\delta_\ve Q_\ve(f^n(x))\}.
\end{align*}
The second equality is proved similarly.
\end{proof}

\medskip
\noindent
{\sc The set ${\rm NUH}_\chi^\#$:} It is the set of $x\in{\rm NUH}_\chi^*$ s.t.
$$
\limsup_{n\to\infty}q_\ve^s(f^n(x))>0\text{ and }\limsup_{n\to-\infty}q_\ve^u(f^n(x))>0.
$$

\subsection{ The graph transform method}

Let $v=\Psi_x^{p^s,p^u}$ be an $\ve$--double chart.

\medskip
\noindent
{\sc Admissible manifolds:} We define an {\em $s$--admissible manifold at $v$} as a set
of the form $\Psi_x\{(t,F(t)):|t|\leq p^s\}$ where $F:[-p^s,p^s]\to\R$ is a $C^{1+\beta/3}$ function
s.t.:
\begin{enumerate}
\item[(AM1)] $|F(0)|\leq 10^{-3}(p^s\wedge p^u)$.
\item[(AM2)] $|F'(0)|\leq \tfrac{1}{2}(p^s\wedge p^u)^{\beta/3}$.
\item[(AM3)] $\|F'\|_0+\Hol{\beta/3}(F')\leq\tfrac{1}{2}$ where the norms are taken in $[-p^s,p^s]$.
\end{enumerate}
Similarly, a {\em $u$--admissible manifold at $v$} is a set
of the form $\Psi_x\{(G(t),t):|t|\leq p^u\}$ where $G:[-p^u,p^u]\to\R$ is a $C^{1+\beta/3}$ function
satisfying (AM1)--(AM3), where the norms are taken in $[-p^u,p^u]$.

\medskip
The functions $F,G$ are called the {\em representing functions}.
We let $\mathfs M^s(v)$ (resp. $\mathfs M^u(v)$) denote the set of all $s$--admissible
(resp. $u$--admissible) manifolds at $v$.

\begin{lemma}\label{Lemma-admissible-manifolds}
The following holds for $\ve>0$ small enough. If $v=\Psi_x^{p^s,p^u}$ is an $\ve$--double chart, then for
every $V^s\in\mathfs M^s(v)$ and $V^u\in\mathfs M^u(v)$ it holds:
\begin{enumerate}[{\rm (1)}]
\item $V^s$ and $V^u$ intersect at a single point $P=\Psi_x(w)$, and $\|w\|_\infty<10^{-2}(p^s\wedge p^u)$.
\item $\tfrac{\sin\angle(V^s,V^u)}{\sin\alpha(x)}=e^{\pm(p^s\wedge p^u)^{\beta/4}}$
and $|\cos\angle(V^s,V^u)-\cos\alpha(x)|<2(p^s\wedge p^u)^{\beta/4}$, where
$\angle(V^s,V^u)=$ angle of intersection of the tangents to $V^s$ and $V^u$ at $P$.
\end{enumerate}
\end{lemma}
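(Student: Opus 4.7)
The plan is to work entirely in the Pesin chart coordinates, identifying $V^s$ with $\{(t,F(t)):|t|\leq p^s\}$ and $V^u$ with $\{(G(t),t):|t|\leq p^u\}$, so that the points of $V^s\cap V^u$ correspond to fixed points of $\Phi(t_1,t_2):=(G(t_2),F(t_1))$. For (1) I would apply Banach's fixed point theorem: since $\|F'\|_0,\|G'\|_0\leq\tfrac{1}{2}$ by (AM3), $\Phi$ is a $\tfrac{1}{2}$-contraction in the $\ell^\infty$ norm. The mild subtlety is that $F$ and $G$ have different natural domains, so I would split into the cases $p^s\leq p^u$ and $p^u<p^s$ and check that $\Phi$ preserves the square $R[r]$ with $r:=10^{-2}(p^s\wedge p^u)$, using (AM1) to bound $|\Phi(0)|$ and (AM3) to propagate the contraction. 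Uniqueness in the full rectangle $[-p^s,p^s]\times[-p^u,p^u]$ is immediate from the global $1/2$--Lipschitz bound on $\Phi$. The bound $\|w\|_\infty<10^{-2}(p^s\wedge p^u)$ then falls out of $|t_1^*|\leq |G(0)|+\tfrac{1}{2}|F(0)|+\tfrac{1}{4}|t_1^*|$ and the analogous estimate for $|t_2^*|$.

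For (2) I would compute the tangent vectors at $P=\Psi_x(w)$, namely $\xi^s:=d(\Psi_x)_w\,u^s$ and $\xi^u:=d(\Psi_x)_w\,u^u$ with $u^s:=(1,F'(t_1^*))$ and $u^u:=(G'(t_2^*),1)$. Conditions (AM2)--(AM3) together with $\|w\|<10^{-2}(p^s\wedge p^u)$ yield $\|u^s-e_1\|,\|u^u-e_2\|\leq (p^s\wedge p^u)^{\beta/3}$. Lemma \ref{Lemma-Pesin-chart}(2) gives $\|\widetilde{d(\Psi_x)_w}-\widetilde{C_\chi(x)}\|\leq d(x,\mathfs D)^{-a}\|w\|$; combined with $(p^s\wedge p^u)\leq Q_\ve(x)$ and the displayed bound $d(x,\mathfs D)^{-a}Q_\ve(x)^{\beta/72}<\ve^{1/24}$ that follows the definition of $Q_\ve(x)$, this is smaller than $(p^s\wedge p^u)^{\beta/4}$ once $\ve$ is small. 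Hence $\xi^s,\xi^u$ are close in both direction and length to $C_\chi(x)e_1=e^s_x/s(x)$ and $C_\chi(x)e_2=e^u_x/u(x)$ respectively, whose angle is exactly $\alpha(x)$.

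To conclude I would apply the identity $\sin\angle(Lv_1,Lv_2)=|\det L|\,\|v_1\|\|v_2\|\sin\angle(v_1,v_2)/(\|Lv_1\|\|Lv_2\|)$ twice: once with $L=\widetilde{d(\Psi_x)_w},\,v_1=u^s,\,v_2=u^u$, and once with $L=\widetilde{C_\chi(x)},\,v_1=e_1,\,v_2=e_2$ (so that $\sin\alpha(x)=|\det C_\chi(x)|/(\|C_\chi(x)e_1\|\|C_\chi(x)e_2\|)$), and then take the ratio. Each factor in the ratio is $1+O((p^s\wedge p^u)^{\beta/4})$ by the previous paragraph, and the product of boundedly many such factors yields $e^{\pm(p^s\wedge p^u)^{\beta/4}}$ for $\ve$ small. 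The cosine inequality is obtained by expanding $\cos\angle(\xi^s,\xi^u)-\cos\alpha(x)$ directly as a difference of normalized inner products and controlling each summand by the same closeness estimates. The main obstacle is this last angle estimate: one must thread the perturbation of $\widetilde{d(\Psi_x)_w}$ away from $\widetilde{C_\chi(x)}$---which a priori carries the blow-up factor $d(x,\mathfs D)^{-a}$---through the non-linear sine/cosine formulas and still obtain an error of size $(p^s\wedge p^u)^{\beta/4}$. The key point is that the definition of $Q_\ve(x)$ is tuned precisely so that $d(x,\mathfs D)^{-a}(p^s\wedge p^u)$ remains a high power of $(p^s\wedge p^u)$; the fixed-point argument in (1) is, by comparison, entirely routine.
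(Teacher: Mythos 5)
Your proposal is correct and follows essentially the same route as the paper, which itself defers to Sarig's argument: a Banach fixed-point argument for the intersection point, and the determinant identity $\sin\angle(Lv_1,Lv_2)=|\det L|\,\|v_1\|\|v_2\|\sin\angle(v_1,v_2)/(\|Lv_1\|\|Lv_2\|)$ applied factor by factor for the angle, with the blow-up $d(x,\mathfs D)^{-a}$ coming from (A3)/Lemma \ref{Lemma-Pesin-chart}(2) absorbed by the $\rho(x)^{72a/\beta}$ term in $Q_\ve(x)$ (and, for the length-ratio factor, by the $\|C_\chi(x)^{-1}\|^{-24/\beta}$ term as well). The paper's appendix records exactly this: it substitutes Sarig's constants $K_1,K_2$ by $4d(x,\mathfs D)^{-a}$ and $d(x,\mathfs D)^{-a}$ and checks $K_i\eta<\eta^{2\beta/3}$, which is the same cancellation you identify.
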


When $M$ is compact and $f$ is a $C^{1+\beta}$ diffeomorphism, this is \cite[Prop. 4.11]{Sarig-JAMS}.
The same proof works almost verbatim, see the appendix for the necessary adaptations.

\medskip
Let $v=\Psi_x^{p^s,p^u}$, $w=\Psi_y^{q^s,q^u}$ be $\ve$--double charts with
$v\overset{\ve}{\rightarrow}w$. We now define the {\em graph transforms}: these are two maps
that work in different directions of the edge $v\overset{\ve}{\rightarrow}w$, one of them
sends $u$--admissible manifolds at $v$ to $u$--admissible manifolds at $w$, the other
sends $s$--admissible manifolds at $w$ to $s$--admissible manifolds at $v$. 

\medskip
\noindent
{\sc Graph transforms $\mathfs F_{v,w}^s$ and $\mathfs F_{v,w}^u$:} The {\em graph transform}
$\mathfs F_{v,w}^s:\mathfs M^s(w)\to\mathfs M^s(v)$ is the map that sends
an $s$--admissible manifold at $w$ with representing function $F:[-q^s,q^s]\to\R$ to the unique
$s$--admissible  manifold at $v$ with representing function $G:[-p^s,p^s]\to\R$ s.t.
$\{(t,G(t)):|t|\leq p^s\}\subset f_{x,y}^{-1}\{(t,F(t)):|t|\leq q^s\}$.  
Similarly, the {\em graph transform} $\mathfs F_{v,w}^u:\mathfs M^u(v)\to\mathfs M^u(w)$
is the map sending a $u$--admissible manifold at $v$ with representing function $F:[-p^u,p^u]\to\R$
to the unique $u$--admissible  manifold at $w$ with representing function $G:[-q^u,q^u]\to\R$ s.t.
$\{(G(t),t):|t|\leq q^u\}\subset f_{x,y}\{(F(t),t):|t|\leq p^u\}$.

\medskip
In other words, the representing functions of $s,u$--admissible manifolds
change by the application of $f_{x,y}^{-1},f_{x,y}$ respectively.
For $V_1,V_2\in\mathfs M^s(v)$ with representing functions
$F_1,F_2$ and for $i\geq 0$, define $ d_{C^i}(V_1,V_2):=\|F_1-F_2\|_i$ where the
norm is taken in $[-p^s,p^s]$. A similar definition
holds in $\mathfs M^u(v)$.

\begin{proposition}\label{Prop-graph-transform}
The following holds for $\ve>0$ small enough. If $v\overset{\ve}{\rightarrow}w$ then
$\mathfs F_{v,w}^s$ and $\mathfs F_{v,w}^u$ are well-defined. Furthermore, if
$V_1,V_2\in \mathfs M^u(v)$ then:
\begin{enumerate}[{\rm (1)}]
\item $ d_{C^0}(\mathfs F_{v,w}^u(V_1),\mathfs F_{v,w}^u(V_2))\leq e^{-\chi/2} d_{C^0}(V_1,V_2)$.
\item $ d_{C^1}(\mathfs F_{v,w}^u(V_1),\mathfs F_{v,w}^u(V_2))\leq
e^{-\chi/2}( d_{C^1}(V_1,V_2)+ d_{C^0}(V_1,V_2)^{\beta/3})$.
\item $f(V_i)$ intersects every element of $\mathfs M^u(w)$ at exactly one point.
\end{enumerate}
An analogous statement holds for $\mathfs F_{v,w}^s$.
\end{proposition}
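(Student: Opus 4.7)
The plan is to follow the classical Hadamard--Perron graph transform argument, adapted to our non-compact, discontinuous setting as in \cite[\S4]{Sarig-JAMS}. The essential inputs are Theorem \ref{Thm-non-linear-Pesin-2}, which expresses $f_{x,y}$ on $R[10Q_\ve(x)]$ as $f_{x,y}(v_1,v_2)=(Av_1+h_1,Bv_2+h_2)$ with $|A|<e^{-\chi}$, $|B|>e^\chi$, and $h_i$ of size $\ve$ in the $C^{1+\beta/3}$ norm, together with the greedy recursion (GPO2) matching the sizes of the charts across the edge $v\to w$.

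For well-definedness of $\mathfs F_{v,w}^u$, I would start with $V\in\mathfs M^u(v)$ represented by $F:[-p^u,p^u]\to\R$ and set
\[
X(t):=AF(t)+h_1(F(t),t),\qquad Y(t):=Bt+h_2(F(t),t).
\]
First I would check that $Y'(t)=B+O(\ve)$ using the derivative bounds on $h_2$ and $\|F'\|_0\leq\tfrac12$ from (AM3), so that $Y$ is a diffeomorphism from $[-p^u,p^u]$ onto an interval that strictly contains $[-q^u,q^u]$: for this one uses $|B|>e^\chi$ against $q^u\leq e^\ve p^u$ from (GPO2), leaving a definite margin. Then set $G(s):=X(t(s))$ where $Y(t(s))=s$. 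Verifying (AM1)--(AM3) for $G$ is then a direct computation: (AM1) follows from $|t(0)|\lesssim \ve\eta/|B|$ (using $\|h_2(0)\|<\ve\eta$ from Theorem \ref{Thm-non-linear-Pesin-2}, where $\eta=q^s\wedge q^u$ is the overlap size from (GPO1)) combined with (AM1) for $F$; (AM2) from $G'(0)=X'(t(0))/Y'(t(0))\leq(|A|/|B|)|F'(t(0))|+O(\ve)$; and (AM3) from differentiating once more and using the $\Hol{\beta/3}$ bounds on $\nabla h_i$ together with the Hölder modulus of $F'$.

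The contraction estimates (1) and (2) are the core. Given $V_1,V_2\in\mathfs M^u(v)$ with representing functions $F_i$ and corresponding $t_i(s)$, the identities $G_i(s)=AF_i(t_i(s))+h_1(F_i(t_i(s)),t_i(s))$ and $B(t_1-t_2)=h_2(F_2(t_2),t_2)-h_2(F_1(t_1),t_1)$, combined with $|B|>e^\chi$ and $\|\nabla h_2\|_0<\ve\eta^{\beta/3}$, yield $|t_1(s)-t_2(s)|\leq (\ve/|B|)\,d_{C^0}(V_1,V_2)$. Substituting back gives $|G_1(s)-G_2(s)|\leq (|A|+O(\ve))\,d_{C^0}(V_1,V_2)$, which for $\ve$ small enough is at most $e^{-\chi/2}\,d_{C^0}(V_1,V_2)$; this is (1). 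For (2), I would differentiate, apply the implicit function theorem to the relation $Y_i(t_i(s))=s$, and isolate the leading term $AF_i'(t_i)/Y_i'(t_i)$; the difference $G_1'-G_2'$ then decomposes into a contracting part bounded by $(|A|/|B|)\,d_{C^1}(V_1,V_2)$ and an error whose bound uses $\Hol{\beta/3}(\nabla h_i)<\ve$ and thus absorbs naturally into the $d_{C^0}(V_1,V_2)^{\beta/3}$ term. Property (3) then follows from the construction: the piece of $f(V_i)$ described by $G$ is a $u$-admissible manifold at $w$, and transversality to any other element of $\mathfs M^u(w)$ (interpreted against its paired $s$-admissible system) is controlled by Lemma \ref{Lemma-admissible-manifolds}.

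The main obstacle is bookkeeping the sizes: one must simultaneously (a) ensure the graph of $F$ sits inside the domain $R[10Q_\ve(x)]$ on which Theorem \ref{Thm-non-linear-Pesin-2} holds, (b) show that $Y([-p^u,p^u])\supset[-q^u,q^u]$ so that the new representing function is defined on the full required interval, and (c) arrange that each Lipschitz/Hölder constant in the calculation carries an explicit small factor rather than a harmless bounded one. The greedy rule (GPO2), $q^u=\min\{e^\ve p^u,\delta_\ve Q_\ve(y)\}$, is precisely what permits (a)--(b); (c) is handled by exploiting the regularity gap between $\beta/2$ (Theorem \ref{Thm-non-linear-Pesin-2}) and $\beta/3$ (admissible manifolds), so that factors like $Q_\ve(x)^{\beta/6}$ and $\rho(x)^{-a}Q_\ve(x)^{\beta/72}\leq\ve^{1/24}$ can absorb the residual constants uniformly in $x$.
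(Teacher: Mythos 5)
Your proposal follows the same Hadamard--Perron graph-transform route that the paper takes (by deferring to Sarig, Prop.~4.12 and 4.14, with the modifications spelled out in Appendix~B): define $Y(t)=Bt+h_2(F(t),t)$, invert it, push the representing function through $\mathfs F^u_{v,w}$, and use $|B|>e^\chi$ against (GPO2), $q^u\leq e^\ve p^u$, to get a definite margin, with the $\ve$-sized bounds of Theorem~\ref{Thm-non-linear-Pesin-2}(b) feeding the contraction estimates, so the approach is essentially identical. Two small imprecisions worth flagging: in your contraction step you quote $\|\nabla h_2\|_0<\ve\eta^{\beta/3}$, which is the bound at the origin --- the $C^0$ norm on $R[10Q_\ve(x)]$ must be estimated by combining $\|\nabla h_2(0)\|$ with $\Hol{\beta/3}(\nabla h_2)$, still of size $O(\ve)$ so the conclusion stands; and your justification of (3) is garbled --- the relevant assertion (and what Sarig actually proves) is that $f(V_i)$ meets every \emph{$s$-admissible} manifold at $w$ exactly once, which follows from Lemma~\ref{Lemma-admissible-manifolds}(1) applied to $\mathfs F^u_{v,w}(V_i)\in\mathfs M^u(w)$ together with an argument that $f(V_i)$ does not re-enter the chart outside that admissible piece, rather than from ``transversality to other elements of $\mathfs M^u(w)$''.
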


When $M$ is compact and $f$ is a $C^{1+\beta}$ diffeomorphism,
this is \cite[Prop. 4.12 and 4.14]{Sarig-JAMS}.
The proof in our case requires some minor changes, see Appendix B.

\subsection{Stable and unstable manifolds of $\ve$--gpo's}

Call a sequence ${\un v}^+=\{v_n\}_{n\geq 0}$ a {\em positive $\ve$--gpo} if $v_n\overset{\ve}{\to}v_{n+1}$
for all $n\geq 0$. Similarly, a {\em negative $\ve$--gpo} is a sequence ${\un v}^-=\{v_n\}_{n\leq 0}$
s.t. $v_{n-1}\overset{\ve}{\to}v_n$ for all $n\leq 0$.

\medskip
\noindent
{\sc Stable/unstable manifold of positive/negative $\ve$--gpo:} The {\em stable manifold}
of a positive $\ve$--gpo ${\un v}^+=\{v_n\}_{n\geq 0}$ is 
$$
V^s[{\un v}^+]:=\lim_{n\to\infty}
(\mathfs F_{v_0,v_1}^s\circ\cdots\circ\mathfs F_{v_{n-2},v_{n-1}}^s\circ\mathfs F_{v_{n-1},v_n}^s)(V_n)
$$
for some (any) choice of $(V_n)_{n\geq 0}$ with $V_n\in\mathfs M^s(v_n)$.
The {\em unstable manifold} of a negative $\ve$--gpo ${\un v}^-=\{v_n\}_{n\leq 0}$ is 
$$
V^u[{\un v}^-]:=\lim_{n\to-\infty}
(\mathfs F_{v_{-1},v_0}^u\circ\cdots\circ\mathfs F_{v_{n+1},v_{n+2}}^u\circ\mathfs F_{v_n,v_{n+1}}^u)(V_n)
$$
for some (any) choice of $(V_n)_{n\leq 0}$ with $V_n\in\mathfs M^u(v_n)$.

\medskip
For an $\ve$--gpo $\un{v}=\{v_n\}_{n\in\Z}$, let
$V^s[\un v]:=V^s[\{v_n\}_{n\geq 0}]$
and $V^u[\un v]:=V^u[\{v_n\}_{n\leq 0}]$.

\begin{proposition}\label{Prop-stable-manifolds}
The following holds for all $\ve>0$ small enough.
\begin{enumerate}[{\rm (1)}]
\item {\sc Admissibility:} $V^s[{\un v}^+],V^s[{\un v}^-]$ are well-defined admissible manifolds at $v_0$.
\item {\sc Invariance:}
$$
f(V^s[\{v_n\}_{n\geq 0}])\subset V^s[\{v_n\}_{n\geq 1}]\text{ and }
f^{-1}(V^u[\{v_n\}_{n\leq 0}])\subset V^u[\{v_n\}_{n\leq -1}].
$$
\item {\sc Shadowing:} If ${\un v}^+=\{\Psi_{x_n}^{p^s_n,p^u_n}\}_{n\geq 0}$ then
$$
V^s[{\un v}^+]=\{x\in \Psi_{x_0}(R[p^s_0]):f^n(x)\in \Psi_{x_n}(R[10Q_\ve(x_n)]),\,\forall n\geq 0\}.
$$
An analogous statement holds for $V^u[{\un v}^-]$.
\item {\sc Hyperbolicity:} If $x,y\in V^s[{\un v}^+]$ then $d(f^n(x),f^n(y))\xrightarrow[n\to\infty]{}0$,
if $x,y\in V^u[{\un v}^-]$ then $d(f^n(x),f^n(y))\xrightarrow[n\to-\infty]{}0$, and the rates are exponential.
\item {\sc H\"older property:} The map $\un v^+\mapsto V^s[\un v^+]$ is H\"older continuous,
i.e. there exists $K>0$ and $\theta<1$ s.t. for all $N\geq 0$, if $\un v^+,\un w^+$ are positive $\ve$--gpo's
with $v_n=w_n$ for $n=0,\ldots,N$
then $ d_{C^1}(V^s[\un v^+],V^s[\un w^+])\leq K\theta^N$. The same holds for the map
$\un v^-\mapsto V^u[\un v^-]$.
\end{enumerate}
\end{proposition}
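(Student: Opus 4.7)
The plan is to derive Proposition \ref{Prop-stable-manifolds} as a standard consequence of the graph transform Proposition \ref{Prop-graph-transform}, following the pattern of \cite{Sarig-JAMS,Lima-Sarig}. The five items are proved in order: (1) and (2) are abstract consequences of the contraction property; (3) is then a dynamical translation of (1); (4) uses the hyperbolicity of $f_{x,y}$ from Theorem \ref{Thm-non-linear-Pesin-2}; and (5) is another application of the contraction of $\mathfs F^{s}$ with the already-established Lipschitz estimate.

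\textbf{Admissibility and invariance.} Fix any choice of $V_n\in\mathfs M^s(v_n)$ and let
\[
W_N:=(\mathfs F^s_{v_0,v_1}\circ\cdots\circ \mathfs F^s_{v_{N-1},v_N})(V_N)\in\mathfs M^s(v_0).
\]
By Proposition \ref{Prop-graph-transform}(1)--(2), the operators $\mathfs F^s$ contract $d_{C^0}$ by $e^{-\chi/2}$, and $d_{C^1}$ by $e^{-\chi/2}$ up to the $C^0$ term; so iterating on $W_N,W_{N+1}$ (which share the first $N$ pullbacks) gives $d_{C^1}(W_N,W_{N+1})\leq e^{-\chi N/2}\cdot\mathrm{const}$. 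Hence $(W_N)$ is Cauchy in $C^1$, and by Arzelà–Ascoli the derivatives converge in $C^{\beta/3}$. The limit $V^s[\un v^+]$ therefore inherits (AM1)--(AM3) from $W_N$, showing it is $s$-admissible at $v_0$ and independent of the initial choice $(V_n)$. For invariance, the definition gives $\mathfs F^s_{v_0,v_1}(V^s[\{v_n\}_{n\geq 1}])=V^s[\{v_n\}_{n\geq 0}]$, and since $\mathfs F^s_{v_0,v_1}(V)\subset f_{x_0,x_1}^{-1}(V)$ by construction, applying $f$ gives the inclusion $f(V^s[\{v_n\}_{n\geq 0}])\subset V^s[\{v_n\}_{n\geq 1}]$. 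The unstable statements are symmetric.

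\textbf{Shadowing.} The inclusion $\subset$ follows because every point $x\in V^s[\un v^+]$ satisfies $f^n(x)\in V^s[\sigma^n\un v^+]\subset \Psi_{x_n}(R[p^s_n])\subset \Psi_{x_n}(R[10Q_\ve(x_n)])$ by invariance and the admissibility of $V^s[\sigma^n\un v^+]$. For $\supset$, given $x$ with $f^n(x)\in\Psi_{x_n}(R[10Q_\ve(x_n)])$ for all $n\geq 0$, construct for each $N$ an arbitrary $V_N\in\mathfs M^s(v_N)$ passing through $f^N(x)$ (possible because the point lies in the chart window and the space of admissible manifolds through a given point is nonempty by Lemma \ref{Lemma-admissible-manifolds}); then $W_N$ contains $x$, and since $W_N\to V^s[\un v^+]$ in $C^0$, the point $x$ lies on $V^s[\un v^+]$.

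\textbf{Hyperbolicity and Hölder property.} For (4), if $x,y\in V^s[\un v^+]$ write $x=\Psi_{x_0}(t_1,F(t_1))$, $y=\Psi_{x_0}(t_2,F(t_2))$; by invariance $f^n(x),f^n(y)$ lie on the $s$-admissible manifold at $v_n$ with representing function $F_n$, and by Theorem \ref{Thm-non-linear-Pesin-2}(b) together with (AM3) the first-coordinate action of $f_{x_n,x_{n+1}}$ contracts by a factor close to $|A|<e^{-\chi}$; combining this with the bound $\|F_n'\|_0\leq 1/2$ and the Lipschitz estimate of $\Psi_{x_n}$ gives exponential decay of $d(f^n(x),f^n(y))$ (the factor $\|C_\chi(f^n(x))^{-1}\|$ grows subexponentially by the definition of $\mathrm{NUH}_\chi^*$, so the exponent remains negative). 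For (5), suppose $v_n=w_n$ for $0\leq n\leq N$. Starting from arbitrary $V_N\in\mathfs M^s(v_N)$, $V_N'\in\mathfs M^s(w_N)$ with $d_{C^1}(V_N,V_N')\leq D$ bounded uniformly on $\mathfs M^s(v_N)$ by (AM3), the contraction in Proposition \ref{Prop-graph-transform}(2) applied $N$ times yields $d_{C^1}(W_N,W_N')\leq K(e^{-\chi/2})^N$ after absorbing the $C^0$ correction; passing to the limit and noting that the symbolic distance between the gpo's is $e^{-N}$, we get the Hölder bound with $\theta=e^{-\chi/2}$ and some explicit $K$.

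\textbf{Main obstacle.} The delicate point is the $\supset$ direction of shadowing: one must verify that the candidate admissible manifolds $V_N$ through $f^N(x)$ satisfy (AM1)--(AM3) uniformly in $N$, which is where the greedy condition (GPO2) and the bound $p^s_n\leq\delta_\ve Q_\ve(x_n)$ enter — without it, the derivatives of the pulled-back graphs could fail to satisfy $\|F'\|_0\leq 1/2$, breaking the contraction argument. The other estimates are essentially routine given the work of sections 3 and 4.
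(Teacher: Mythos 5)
Your overall strategy coincides with the paper's: the paper simply defers to \cite[Prop.~4.15]{Sarig-JAMS}, saying the same proof works using the hyperbolicity of $f_{x,y}$ (Theorem \ref{Thm-non-linear-Pesin-2}) and the contraction of the graph transforms (Proposition \ref{Prop-graph-transform}), and your treatment of parts (1), (2), (4), (5) follows that route correctly up to harmless imprecision. Two small points: since Proposition \ref{Prop-graph-transform}(2) only gives $d_{C^1}(\mathfs F(V_1),\mathfs F(V_2))\le e^{-\chi/2}\bigl(d_{C^1}(V_1,V_2)+d_{C^0}(V_1,V_2)^{\beta/3}\bigr)$, the geometric rate you actually obtain in (1) and (5) is governed by $e^{-\beta\chi/6}$ rather than $e^{-\chi/2}$ (this still yields some $\theta<1$, so the conclusions stand); and in (4) the appeal to $\|C_\chi(f^n(x))^{-1}\|$ and to ${\rm NUH}_\chi^*$ is misplaced — the shadowed points need not lie in ${\rm NUH}_\chi^*$, and the $2$--Lipschitz bound of Lemma \ref{Lemma-Pesin-chart}(1) already converts the chart-coordinate contraction into decay of $d(f^n(x),f^n(y))$ without any such term.

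The genuine gap is in the inclusion $\supset$ of the shadowing property (3). You propose to pick, for each $N$, some $V_N\in\mathfs M^s(v_N)$ passing through $f^N(x)$. Such a manifold need not exist: the hypothesis only places $f^N(x)$ in $\Psi_{x_N}(R[10Q_\ve(x_N)])$, whereas by (AM1) and (AM3) every element of $\mathfs M^s(v_N)$ is contained in $\Psi_{x_N}\{(t,s):|t|\le p^s_N,\ |s|\le p^s_N\}$, a strip of size $p^s_N\le\delta_\ve Q_\ve(x_N)\ll 10Q_\ve(x_N)$; and Lemma \ref{Lemma-admissible-manifolds} says nothing about the existence of admissible manifolds through a prescribed point. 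Even when $V_N$ exists, $W_N$ is a truncation of the pullback to $|t|\le p^s_0$, so $x\in W_N$ is not automatic. The correct argument (the one in \cite[Prop.~4.15]{Sarig-JAMS}) runs in the opposite direction: write $f^n(x)=\Psi_{x_n}(t_n,s_n)$ and compare $s_n$ with $F_n(t_n)$, where $F_n$ represents $V^s[\{v_k\}_{k\ge n}]$. By Theorem \ref{Thm-non-linear-Pesin-2} together with (AM3), the vertical deviation satisfies $|s_{n+1}-F_{n+1}(t_{n+1})|\ge (e^{\chi}-O(\ve))\,|s_n-F_n(t_n)|$, while it is uniformly bounded by $10Q_\ve(x_n)+p^s_n\le 11\ve^{3/\beta}$; hence it must vanish for all $n$, so $s_0=F_0(t_0)$ and, since $|t_0|\le p^s_0$, $x\in V^s[\un v^+]$. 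Your ``main obstacle'' paragraph points at (GPO2), but that is not where the difficulty lies.
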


When $M$ is compact and $f$ is a $C^{1+\beta}$ diffeomorphism,
this is \cite[Prop. 4.15]{Sarig-JAMS}. The same proof works in our case:
it uses the hyperbolicity of $f_{x,y}$ (Theorem \ref{Thm-non-linear-Pesin-2}),
and the contracting properties of the graph transforms (Proposition \ref{Prop-graph-transform}).
Proposition \ref{Prop-stable-manifolds} ensures that every $\ve$--gpo is associated to a unique point.

\medskip
\noindent
{\sc Shadowing:} We say that an $\ve$--gpo $\{\Psi_{x_n}^{p^s_n,p^u_n}\}$ {\em shadows}
a point $x\in M$ when $f^n(x)\in \Psi_{x_n}(R[p^s_n\wedge p^u_n])$ for all $n\in\Z$.

\begin{lemma}\label{Lemma-shadowing}
Every $\ve$--gpo shadows a unique point.
\end{lemma}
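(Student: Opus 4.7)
The plan is to identify the shadowed point as the unique intersection $V^s[\un v]\cap V^u[\un v]$ and then verify the shadowing property orbit-wise using the invariance of these admissible manifolds.

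First, I would invoke Proposition \ref{Prop-stable-manifolds}(1) to obtain the $s$-admissible manifold $V^s[\un v]\in\mathfs M^s(v_0)$ and the $u$-admissible manifold $V^u[\un v]\in\mathfs M^u(v_0)$. By Lemma \ref{Lemma-admissible-manifolds}(1), these intersect at a single point $p=\Psi_{x_0}(w_0)$ with $\|w_0\|_\infty<10^{-2}(p_0^s\wedge p_0^u)$, so in particular $p\in\Psi_{x_0}(R[p_0^s\wedge p_0^u])$. This is the candidate shadowed point.

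Next, to check that $\un v$ shadows $p$, I would show $f^n(p)\in \Psi_{x_n}(R[p_n^s\wedge p_n^u])$ for every $n\in\Z$. By the invariance statement in Proposition \ref{Prop-stable-manifolds}(2), $f^n(V^s[\{v_k\}_{k\geq 0}])\subset V^s[\{v_k\}_{k\geq n}]$ and $f^n(V^u[\{v_k\}_{k\leq 0}])\subset V^u[\{v_k\}_{k\leq n}]$, hence $f^n(p)$ lies in both an $s$-admissible and a $u$-admissible manifold at $v_n$. Applying Lemma \ref{Lemma-admissible-manifolds}(1) at the chart $v_n$, their unique intersection lies in $\Psi_{x_n}(R[10^{-2}(p_n^s\wedge p_n^u)])\subset\Psi_{x_n}(R[p_n^s\wedge p_n^u])$, giving the required inclusion.

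For uniqueness, suppose $q\in M$ is also shadowed by $\un v$, so $f^n(q)\in\Psi_{x_n}(R[p_n^s\wedge p_n^u])$ for all $n$. Since $p_n^s\wedge p_n^u\leq \delta_\ve Q_\ve(x_n)\leq 10 Q_\ve(x_n)$, the shadowing characterization in Proposition \ref{Prop-stable-manifolds}(3) applied to the forward half of $\un v$ gives $q\in V^s[\{v_n\}_{n\geq 0}]$, and applied to the backward half gives $q\in V^u[\{v_n\}_{n\leq 0}]$. Hence $q\in V^s[\un v]\cap V^u[\un v]=\{p\}$, so $q=p$.

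There is no serious obstacle here: everything is packaged in Lemma \ref{Lemma-admissible-manifolds} and Proposition \ref{Prop-stable-manifolds}. The only minor point to watch is compatibility of domain sizes, namely that the radii $p_n^s\wedge p_n^u$ from the shadowing definition fit inside the radii $10Q_\ve(x_n)$ required by the characterization in Proposition \ref{Prop-stable-manifolds}(3); this is immediate from $p_n^s,p_n^u\leq \delta_\ve Q_\ve(x_n)$ built into the definition of an $\ve$-double chart.
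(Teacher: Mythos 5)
Your overall strategy is exactly the paper's: take $p$ to be the unique intersection point of $V^s[\un v]$ and $V^u[\un v]$ supplied by Lemma \ref{Lemma-admissible-manifolds}(1), propagate it along the orbit using Proposition \ref{Prop-stable-manifolds}(2), and deduce uniqueness from the characterization in Proposition \ref{Prop-stable-manifolds}(3). The choice of candidate point and the uniqueness half are correct and match the paper verbatim.

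There is, however, one step in the existence half that does not work as written. Proposition \ref{Prop-stable-manifolds}(2) states $f(V^s[\{v_k\}_{k\geq 0}])\subset V^s[\{v_k\}_{k\geq 1}]$ and $f^{-1}(V^u[\{v_k\}_{k\leq 0}])\subset V^u[\{v_k\}_{k\leq -1}]$: stable manifolds are \emph{forward} invariant and unstable manifolds are \emph{backward} invariant. You additionally assert $f^n(V^u[\{v_k\}_{k\leq 0}])\subset V^u[\{v_k\}_{k\leq n}]$ for $n>0$, but the graph transform gives $V^u[\{v_k\}_{k\leq 1}]=\mathfs F^u_{v_0,v_1}(V^u[\{v_k\}_{k\leq 0}])\subset f(V^u[\{v_k\}_{k\leq 0}])$, i.e. the forward image of a $u$--manifold \emph{contains} the next one; your inclusion points the wrong way (and symmetrically for the $s$--manifold when $n<0$). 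Consequently, for $n>0$ you only know a priori that $f^n(p)$ lies on $V^s[\{v_k\}_{k\geq n}]$, not that it is the intersection of two admissible manifolds at $v_n$, so Lemma \ref{Lemma-admissible-manifolds}(1) cannot be invoked directly at time $n$. The paper avoids the issue by settling for the weaker containment $f^n(p)\in V^s[\{v_k\}_{k\geq n}]\subset\Psi_{x_n}(R[10Q_\ve(x_n)])$ for $n\geq 0$ (and the symmetric one for $n\leq 0$). If you want the sharper conclusion $f^n(p)\in\Psi_{x_n}(R[p^s_n\wedge p^u_n])$ that your argument aims for, the step can be repaired via Proposition \ref{Prop-graph-transform}(3): $f(V^u[\{v_k\}_{k\leq 0}])$ meets the $s$--admissible manifold $V^s[\{v_k\}_{k\geq 1}]$ in exactly one point; since $V^u[\{v_k\}_{k\leq 1}]\subset f(V^u[\{v_k\}_{k\leq 0}])$ already meets it, that point is $V^u[\{v_k\}_{k\leq 1}]\cap V^s[\{v_k\}_{k\geq 1}]$, and as $f(p)$ lies in both sets it equals this intersection point; then induct on $n$ and argue symmetrically for $n<0$.
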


\begin{proof}
Let $\un v=\{v_n\}_{n\in\Z}$ be an $\ve$--gpo. By Proposition \ref{Prop-stable-manifolds}(3),
any point shadowed by $\un v$ must lie in $V^s[\{v_n\}_{n\geq 0}]\cap V^u[\{v_n\}_{n\leq 0}]$.
By Lemma \ref{Lemma-admissible-manifolds}(1), this intersection consists of a singleton $\{x\}$.
Write $v_n=\Psi_{x_n}^{p^s_n,p^u_n}$. By Proposition \ref{Prop-stable-manifolds}(2),
for all $n\geq 0$ we have $f^n(x)\in V^s[\{v_{n+k}\}_{k\geq 0}]\subset \Psi_{x_n}(R[10Q_\ve(x_n)])$,
and for all $n\leq 0$ we have $f^n(x)\in V^u[\{v_{n+k}\}_{k\leq 0}]\subset\Psi_{x_n}(R[10Q_\ve(x_n)])$,
hence $\un v$ shadows $x$.
\end{proof}

\section{Coarse graining}\label{Section-coarse-graining}

We now pass to a countable set of $\ve$--double charts that define a topological Markov shift that
shadows all relevant orbits.

\begin{theorem}\label{Thm-coarse-graining}
For all $\ve>0$ sufficiently small, there exists a countable family $\mathfs A$ of $\ve$--double charts
with the following properties:
\begin{enumerate}[{\rm (1)}]
\item {\sc Discreteness}: For all $t>0$, the set $\{\Psi_x^{p^s,p^u}\in\mathfs A:p^s,p^u>t\}$ is finite.
\item {\sc Sufficiency:} If $x\in {\rm NUH}_\chi^*$ then there is a sequence $\un v\in{\mathfs A}^{\Z}$
that shadows $x$.
\item {\sc Relevance:} For all $v\in \mathfs A$ there is an $\ve$--gpo $\un{v}\in\mathfs A^\Z$
with $v_0=v$ that shadows a point in ${\rm NUH}_\chi^*$.
\end{enumerate}
\end{theorem}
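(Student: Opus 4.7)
The proof follows the coarse-graining paradigm of \cite{Sarig-JAMS}. The plan is to discretize the data defining an $\ve$--double chart, producing a countable pre-family $\mathfs A_0$, and then to trim $\mathfs A_0$ down to the charts participating in a bi-infinite gpo that shadows a point of ${\rm NUH}_\chi^*$.

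To build $\mathfs A_0$, I would fix a countable dense subset $M_0\subset M\backslash\mathfs D$ (exists by separability of $M$) and, for each $y\in M_0$, a countable dense family $\mathfs L_y^0\subset\mathfs L_y$ of ``rational'' linear maps. For every tuple $(y,L,p^s,p^u)$ with $y\in M_0$, $L\in\mathfs L_y^0$, and $p^s,p^u\in I_\ve$, I declare the tuple \emph{admissible} if there exists $x\in{\rm NUH}_\chi^*$ realizing the simultaneous approximations $d(x,y)<(p^s\wedge p^u)^8$, $\|\widetilde{C_\chi(x)}-L\|<(p^s\wedge p^u)^8$, and $p^s,p^u\leq\delta_\ve Q_\ve(x)$. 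For each admissible tuple, I fix one such $x$ and include $\Psi_x^{p^s,p^u}$ in $\mathfs A_0$; the resulting $\mathfs A_0$ is countable, and the exponent $8$ (any sufficiently large power works) ensures that Proposition \ref{Lemma-overlap} applies between any two of its elements arising from nearby approximants.

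For \textbf{sufficiency}, given $x\in{\rm NUH}_\chi^*$, I select at each $n\in\Z$ an element $v_n=\Psi_{x_n}^{p_n^s,p_n^u}\in\mathfs A_0$ whose discrete parameters $(y,L)$ approximate $(f^n(x),\widetilde{C_\chi(f^n(x))})$ at scale $(p_n^s\wedge p_n^u)^8$, making the specific choice $p_n^s:=q_\ve^s(f^n(x))$ and $p_n^u:=q_\ve^u(f^n(x))$. Lemma \ref{Lemma-q^s}(2) then yields (GPO2) directly, while (GPO1) reduces to an $\ve$--overlap estimate between $\Psi_{f(x_n)}$ and $\Psi_{x_{n+1}}$ obtained by triangle inequality: the basepoints $f(x_n)$ and $x_{n+1}$ are both close to $f^{n+1}(x)$ (via (A5) and the approximation), and the linear parts $\widetilde{C_\chi(f(x_n))}$ and $\widetilde{C_\chi(x_{n+1})}$ are compared via (A6) applied to $df_{x_n}$ and $df_{f^n(x)}$.

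For \textbf{discreteness}, if $p^s,p^u>t$ then any realizing $x$ has $Q_\ve(x)>t/\delta_\ve$, which by the very definition of $Q_\ve$ forces $\|C_\chi(x)^{-1}\|$ and $\rho(x)^{-1}$ to be bounded above in terms of $t$. Hence $L$ lies in a finite subset of $\mathfs L_y^0$ and $y$ lies within a compact subset of $M\backslash\mathfs D$ at uniformly positive distance from $\mathfs D$, leaving only finitely many tuples $(y,L,p^s,p^u)$. Finally, for \textbf{relevance}, I set
\[
\mathfs A:=\{v\in\mathfs A_0:v\text{ belongs to some }\ve\text{--gpo in }\mathfs A_0^\Z\text{ shadowing a point of }{\rm NUH}_\chi^*\}.
\]
Relevance is then tautological, sufficiency is preserved because the gpo constructed for each $x\in{\rm NUH}_\chi^*$ lies entirely in $\mathfs A$, and discreteness is inherited from $\mathfs A_0$. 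The main obstacle I anticipate is controlling the propagation of the $(p^s\wedge p^u)^8$ tolerance under $f$ in the (GPO1) verification: since $\|df\|$ can be as large as $\rho(x)^{-a}$, these polynomial losses must be absorbed by the high power in the tolerance, which works precisely because of the built-in bound $\rho(x)^{-a}Q_\ve(x)^{\beta/72}<\ve^{1/24}$ baked into the definition of $Q_\ve$.
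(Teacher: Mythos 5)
Your overall plan (discretize, verify (GPO1)--(GPO2), trim) is the right paradigm, but there are two genuine gaps in your execution, and the first one is fatal without a new idea.

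\textbf{Gap 1: $C_\chi$ is not transportable by $f$, so your discretization stores too little.} Your admissible tuples only record $y\approx x$ and $L\approx\widetilde{C_\chi(x)}$, i.e.\ the Oseledets--Pesin data at the anchor point alone. To verify (GPO1) for the edge $v_n\to v_{n+1}$, you must show
\[
\Psi_{f(x_n)}^{\eta}\overset{\ve}{\approx}\Psi_{x_{n+1}}^{\eta},
\]
which requires controlling $\|\widetilde{C_\chi(f(x_n))}-\widetilde{C_\chi(x_{n+1})}\|$. You propose to obtain this ``via (A6) applied to $df_{x_n}$ and $df_{f^n(x)}$''. This cannot work: $C_\chi(f(x_n))$ is \emph{not} $df_{x_n}\circ C_\chi(x_n)$ or anything computable from the germ of $f$ at $x_n$; it is built from $s(f(x_n)),u(f(x_n)),\alpha(f(x_n))$, which depend on the entire forward and backward orbit of $x_n$. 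The map $z\mapsto C_\chi(z)$ is merely measurable, and nothing in (A5)--(A6) lets you conclude that $C_\chi(x_n)\approx C_\chi(f^n(x))$ implies $C_\chi(f(x_n))\approx C_\chi(f^{n+1}(x))$. This is precisely why the paper's $\Gamma(x)$ records the triple $(f^{-1}(x),x,f(x))$ together with the three matrices $(C_\chi(f^{-1}(x)),C_\chi(x),C_\chi(f(x)))$, and why the precompactness/finite-net argument is run on that nine-slot data rather than on $(x,C_\chi(x))$. Without tracking the neighboring $C_\chi$'s, the (GPO1) verification has no chance.

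\textbf{Gap 2: your choice of $p_n^s,p_n^u$ does not satisfy (GPO2) (or even (CG2)).} You set $p_n^s=q_\ve^s(f^n(x))$ and invoke Lemma~\ref{Lemma-q^s}(2), which gives $q_\ve^s(f^n(x))=\min\{e^\ve q_\ve^s(f^{n+1}(x)),\delta_\ve Q_\ve(f^n(x))\}$. But (GPO2) requires $p^s_n=\min\{e^\ve p^s_{n+1},\delta_\ve Q_\ve(x_n)\}$ with the discretized basepoint $x_n$ in the $Q_\ve$--slot, not $f^n(x)$. Since one only controls $\tfrac{Q_\ve(f^n(x))}{Q_\ve(x_n)}=e^{\pm\ve/3}$, these two recursions generically produce different values. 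Worse, $p^s_n=q_\ve^s(f^n(x))\leq\delta_\ve Q_\ve(f^n(x))$ does not guarantee $p^s_n\leq\delta_\ve Q_\ve(x_n)$, so $\Psi_{x_n}^{p^s_n,p^u_n}$ need not even be an $\ve$--double chart. The paper instead defines $p^s_n:=\delta_\ve\min\{e^{\ve k}Q_\ve(x_{n+k}):k\geq 0\}$ (and similarly for $p^u_n$) directly in terms of the discretized points, so (GPO2) holds by construction and (CG2) is immediate; the estimates comparing $Q_\ve(x_{n+k})$ and $Q_\ve(f^{n+k}(x))$ are used only to verify (CG3) and positivity.

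Your discreteness and relevance arguments are in the right spirit; once $\Gamma$ records the three consecutive positions and matrices, discreteness also requires bounding $\|C_\chi(f^{-1}(x))^{-1}\|$ in terms of $\|C_\chi(x)^{-1}\|$ and $\rho(x)$, which is the role of inequality~(\ref{inequality-C}) and is not automatic from the definition of $Q_\ve$ alone (which involves only $\|C_\chi(x)^{-1}\|$ and $\|C_\chi(f(x))^{-1}\|$).
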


Parts (1) and (3) will be crucial to prove the inverse theorem (Theorem \ref{Thm-inverse}).
Part (2) says that the $\ve$--gpo's in $\mathfs A$ shadow a.e. point with respect to every
$f$--adapted $\chi$--hyperbolic measure, see Lemma \ref{Lemma-adaptedness}.

\begin{remark}
In part (2) we only assume that $x\in{\rm NUH}_\chi^*$, while \cite{Lima-Sarig,Sarig-JAMS}
require the stronger assumption $x\in{\rm NUH}_\chi^\#$. The reason of the improvement
is that here $q_\ve(x)$ is defined as a minimum instead of a sum,
and hence Lemma \ref{Lemma-q^s}(1) holds.
\end{remark}

\begin{proof}
When $M$ is compact and $f$ is a diffeomorphism, the above statement is consequence
of Propositions 3.5, 4.5 and Lemmas 4.6, 4.7 of \cite{Sarig-JAMS}. When $M$ is compact (with boundary)
and $f$ is a local diffeomorphism with bounded derivatives, this is Proposition 4.3 of \cite{Lima-Sarig}.
We follow the same strategy, adapted to our context.

\medskip
For $t>0$, let $M_t=\{x\in M: d(x,\mathfs D)\geq t\}$.
Since $M$ has finite diameter (remember we are even assuming it is smaller than one), each $M_t$
is precompact\footnote{$M_t$ might not be compact, since $M$ might have boundaries.}.
Let $\N_0=\N\cup\{0\}$. Fix a countable open cover $\mathfs P=\{D_i\}_{i\in\N_0}$ of $M\backslash\mathfs D$ s.t.:
\begin{enumerate}[$\circ$]
\item $D_i:=D_{z_i}=B(z_i,2\mathfrak r(z_i))$ for some $z_i\in M$.
\item For every $t>0$, $\{D\in\mathfs P:D\cap M_t\neq\emptyset\}$ is finite.
\end{enumerate}

\medskip
Let $X:=M^3\times {\rm GL}(2,\R)^3\times (0,1]$.
For $x\in{\rm NUH}_\chi^*$, let
$\Gamma(x)=(\un x,\un C,\un Q)\in X$ with
\begin{align*}
\un x=(f^{-1}(x),x,f(x)),\ \un C=(C_\chi(f^{-1}(x)),C_\chi(x),C_\chi(f(x))),\ \un Q=Q_\ve(x).
\end{align*}
Let $Y=\{\Gamma(x):x\in{\rm NUH}_\chi^*\}$. We want to construct a countable dense subset
of $Y$. Since the maps $x\mapsto C_\chi(x),Q_\ve(x)$ are usually just measurable,
we apply a precompactness argument.
For each triple of vectors $\un{k}=(k_{-1},k_0,k_1)$, $\un{\ell}=(\ell_{-1},\ell_0,\ell_1)$,
$\un a=(a_{-1},a_0,a_1)\in\N_0^3$ and $m\in\N_0$, define
$$
Y_{\un k,\un \ell,\un a,m}:=\left\{\Gamma(x)\in Y:
\begin{array}{cl}
e^{-k_i-1}\leq d(f^i(x),\mathfs D)< e^{-k_i},& -1\leq i\leq 1\\
e^{\ell_i}\leq\|C_\chi(f^i(x))^{-1}\|<e^{\ell_i+1},&-1\leq i\leq 1\\
f^i(x)\in D_{a_i},&-1\leq i\leq 1\\
e^{-m-1}\leq Q_\ve(x)< e^{-m}&\\
\end{array}
\right\}.
$$

\medskip
\noindent
{\sc Claim 1:} $Y=\bigcup_{\un k,\un\ell,\un a\in\N_0^3\atop{m\in\N_0}}Y_{\un k,\un\ell,\un a,m}$, and each
$Y_{\un k,\un\ell,\un a,m}$ is precompact in $X$.

\medskip
\noindent
{\em Proof of claim $1$.}
The first statement is clear. We focus on the second.
Fix $\un k,\un\ell,\un a\in \N_0^3$, $m\in\N_0$. Take $\Gamma(x)\in Y_{\un k,\un\ell,\un a,m}$. Then
$$
\un x\in M_{e^{-k_{-1}-1}}\times M_{e^{-k_0-1}}\times M_{e^{-k_1-1}},$$
a precompact subset of $M^3$.
For $|i|\leq 1$, $C_\chi(f^i(x))$ is an element of ${\rm GL}(2,\R)$ with norm $\leq 1$ and
inverse norm $\leq e^{\ell_i+1}$, hence it belongs to a compact subset of ${\rm GL}(2,\R)$.
This guarantees that $\un C$ belongs to a compact subset of ${\rm GL}(2,\R)^3$. Also,
$\un Q\in [e^{-m-1},1]$, a compact subinterval of $(0,1]$. Since the product of precompact sets
is precompact, the claim is proved.

\medskip
Let $j\geq 0$. By claim 1, there exists a finite set
$Y_{\un k,\un\ell,\un a,m}(j)\subset Y_{\un k,\un\ell,\un a,m}$
s.t. for every $\Gamma(x)\in Y_{\un k,\un\ell,\un a,m}$
there exists $\Gamma(y)\in Y_{\un k,\un\ell,\un a,m}(j)$
s.t.:
\begin{enumerate}[{\rm (a)}]
\item $ d(f^i(x),f^i(y))+\|\widetilde{C_\chi(f^i(x))}-\widetilde{C_\chi(f^i(y))}\|<e^{-8(j+2)}$
for $-1\leq i\leq 1$.
\item $\tfrac{Q_\ve(x)}{Q_\ve(y)}=e^{\pm \ve/3}$.
\end{enumerate}

\medskip
\noindent
{\sc The alphabet $\mathfs A$:} Let $\mathfs A$ be the countable family of $\Psi_x^{p^s,p^u}$ s.t.:
\begin{enumerate}[i i)]
\item[(CG1)] $\Gamma(x)\in Y_{\un k,\un\ell,\un a,m}(j)$ for some
$(\un k,\un\ell,\un a,m,j)\in\N_0^3\times\N_0^3\times\N_0^3\times \N_0\times \N_0$.
\item[(CG2)] $0<p^s,p^u\leq \delta_\ve Q_\ve(x)$ and $p^s,p^u\in I_\ve$.
\item[(CG3)] $e^{-j-2}\leq p^s\wedge p^u\leq e^{-j+2}$.
\end{enumerate}

\medskip
\noindent
{\em Proof of discreteness.}
We will use the following fact, whose proof is in the appendix:
\begin{equation}\label{inequality-C}
\|C_\chi(f^{-1}(x))^{-1}\|\leq 2\rho(x)^{-2a}(1+e^\chi\rho(x)^{-a})\|C_\chi(x)^{-1}\|.
\end{equation}

Fix $t>0$, and let $\Psi_x^{p^s,p^u}\in\mathfs A$ with $p^s,p^u>t$.
Note that $\rho(x)>\rho(x)^{2a}>Q_\ve(x)>p^s,p^u>t$.
If $\Gamma(x)\in Y_{\un k,\un\ell,\un a,m}(j)$ then:
\begin{enumerate}[$\circ$]
\item Finiteness of $\un k$: for $|i|\leq 1$, $e^{-k_i}> d(f^i(x),\mathfs D)\geq\rho(x)>t$, hence $k_i< |\log t|$.
\item Finiteness of $\un\ell$: for $i=0,1$, $e^{\ell_i}\leq \|C_\chi(f^i(x))^{-1}\|<Q_\ve(x)^{-1}<t^{-1}$,
hence $\ell_i<|\log t|$. By inequality (\ref{inequality-C}) above,
$$
e^{\ell_{-1}}\leq \|C_\chi(f^{-1}(x))^{-1}\|<2t^{-1}(1+e^\chi t^{-1})t^{-1}<4e^\chi t^{-3},
$$
hence $\ell_{-1}<\log 4+\chi+3|\log t|=:T_t$, which is bigger than $|\log t|$.
\item Finiteness of $\un a$: $f^i(x)\in D_{a_i}\cap M_t$, hence $D_{a_i}$ belongs to the finite set
$\{D\in\mathfs P:D\cap M_t\neq\emptyset\}$.
\item Finiteness of $m$: $e^{-m}>Q_\ve(x)>t$, hence $m<|\log t|$.
\item Finiteness of $j$: $t<p^s\wedge p^u\leq e^{-j+2}$, hence $j\leq |\log t|+2$.
\item Finiteness of $(p^s,p^u)$: $t< p^s,p^u$, hence $\#\{(p^s,p^u):p^s,p^u>t\}\leq \#(I_\ve\cap (t,1])^2$ is finite.
\end{enumerate}
The first five items above give that, for $\un a\in\N_0^3$ and $t>0$,
\begin{align*}
\#\left\{\Gamma(x):
\begin{array}{c}
\Psi_x^{p^s,p^u}\in\mathcal A\text{ s.t. }p^s,p^u>t\\
\text{and }f^i(x)\in D_{a_i}, |i|\leq 1
\end{array}
\right\}\leq\sum_{j=0}^{\lceil |\log t|\rceil+2}\sum_{m=0}^{\lceil |\log t|\rceil}
\sum_{-1\leq i\leq 1\atop{k_i,\ell_i=0}}^{T_t}
\# Y_{\un k,\un\ell,\un a,m}(j)
\end{align*}
is the finite sum of finite terms, hence finite. Together with the last item above,
we conclude that
\begin{align*}
\#\left\{\Psi_x^{p^s,p^u}\in\mathcal A:p^s,p^u>t\right\}&\leq 
\sum_{j=0}^{\lceil |\log t|\rceil+2}\sum_{m=0}^{\lceil |\log t|\rceil}\sum_{-1\leq i\leq 1\atop{k_i,\ell_i=0}}^{T_t}
\# Y_{\un k,\un\ell,\un a,m}(j)\\
&\ \ \ \times (\#\{D\in\mathfs P:D\cap M_t\neq\emptyset\})^3\times (\#(I_\ve\cap (t,1]))^2
\end{align*}
is finite. This proves the discreteness of $\mathfs A$.

\medskip
\noindent
{\em Proof of sufficiency.}
Let $x\in {\rm NUH}_\chi^*$. Take $(k_i)_{i\in\Z},(\ell_i)_{i\in\Z},(m_i)_{i\in\Z},(a_i)_{i\in\Z},(j_i)_{i\in\Z}$  s.t.:
\begin{align*}
& d(f^i(x),\mathfs D)\in [e^{-k_i-1},e^{-k_i}), \|C_\chi(f^i(x))^{-1}\|\in [e^{\ell_i},e^{\ell_i+1}),\\
&Q_\ve(f^i(x))\in [e^{-m_i-1},e^{-m_i}),f^i(x)\in D_{a_i}, q_\ve(f^i(x))\in[e^{-j_i-1},e^{-j_i+1}).
\end{align*}
For $n\in\Z$, define
$$
\un k^{(n)}=(k_{n-1},k_n,k_{n+1}),\ \un\ell^{(n)}=(\ell_{n-1},\ell_n,\ell_{n+1}),\ \un a^{(n)}=(a_{n-1},a_n,a_{n+1}).
$$
Then $\Gamma(f^n(x))\in Y_{\un k^{(n)},\un\ell^{(n)},\un a^{(n)},m_n}$.
Take $\Gamma(x_n)\in Y_{\un k^{(n)},\un\ell^{(n)},\un a^{(n)},m_n}(j_n)$
s.t.:
\begin{enumerate}[aaa)]
\item[(${\rm a}_n$)] $ d(f^i(f^n(x)),f^i(x_n))+
\|\widetilde{C_\chi(f^i(f^n(x)))}-\widetilde{C_\chi(f^i(x_n))}\|<e^{-8(j_n+2)}$
for $|i|\leq 1$.
\item[(${\rm b}_n$)] $\tfrac{Q_\ve(f^n(x))}{Q_\ve(x_n)}=e^{\pm\ve/3}$.
\end{enumerate}
Define $p^s_n=\delta_\ve\min\{e^{\ve|k|}Q_\ve(x_{n+k}):k\geq 0\}$ and
$p^u_n=\delta_\ve\min\{e^{\ve|k|}Q_\ve(x_{n+k}):k\leq 0\}$.
We claim that $\{\Psi_{x_n}^{p^s_n,p^u_n}\}_{n\in\Z}$ is an $\ve$--gpo
in $\mathfs A^\Z$ that shadows $x$.

\medskip
\noindent
{\sc Claim 2:} $\Psi_{x_n}^{p^s_n,p^u_n}\in\mathfs A$ for all $n\in\Z$.

\medskip
\noindent
(CG1) By definition, $\Gamma(x_n)\in Y_{\un k^{(n)},\un\ell^{(n)},\un a^{(n)},m_n}(j_n)$.

\noindent
(CG2) By (${\rm b}_n$), $\inf\{e^{\ve|k|}Q_\ve(x_{n+k}):k\geq 0\}=
e^{\pm\ve/3}\inf\{e^{\ve|k|}Q_\ve(f^{n+k}(x)):k\geq 0\}$ is positive. Since the only accumulation
point of $I_\ve$ is zero, it follows that $p^s_n,p^u_n$ are well-defined and positive.
The other conditions are clear from the definition.


\noindent
(CG3) Again by (${\rm b}_n$), we have
$$
\min\{e^{\ve|k|}Q_\ve(x_{n+k}):k\geq 0\}=e^{\pm\ve/3}\min\{e^{\ve|k|}Q_\ve(f^{n+k}(x)):k\geq 0\}
$$
hence $\tfrac{p^s_n}{q_\ve^s(f^n(x))}=e^{\pm\ve/3}$, and analogously 
$\tfrac{p^u_n}{q_\ve^u(f^n(x))}=e^{\pm\ve/3}$. By Lemma \ref{Lemma-q^s}(1),
$p^s_n\wedge p^u_n=e^{\pm\ve/3}q_\ve(f^n(x))\in [e^{-j_n-2},e^{-j_n+2})$.

\medskip
\noindent
{\sc Claim 3:} $\Psi_{x_n}^{p^s_n,p^u_n}\overset{\ve}{\rightarrow}\Psi_{x_{n+1}}^{p^s_{n+1},p^u_{n+1}}$
for all $n\in\Z$.

\medskip
\noindent
(GPO1) We have $f(x_n),x_{n+1}\in D_{a_{n+1}}$, and by (${\rm a}_n$) with $i=1$
and (${\rm a}_{n+1}$) with $i=0$, we have
\begin{align*}
& d(f(x_n),x_{n+1})+\|\widetilde{C_\chi(f(x_n))}-\widetilde{C_\chi(x_{n+1})}\|\\
&\leq  d(f^{n+1}(x),f(x_n))+
\|\widetilde{C_\chi(f^{n+1}(x))}-\widetilde{C_\chi(f(x_n))}\|\\
&\ \ \ \,+ d(f^{n+1}(x),x_{n+1})+
\|\widetilde{C_\chi(f^{n+1}(x))}-\widetilde{C_\chi(x_{n+1})}\|\\
&<e^{-8(j_n+2)}+e^{-8(j_{n+1}+2)}\leq e^{-8}\left(q_\ve(f^n(x))^8+q_\ve(f^{n+1}(x))^8\right)\\
&\overset{!}{\leq} e^{-8}(1+e^{8\ve})q_\ve(f^{n+1}(x))^8\leq e^{-8+8\ve/3}(1+e^{8\ve})(p^s_{n+1}\wedge p^u_{n+1})^8
\overset{!!}{<}(p^s_{n+1}\wedge p^u_{n+1})^8,
\end{align*}
where in $\overset{!}{\leq}$ we used Lemma \ref{Lemma-q} and in $\overset{!!}{<}$
we used that $e^{-8+8\ve/3}(1+e^{8\ve})<1$ when $\ve>0$ is sufficiently small. This proves
that $\Psi_{f(x_n)}^{p^s_{n+1}\wedge p^u_{n+1}}\overset{\ve}{\approx}\Psi_{x_{n+1}}^{p^s_{n+1}\wedge p^u_{n+1}}$.
Similarly, we prove that
$\Psi_{f^{-1}(x_{n+1})}^{p^s_n\wedge p^u_n}\overset{\ve}{\approx}\Psi_{x_n}^{p^s_n\wedge p^u_n}$.

\medskip
\noindent
(GPO2) The very definitions of $p^s_n,p^u_n$ guarantee that
$p^s_n=\min\{e^\ve p^s_{n+1},\delta_\ve Q_\ve(x_n)\}$ and
$p^u_{n+1}=\min\{e^\ve p^u_n,\delta_\ve Q_\ve(x_{n+1})\}$.

\medskip
\noindent
{\sc Claim 4:} $\{\Psi_{x_n}^{p^s_n,p^u_n}\}_{n\in\Z}$ shadows $x$.

\medskip
By (${\rm a}_n$) with $i=0$, we have
$\Psi_{f^n(x)}^{p^s_n\wedge p^u_n}\overset{\ve}{\approx}\Psi_{x_n}^{p^s_n\wedge p^u_n}$, hence 
by Proposition \ref{Lemma-overlap}(3) we have $f^n(x)=\Psi_{f^n(x)}(0)\in \Psi_{x_n}(R[p^s_n\wedge p^u_n])$,
thus $\{\Psi_{x_n}^{p^s_n,p^u_n}\}_{n\in\Z}$ shadows $x$.

\medskip
This concludes the proof of sufficiency.

\medskip
\noindent
{\em Proof of relevance.} The alphabet $\mathfs A$ might not a priori satisfy
the relevance condition, but we can easily reduce it to a sub-alphabet $\mathfs A'$ satisfying (1)--(3).
Call $v\in\mathfs A$ relevant if there is $\un v\in\mathfs A^\Z$ with $v_0=v$ s.t. $\un{v}$ shadows
a point in ${\rm NUH}_\chi^*$. Since ${\rm NUH}_\chi^*$ is $f$--invariant, every $v_i$ is relevant.
Then $\mathfs A'=\{v\in\mathfs A:v\text{ is relevant}\}$ is discrete
because $\mathfs A'\subset\mathfs A$, it is sufficient and relevant by definition.
\end{proof}

Let $\Sigma$ be the TMS associated to the graph with vertex set $\mathfs A$ given by
Theorem \ref{Thm-coarse-graining} and
edges $v\overset{\ve}{\to}w$. An element of $\Sigma$ is an $\ve$--gpo, hence
we define $\pi:\Sigma\to M$ by
$$
\{\pi[\{v_n\}_{n\in\Z}]\}:=V^s[\{v_n\}_{n\geq 0}]\cap V^u[\{v_n\}_{n\leq 0}].
$$
Here are the main properties of the triple $(\Sigma,\sigma,\pi)$.

\begin{proposition}\label{Prop-pi}
The following holds for all $\ve>0$ small enough.
\begin{enumerate}[{\rm (1)}]
\item Each $v\in\mathfs A$ has finite ingoing and outgoing degree, hence $\Sigma$ is locally compact.
\item $\pi:\Sigma\to M$ is H\"older continuous.
\item $\pi\circ\sigma=f\circ\pi$.
\item $\pi[\Sigma]\supset{\rm NUH}_\chi^*$.
\end{enumerate} 
\end{proposition}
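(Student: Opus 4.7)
The plan is to combine the coarse-graining theorem with the graph-transform machinery already built: items (1) and (4) are essentially bookkeeping on top of Theorem \ref{Thm-coarse-graining}, while items (2) and (3) follow from Proposition \ref{Prop-stable-manifolds}, Lemma \ref{Lemma-admissible-manifolds}, and Lemma \ref{Lemma-shadowing}.

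For (1), fix $v=\Psi_x^{p^s,p^u}\in\mathfs A$ and suppose $v\overset{\ve}{\to}w=\Psi_y^{q^s,q^u}$. Greediness (GPO2) immediately gives $q^s\geq e^{-\ve}p^s$, so $q^s$ has a lower bound depending only on $v$. The edge condition (GPO1) places $y$ in the ball $B(f(x),(p^s\wedge p^u)^8)$ and forces $\|\widetilde{C_\chi(f(x))}-\widetilde{C_\chi(y)}\|<(p^s\wedge p^u)^8$; combined with (A5)--(A7) and the definition of $Q_\ve$, this pins $\rho(y)$ and $\|C_\chi(y)^{-1}\|$ to intervals depending only on $v$, so $Q_\ve(y)$ is bounded below, and hence so is $q^u=\min\{e^\ve p^u,\delta_\ve Q_\ve(y)\}$. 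Applying discreteness (Theorem \ref{Thm-coarse-graining}(1)) with this common $t>0$ yields finite out-degree at $v$. The in-degree is bounded by the mirror argument: if $u=\Psi_z^{r^s,r^u}\to v$, then (GPO2) gives $r^u\geq e^{-\ve}p^u$ while the symmetric half of (GPO1) places $z$ close to $f^{-1}(x)$, producing a lower bound on $r^s$ by the same mechanism. Local compactness of $\Sigma$ then follows from the standard fact that a TMS with finite degree at every vertex has compact cylinders.

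For (3), Lemma \ref{Lemma-shadowing} says every $\ve$-gpo shadows a unique point. Writing $v_n=\Psi_{x_n}^{p^s_n,p^u_n}$ and $x=\pi(\un v)$, the shadowing condition $f^n(x)\in\Psi_{x_n}(R[p^s_n\wedge p^u_n])$ survives re-indexing, so $f(x)$ is shadowed by $\sigma\un v$, and uniqueness gives $f(\pi(\un v))=\pi(\sigma\un v)$. For (2), if $v_n=w_n$ for $|n|\leq N$, then Proposition \ref{Prop-stable-manifolds}(5) yields $d_{C^1}(V^s[\un v^+],V^s[\un w^+])\leq K\theta^N$ together with the analogous estimate for the unstable manifolds; uniform transversality of admissible manifolds (Lemma \ref{Lemma-admissible-manifolds}(2)) then converts $C^0$-proximity of the manifolds into proximity of their intersections, so $d(\pi(\un v),\pi(\un w))\lesssim K\theta^N$, which is H\"older in $d_\Sigma(\un v,\un w)=e^{-N}$ with exponent $-\log\theta$. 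For (4), sufficiency (Theorem \ref{Thm-coarse-graining}(2)) provides, for each $x\in{\rm NUH}_\chi^*$, an $\ve$-gpo $\un v\in\mathfs A^\Z$ shadowing $x$, and uniqueness in Lemma \ref{Lemma-shadowing} forces $x=\pi(\un v)$.

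The main obstacle is the finite-degree statement in (1): the lower bound on $Q_\ve(y)$ must be extracted from the $\ve$-overlap data using the explicit form of $Q_\ve$ (in particular the $\rho(x)^{72a/\beta}$ factor) and (A5)--(A7), and one needs the exponent $8$ appearing in $(p^s\wedge p^u)^8$ to be large enough to absorb the polynomial-in-$\rho$ losses that arise when transferring control from $f(x)$ to $y$. Everything else is a routine application of the machinery already established in the paper.
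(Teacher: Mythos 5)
Your overall strategy matches the paper's, and items (2), (3), (4) are essentially correct (for (2), the uniform transversality you want is the slope bound (AM3) in chart coordinates, not Lemma~\ref{Lemma-admissible-manifolds}(2) in $M$ where the angle is only comparable to $\alpha(x_0)$; the map $\Psi_{x_0}$ is $2$--Lipschitz, so the chart-level estimate transfers to $M$ with a uniform constant). The problem is in item (1), precisely where you flagged the difficulty.

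Your proposed route for the lower bound on $q^u$ is to extract a lower bound on $Q_\ve(y)$ from the $\ve$--overlap data in (GPO1). This cannot work. The quantity $\widetilde Q_\ve(y)$ involves $\rho(y)=d(\{f^{-1}(y),y,f(y)\},\mathfs D)$ and $\|C_\chi(f(y))^{-1}\|$. The overlap $\Psi_{f(x)}^{q^s\wedge q^u}\overset{\ve}{\approx}\Psi_y^{q^s\wedge q^u}$ pins $d(y,\mathfs D)$ and $\|C_\chi(y)^{-1}\|$ near their values at $f(x)$, and the other half of (GPO1) pins $d(f^{-1}(y),\mathfs D)$ near $d(x,\mathfs D)$, but nothing in (GPO1), nor in (A5)--(A7), controls $d(f(y),\mathfs D)$: a point far from $\mathfs D$ can be mapped arbitrarily close to $\mathfs D$ (this is precisely what happens for billiards). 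So $\rho(y)$, and hence $Q_\ve(y)$, is not controlled by the geometric data you invoke, and no choice of exponent in $(\cdot)^8$ repairs this. The in-degree argument has the same gap with $d(f^{-1}(z),\mathfs D)$.

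The fix --- and what the paper means by ``Part (1) follows from (GPO2)'' --- is purely algebraic and avoids $Q_\ve(y)$ entirely. From (GPO2), $p^s=\min\{e^\ve q^s,\delta_\ve Q_\ve(x)\}$ gives $q^s\geq e^{-\ve}p^s$. For $q^u=\min\{e^\ve p^u,\delta_\ve Q_\ve(y)\}$: either $q^u=e^\ve p^u\geq p^u$, or $q^u=\delta_\ve Q_\ve(y)\geq q^s\geq e^{-\ve}p^s$, where the middle inequality is the constraint $q^s\leq\delta_\ve Q_\ve(y)$ built into the definition of an $\ve$--double chart. Either way $q^s,q^u\geq e^{-\ve}(p^s\wedge p^u)>0$; equivalently, use the observation stated right after (GPO2) that $\tfrac{p^s\wedge p^u}{q^s\wedge q^u}=e^{\pm\ve}$. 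Discreteness (Theorem~\ref{Thm-coarse-graining}(1)) with $t=\tfrac{1}{2}e^{-\ve}(p^s\wedge p^u)$ then bounds the out-degree; the in-degree is symmetric.
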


Part (1) follows from (GPO2), part (2) follows from Proposition \ref{Prop-graph-transform},
part (3) is obvious, and part (4) follows from Theorem \ref{Thm-coarse-graining}(2).
It is important noting that $(\Sigma,\sigma,\pi)$ does {\em not} satisfy Theorem \ref{Thm-main},
since $\pi$ might be (and usually is) infinite-to-one. We use $\pi$ to induce a locally
finite cover of ${\rm NUH}_\chi^\#$, which will then be refined to a partition of ${\rm NUH}_\chi^\#$
that will lead to the proof of Theorem \ref{Thm-main}.

\section{The inverse problem}

Our goal is to analyze when $\pi$ loses injectivity. More specifically, given that
$\pi(\un{v})=\pi(\un{w})$ we want to compare $v_n$ and $w_n$, and show that they
are uniquely defined ``up to bounded error''. We do this under the additional assumption
that $\un{v},\un{w}\in\Sigma^\#$. Remind that $\Sigma^\#$ is the {\em recurrent set} of $\Sigma$:
$$
\Sigma^\#:=\left\{\un v\in\Sigma:\exists v,w\in V\text{ s.t. }\begin{array}{l}v_n=v\text{ for infinitely many }n>0\\
v_n=w\text{ for infinitely many }n<0
\end{array}\right\}.
$$
The main result is the following.

\begin{theorem}[Inverse theorem]\label{Thm-inverse}
The following holds for $\ve>0$ small enough.
If $\{\Psi_{x_n}^{p^s_n,p^u_n}\}_{n\in\Z},\{\Psi_{y_n}^{q^s_n,q^u_n}\}_{n\in\Z}\in\Sigma^\#$ satisfy
$\pi[\{\Psi_{x_n}^{p^s_n,p^u_n}\}_{n\in\Z}]=\pi[\{\Psi_{y_n}^{q^s_n,q^u_n}\}_{n\in\Z}]$ then:
\begin{enumerate}[{\rm (1)}]
\item $d(x_n,y_n)<25^{-1}\max\{p^s_n\wedge p^u_n,q^s_n\wedge q^u_n\}$.
\item $\tfrac{\sin\alpha(x_n)}{\sin\alpha(y_n)}=e^{\pm\sqrt{\ve}}$ and
$|\cos\alpha(x_n)-\cos\alpha(y_n)|<\sqrt{\ve}$.
\item $\tfrac{s(x_n)}{s(y_n)}=e^{\pm 4\sqrt{\ve}}$ and $\tfrac{u(x_n)}{u(y_n)}=e^{\pm 4\sqrt{\ve}}$.
\item $\tfrac{Q_\ve(x_n)}{Q_\ve(y_n)}=e^{\pm \sqrt[3]{\ve}}$.
\item $\tfrac{p^s_n}{q^s_n}=e^{\pm\sqrt[3]{\ve}}$ and $\tfrac{p^u_n}{q^u_n}=e^{\pm\sqrt[3]{\ve}}$.
\item $(\Psi_{y_n}^{-1}\circ\Psi_{x_n})(v)=(-1)^{\sigma_n}v+\delta_n+\Delta_n(v)$ for $v\in R[10Q_\ve(x_n)]$,
where $\sigma_n\in\{0,1\}$, $\delta_n$ is a vector with $\|\delta_n\|<10^{-1}(q^s_n\wedge q^u_n)$ and
$\Delta_n$ is a vector field s.t. $\Delta_n(0)=0$ and $\|d\Delta_n\|_0<\sqrt[3]{\ve}$ on $R[10Q_\ve(x_n)]$.
\end{enumerate}
\end{theorem}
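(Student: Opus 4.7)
The plan is to follow the strategy of \cite[\S5]{Sarig-JAMS} and \cite{Lima-Sarig}, with adaptations for the unbounded derivatives and for the fact that here $q_\ve$ is defined as a minimum (so Lemma \ref{Lemma-q^s}(1) holds).

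\textbf{Step 1 (coarse proximity).} Set $p:=\pi(\un v)=\pi(\un w)$. By the Shadowing part of Proposition \ref{Prop-stable-manifolds}(3) and Lemma \ref{Lemma-shadowing}, for every $n\in\Z$ the point $f^n(p)$ lies in both $\Psi_{x_n}(R[p^s_n\wedge p^u_n])$ and $\Psi_{y_n}(R[q^s_n\wedge q^u_n])$. Using that $\Psi_{x_n}$ is $2$--Lipschitz (Lemma \ref{Lemma-Pesin-chart}) and that $f^n(p)$ is the unique intersection $V^s[\un v]\cap V^u[\un v]$, which by Lemma \ref{Lemma-admissible-manifolds}(1) sits within $10^{-2}(p^s_n\wedge p^u_n)$ of $x_n$ in the chart, a triangle inequality against $y_n$ yields (1).

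\textbf{Step 2 (coincidence of invariant manifolds; comparison of $C_\chi$).} Both gpo's are stable/unstable manifolds of the same orbit through $p$: by the Invariance and Hyperbolicity parts of Proposition \ref{Prop-stable-manifolds}, two forward-contracting admissible curves through $f^n(p)$ must have equal germs. In the chart $\Psi_{x_n}$ the tangent to $V^s[\un v]$ at $x_n$ is the $e_1$--axis, whose image under $C_\chi(x_n)$ is $e^s_{x_n}/s(x_n)$. Comparing with the analogous statement in $\Psi_{y_n}$ and parallel-transporting via (A1) shows that $e^s_{x_n}=\pm e^s_{y_n}+O((p^s_n\wedge p^u_n)^{\beta/4})$ and similarly for $e^u$; the sign is the $(-1)^{\sigma_n}$ that will appear in (6). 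Item (2) follows from these comparisons together with $\sin\alpha=\sin\angle(e^s,e^u)$.

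\textbf{Step 3 (recurrence and comparison of $s,u$).} The quantities $s(x_n),u(x_n)$ are infinite sums of $e^{k\chi}\|df^k e^{s/u}\|$. To compare $s(x_n)$ with $s(y_n)$, I match term by term: the graph-transform contraction (Proposition \ref{Prop-graph-transform}(1)) forces the stable tangent directions at $f^k(x_n)$ and $f^k(y_n)$ to agree with geometric decay rate $e^{-\chi/2}$, while the shadowing keeps both inside the chart at $f^k(p)$, where $f_{x,y}$ is close to its linear diagonal part (Theorem \ref{Thm-non-linear-Pesin-2}). Here the hypothesis $\un v,\un w\in\Sigma^\#$ is essential: it provides subsequences $n_k\to\pm\infty$ along which $p^s_{n_k}\wedge p^u_{n_k}$ remains bounded below, so chart sizes do not collapse and the series comparison runs uniformly. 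Summing the geometric tails gives $s(x_n)/s(y_n)=e^{\pm4\sqrt\ve}$, and the symmetric backward argument handles $u$; this is (3). Item (4) follows from (2)--(3) and the definition of $Q_\ve$ (snapping into $I_\ve$ costs at most $e^\ve$), and (5) follows from (4) by iterating the greedy recursion (GPO2) as in Lemma \ref{Lemma-q^s}. Finally, items (1), (2), (4) together with the sign from Step 2 say precisely that $\Psi_{x_n}^{\eta}\overset{\ve}{\approx}\pm\Psi_{y_n}^{\eta}$ for $\eta$ comparable to $p^s_n\wedge p^u_n$, so Proposition \ref{Lemma-overlap}(4) yields (6) on setting $\delta_n=(\Psi_{y_n}^{-1}\circ\Psi_{x_n})(0)$ and $\Delta_n(v)=(\Psi_{y_n}^{-1}\circ\Psi_{x_n})(v)-(-1)^{\sigma_n}v-\delta_n$.

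\textbf{Main obstacle.} The hard step is Step 3. In Sarig's compact setting the comparison of $\|df^k e^s_{x_n}\|$ with $\|df^k e^s_{y_n}\|$ rests on a uniform bound for $\|df\|$; here we only have the polynomial bound $\|df^{\pm1}_y\|\leq\rho(x)^{-b}$ from (A5), which diverges near $\mathfs D$. The replacement is the $f$--adaptedness built into the alphabet $\mathfs A$ via Theorem \ref{Thm-coarse-graining}, together with the temperedness Lemma \ref{Lemma-temperedness}, which ensure that along orbits visited by $\ve$--gpo's the chart sizes $Q_\ve(f^k\cdot)$ shrink only subexponentially. Combined with the $e^\ve$ slack in (GPO2) and the fact that $q_\ve$ is a minimum (so $p^s_n\wedge p^u_n$ is itself controlled along the recurrent subsequence without needing the sum-trick of \cite{Sarig-JAMS}), this is just enough to close the geometric series and upgrade the crude bounds of Step 1 to the sharp $e^{\pm\sqrt{\ve}}$ and $e^{\pm\sqrt[3]\ve}$ estimates of the theorem.
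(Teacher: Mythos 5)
Your Steps 1--2 are in line with the paper's route (part (1) as in \cite[Prop.~5.3]{Sarig-JAMS}, part (2) via comparison of the tangent directions of the common stable/unstable manifolds), but Step 3 contains two genuine gaps.

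First, the term-by-term comparison of the series defining $s(x_n)$ and $s(y_n)$ does not work. The quantity $s(x_n)$ is a sum over the \emph{forward orbit of the chart center} $x_n$, and $x_n$ is not on the orbit of $p=\pi(\un v)$; only $p$ is shadowed by the gpo. Since $x_n$ and $y_n$ generically do not lie on $W^s(p)$, the orbits $f^k(x_n)$ and $f^k(y_n)$ separate, so the $k$-th terms cannot be matched. If instead you try to relate $s(x_n)$ to $s(f^n(p))$ by accumulating the one-step overlap errors of Proposition \ref{Lemma-overlap}(1), you get an error of order $\sum_k(p^s_{n+k}\wedge p^u_{n+k})^3$, which \emph{diverges} precisely because $\un v\in\Sigma^\#$ forces $p^s_{n+k}\wedge p^u_{n+k}$ to return to a fixed positive value infinitely often. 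The missing idea is the Improvement Lemma (Lemma \ref{Lemma-improvement}): hyperbolicity makes each graph-transform step strictly \emph{improve} the ratio $s(\mathfs F^s_{v,w}(V^s))/s(x)$ by a definite amount $Q_\ve(x)^{\beta/4}$ as long as the ratio exceeds $e^{\sqrt\ve}$, and recurrence applies this improvement infinitely often along the indices where $v_n$ repeats, driving $s(V^s[\{v_k\}_{k\geq n}])/s(x_n)$ into $e^{\pm\sqrt\ve}$. One then chains through $s(f^n(p))$ using Proposition \ref{Prop-stay-window}(1)(c). Recurrence is used to supply infinitely many improvement steps of bounded size, not (as you write) merely to keep chart sizes bounded below for a uniform series comparison.

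Second, you cannot derive (6) from Proposition \ref{Lemma-overlap}(4). The $\ve$-overlap hypothesis requires $d(x_n,y_n)+\|\widetilde{C_\chi(x_n)}-\widetilde{C_\chi(y_n)}\|<(\eta_1\eta_2)^4$, a polynomially small bound in the chart sizes, whereas parts (1)--(5) only give $d(x_n,y_n)<25^{-1}\max\{p^s_n\wedge p^u_n,q^s_n\wedge q^u_n\}$ and $\|\widetilde{C_\chi(x_n)}-(-1)^{\sigma_n}\widetilde{C_\chi(y_n)}\|\lesssim\sqrt\ve\,\|C_{y_n}^{-1}\|$. That is far too weak for the overlap machinery, which is why the conclusion of (6) is only $\|d\Delta_n\|_0<\sqrt[3]\ve$ rather than the $\ve(\eta_1\eta_2)^2$ of Proposition \ref{Lemma-overlap}(4). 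The paper instead proves (6) directly: Lemma \ref{Lemma-rotations} controls the rotation $R_{y_n}^{-1}R_{x_n}$ up to sign, and then one redoes the well-definedness and derivative estimates for $\Psi_{y_n}^{-1}\circ\Psi_{x_n}$ on $R[10Q_\ve(x_n)]$ by hand, using (A2)--(A4) and the definition of $Q_\ve$ to absorb the factors $d(y_n,\mathfs D)^{-a}$ and $\|C_{y_n}^{-1}\|$. Your formula for $\delta_n$ and $\Delta_n$ is the right one, but the estimates behind it need this separate argument. A smaller omission: for (5) you need Lemma \ref{Lemma-maximality} (that $p^s_n=\delta_\ve Q_\ve(x_n)$ for infinitely many $n>0$, again from $\Sigma^\#$) as the base case of the backward induction through (GPO2); "iterating the greedy recursion" alone has nowhere to start.
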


The difference from Theorem \ref{Thm-inverse} to \cite[Thm 5.2]{Sarig-JAMS} is
that the estimate on our part (6) holds only in the smaller rectangle $R[10Q_\ve(x_n)]$. Part (1) is proved as
in \cite[Prop. 5.3]{Sarig-JAMS}. 
Here is one of its consequences. We have
$d(x_n,y_n)<25^{-1}(p^s_n\wedge p^u_n+q^s_n\wedge q^u_n)<\ve[d(x_n,\mathfs D)^a+d(y_n,\mathfs D)^a]$,
hence
$$
d(x_n,\mathfs D)=d(y_n,\mathfs D)\pm d(x_n,y_n)=d(y_n,\mathfs D)\pm\ve[d(x_n,\mathfs D)^a+d(y_n,\mathfs D)^a].
$$
These estimates have two consequences. The first is that
\begin{equation}\label{equation-distances}
\frac{1-\ve}{1+\ve}\leq \frac{d(x_n,\mathfs D)}{d(y_n,\mathfs D)}\leq \frac{1+\ve}{1-\ve}
\end{equation}
and so, for $\ve>0$ is sufficiently small, it holds
$\tfrac{1}{2}\leq \tfrac{d(x_n,\mathfs D)^a}{d(y_n,\mathfs D)^a}\leq 2$.
The second consequence is that $x_n\in D_{y_n}$ and $y_n\in D_{x_n}$, since
\begin{align*}
&d(x_n,y_n)<\ve[d(x_n,\mathfs D)^a+d(y_n,\mathfs D)^a]<3\ve\min\{d(x_n,\mathfs D)^a,d(y_n,\mathfs D)^a\}\\
&<\min\{\mathfrak r(x_n),\mathfrak r(y_n)\}.
\end{align*}
Therefore we can take parallel transport with respect to either $x_n$ or $y_n$.

\medskip
The proofs of parts (2)--(6) use, as in \cite{Sarig-JAMS}, some auxiliary facts about admissible manifolds. Let
$\un v^+=\{v_n\}_{n\geq 0}$ be a positive $\ve$--gpo with $v_n=\Psi_{x_n}^{p^s_n,p^u_n}$.
By Proposition \ref{Prop-stable-manifolds}, $V^s[\un v^+]$ has the following property:
$f^n(V^s[\un v^+])\subset V^s[\{v_k\}_{k\geq n}]\subset \Psi_{x_n}(R[10Q_\ve(x_n)])$.
This motivates the definition of {\em staying in windows} as in \cite{Sarig-JAMS}:
given an $\ve$--double chart, say that $V^s\in\mathfs M^s(v)$ stays in windows if
there exists a positive $\ve$--gpo $\un v^+$ with $v_0=v$ and $s$--admissible
manifolds $W^s_n\in\mathfs M^s(v_n)$ s.t. $f^n(V^s)\subset W^s_n$ for all $n\geq 0$.
In particular, every $V^s[\un v^+]$ stays in windows, and the reverse statement is also true.
An analogous definition holds for $u$--admissible manifolds. Given $V^s\in \mathfs M^s[v]$
and $x\in V^s$, let $e^s_x\in T_xM$ denote the positively oriented vector tangent to $V^s$ at $x$.

\begin{proposition}\label{Prop-stay-window}
The following holds for all $\ve>0$ small enough.
\begin{enumerate}[{\rm (1)}]
\item  If $V^s\in\mathfs M^s[\Psi_x^{p^s,p^u}]$ stays in windows then for all $y,z\in V^s$ and $n\geq 0$: 
\begin{enumerate}[{\rm (a)}]
\item $d(f^n(y),f^n(z))<6p^s e^{-\frac{\chi}{2}n}$.
\item $\|df^n_y e^s_y\|\leq 6\|C_\chi(x)^{-1}\|e^{-\frac{\chi}{2}n}$.
\item $|\log\|df^n_y e^s_y\|-\log\|df^n_z e^s_z\||<Q_\ve(x)^{\beta/4}$.
\end{enumerate}
\item If $V^s\in\mathfs M^s[\Psi_x^{p^s,p^u}], U^s\in\mathfs M^s[\Psi_x^{q^s,q^u}]$ stay in windows
then either $V^s\subset U^s$ or $U^s\subset V^s$.
\end{enumerate}
Analogous statements hold for $u$--admissible manifolds that stay in windows.
\end{proposition}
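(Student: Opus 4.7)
The plan is to work entirely inside the Pesin charts $\Psi_{x_n}$ witnessing that $V^s$ stays in windows, using that by Theorem \ref{Thm-non-linear-Pesin-2} each $f_{x_n,x_{n+1}}$ is an $O(\ve)$--perturbation of $\mathrm{diag}(A_n,B_n)$ with $|A_n|\le e^{-\chi}$, combined with the slope bound $\|F_n'\|_0\le\tfrac12$ from (AM3) for the representing functions $F_n$ of the witnessing $s$--admissible manifolds $W^s_n\supset f^n(V^s)$. I would prove (1)(a) by a direct induction in chart coordinates: the action of $f_{x_n,x_{n+1}}$ on the graph $(t,F_n(t))$ contracts the first coordinate by $\le e^{-\chi/2}$, since the $O(\ve)$--perturbation acting on slope-$\tfrac12$ vectors is dominated by $|A_n|\le e^{-\chi}$ for $\ve$ small; hence $|t_n-s_n|\le 2p^s e^{-\chi n/2}$, and Lemma \ref{Lemma-Pesin-chart}(1) converts chart distance to ambient distance with constant $6$. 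Part (1)(b) follows by applying the same argument at the level of tangent vectors: write $e^s_y$ in chart coordinates as proportional to $d(\Psi_x)_{w_0}(1,F'(t_0))$, use $\|d(\Psi_x)^{-1}\|\le 2\|C_\chi(x)^{-1}\|$ to estimate the chart-norm of the pullback, iterate the same per-step $e^{-\chi/2}$ contraction, and close with the $2$--Lipschitz map $d\Psi_{x_n}$.

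For (1)(c), the bounded distortion estimate, I would split
$\bigl|\log\|df_{f^k(y)}e^s_{f^k(y)}\|-\log\|df_{f^k(z)}e^s_{f^k(z)}\|\bigr|$ into a base-point contribution (bounded via the uniform Hölder constant $\mathfrak K$ of (A6) by $\mathfrak K\, d(f^k(y),f^k(z))^\beta$) and a direction contribution (bounded by the Hölder norm of $F_k'$ times $|t_k-s_k|^{\beta/3}$). Telescoping over $k\ge 0$ and using (1)(a), both contributions sum to a geometric series of ratio at most $e^{-\beta\chi/6}$ whose total is $O((p^s)^{\beta/3})$, which is $\le Q_\ve(x)^{\beta/4}$ since $p^s\le\delta_\ve Q_\ve(x)$ and $\ve$ is small.

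Part (2) reduces to showing that both $V^s$ and $U^s$ are segments of the intrinsic Pesin local stable manifold $W^s_{\mathrm{loc}}(x)$: by (1)(a)--(1)(b) each consists of points whose forward orbits contract at rate $e^{-\chi/2}$, which in dimension two singles out the unique Oseledets stable direction. Thus $W^s_{\mathrm{loc}}(x)$ is a $C^{1+\beta/3}$ graph in $\Psi_x$ over an interval containing both $[-p^s,p^s]$ and $[-q^s,q^s]$, and if WLOG $p^s\le q^s$ then $V^s$ and $U^s|_{[-p^s,p^s]}$ both equal the restriction of this graph to $[-p^s,p^s]$, whence $V^s\subset U^s$. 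The main technical obstacle is (1)(c), where the non-uniform Hölder estimates coming from (A3) (with their $\rho^{-a}$ factors) must be kept strictly separate from the distortion estimates on $df$ itself coming from (A6) (with uniform constant $\mathfrak K$); this works because the $\rho^{-a}$ loss has already been absorbed in the definition of $Q_\ve(x)$, so the geometric series in (1)(c) sees only the uniform $\mathfrak K$ and the exponential contraction from (1)(a) survives intact.
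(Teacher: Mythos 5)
Your treatment of (1)(a), (1)(b), and (2) is structurally sound and matches the approach the paper inherits from Sarig: iterate the per-step $e^{-\chi/2}$ contraction in charts for (a)--(b), and for (2) use that both admissible manifolds that stay in windows must agree as graphs over the common interval. However, your account of (1)(c) contains a genuine error at the key technical point, and this is precisely the one place where the paper departs from \cite{Sarig-JAMS} and must do real work.

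You assert that the $\rho^{-a}$ losses from (A3) are ``kept strictly separate'' from the (A6) distortion, so that ``the geometric series in (1)(c) sees only the uniform $\mathfrak K$.'' This is false. To pass from the additive (A6) estimate on $\widetilde{df_{y_1}}-\widetilde{df_{y_2}}$ to the multiplicative bound $\bigl|\log\|df_y e^s_y\|-\log\|df_z e^s_z\|\bigr|$ that the telescoping argument needs, one must divide by $m(df_z)\ge\rho(x)^a$, and the ``direction'' piece already picks up $\|\widetilde{df_z}\|\le\rho(x)^{-a}$ from (A5). The resulting per-step bound is $\mathfrak K\,\rho(x_k)^{-2a}[d(f^k(y),f^k(z))^\beta+\|e^s_{f^k(y)}-P\,e^s_{f^k(z)}\|]$; the factor $\rho(x_k)^{-2a}$ sits inside the $k$-sum and grows as the orbit drifts toward $\mathfs D$. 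Likewise, the direction term $N_k$ is not controlled by $\Hol{\beta/3}(F_k')\cdot|t_k-s_k|^{\beta/3}$ alone: converting from chart-coordinate slope differences to ambient-vector differences costs a factor $\|C_\chi(x_k)^{-1}\|$, which also grows along the orbit. Neither growth has been ``pre-absorbed in $Q_\ve(x)$''; rather, the absorption has to be executed in the sum itself, playing $\rho(x_k)^{-2a}Q_\ve(x_k)^{\beta/36}\le\ve^{1/12}$ and $\|C_\chi(x_k)^{-1}\|Q_\ve(x_k)^{\beta/24}\le\ve^{1/8}$ against the geometric decay $d(f^k(y),f^k(z))\lesssim p^s_0 e^{-\chi k/2}$, and then using (GPO2) in the form $p^s_0\le e^{\ve k}p^s_k$ to trade the resulting $Q_\ve(x_k)^{-\beta/36}\sim(p^s_k)^{-\beta/36}$ back into $(p^s_0)^{-\beta/36}e^{\ve\beta k/36}$, which the exponential decay dominates for small $\ve$. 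Without this step your geometric series need not converge at all, because the per-step Hölder constant is not uniform. This trade-off is the entire content of the paper's adaptation of \cite[Prop.~6.3(3)]{Sarig-JAMS}, and your proposal skips it.
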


When $M$ is compact and $f$ is a $C^{1+\beta}$ diffeomorphism, this is \cite[Prop. 6.3 and 6.4]{Sarig-JAMS}.
The only adaptation we need to make is in part (1)(c), see Appendix B.
Because of part (1)(c), if $y,z\in V^s$ then $\tfrac{s(y)}{s(z)}=e^{\pm Q_\ve(x)^{\beta/4}}$,
therefore we can define $s(V^s):=s(\Psi_x(0,F(0)))$, where $F$ is the representing function of $V^s$.
Note that $s(V^s)$ might be infinite, in which case $s(y)$ is infinite for all $y\in V^s$.
A similar definition holds for $u$--admissible manifolds that stay in windows. 

\medskip
The proof of part (2) of Theorem \ref{Thm-inverse} is analogous to \cite[Prop. 6.5]{Sarig-JAMS}.
In the sequel we adapt the methods of \cite{Sarig-JAMS} to prove parts (3)--(6).

\subsection{Control of $s(x_n)$ and $u(x_n)$}
As in \cite{Sarig-JAMS}, the hyperbolicity of $f$ induces an improvement for $s$ and $u$.
Because of symmetry, we only state the result for $s$.

\begin{lemma}[Improvement lemma]\label{Lemma-improvement}
The following holds for all $\ve>0$ small enough. Let $v\overset{\ve}{\to}w$ with
$v=\Psi_x^{p^s,p^u},w=\Psi_y^{q^s,q^u}$, and assume $V^s\in\mathfs M^s[w]$ stays in windows.
\begin{enumerate}[{\rm (1)}]
\item If $s(V^s)<\infty$ then $s[\mathfs F^s_{v,w}(V^s)]<\infty$.
\item For $\xi\geq {\sqrt{\ve}}$, if $s(V^s)<\infty$ and $\tfrac{s(V^s)}{s(y)}=e^{\pm\xi}$
then $\tfrac{s(\mathfs F^s_{v,w}(V^s))}{s(x)}=e^{\pm(\xi-Q_\ve(x)^{\beta/4})}$.
\end{enumerate} 
\end{lemma}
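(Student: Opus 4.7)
The plan is to exploit the one-step recursion
\[
s(z)^2 \;=\; 2 + e^{2\chi}\|df_z e^s_z\|^2\, s(f(z))^2,
\]
a rearrangement of the geometric series defining $s(z)$ that holds at any point $z$ of an admissible manifold which stays in windows (with $e^s_z$ the positively oriented tangent). Let $W^s := \mathfs F_{v,w}^s(V^s)$, with representing function $G$, and set $\tilde x := \Psi_x(0,G(0))$ and $\tilde y := \Psi_y(0,F(0))$, so that $s(W^s)=s(\tilde x)$ and $s(V^s)=s(\tilde y)$. By the definition of the graph transform $f(W^s)\subset V^s$; in particular $f(\tilde x)\in V^s$, and $df_{\tilde x}e^s_{\tilde x}$ is tangent to $V^s$ at $f(\tilde x)$. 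Applying the recursion at $z=\tilde x$ and $z=x$ and taking the ratio yields the master identity
\[
s(\tilde x)^2 \;=\; 2 + \alpha^2\beta^2\bigl(s(x)^2-2\bigr), \qquad \alpha:=\tfrac{\|df_{\tilde x}e^s_{\tilde x}\|}{\|df_x e^s_x\|},\ \ \beta:=\tfrac{s(f(\tilde x))}{s(f(x))}.
\]
Part $(1)$ is then immediate from finiteness: $\alpha<\infty$ by (A5), $s(f(\tilde x))$ is comparable to $s(V^s)<\infty$ via Proposition~\ref{Prop-stay-window}$(1)(c)$, and $s(x)<\infty$ because $x\in{\rm NUH}_\chi$.

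For part $(2)$ I would first estimate $T:=\alpha\beta$. Decomposing
\[
\beta \;=\; \tfrac{s(f(\tilde x))}{s(\tilde y)}\cdot\tfrac{s(\tilde y)}{s(y)}\cdot\tfrac{s(y)}{s(f(x))},
\]
the three factors are controlled by Proposition~\ref{Prop-stay-window}$(1)(c)$ (giving $e^{\pm Q_\ve(y)^{\beta/4}}$, since $f(\tilde x),\tilde y\in V^s$), by the hypothesis (giving $e^{\pm\xi}$), and by Proposition~\ref{Lemma-overlap}$(1)$ applied to the $\ve$-overlap $\Psi_{f(x)}^{q^s\wedge q^u}\overset{\ve}{\approx}\Psi_y^{q^s\wedge q^u}$ furnished by (GPO1) (giving $e^{\pm(q^s\wedge q^u)^3}$). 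For $\alpha$ I would work inside the Pesin chart $\Psi_x$: the pullback of $e^s_{\tilde x}$ is tangent to the graph of $G$ at $(0,G(0))$, with slope bounded by $\tfrac{1}{2}(p^s\wedge p^u)^{\beta/3}$ by (AM2); combining this with the H\"older estimate $\|h\|_{1+\beta/2}<\ve$ on the nonlinear part of $f_x$ from Theorem~\ref{Thm-non-linear-Pesin}$(2)$ bounds $|\log\alpha|$ by a constant multiple of $Q_\ve(x)^{\beta/3}$. Using Lemma~\ref{Lemma-q} to compare $Q_\ve(y)$ with $Q_\ve(x)$, one arrives at $|\log T|\leq \xi + \delta$ with $\delta$ of the same order as $Q_\ve(x)^{\beta/4}$.

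The heart of the argument is then extracting a strict improvement from the algebraic identity
\[
\frac{s(\tilde x)^2}{s(x)^2} \;=\; T^2 + \frac{2(1-T^2)}{s(x)^2},
\]
which by itself would only give $|\log(s(\tilde x)/s(x))|\leq|\log T|$; the sign of $2(1-T^2)/s(x)^2$ always opposes $\log T^2$, and this is the source of the gain. To quantify, combine Lemma~\ref{Lemma-linear-reduction}$(1)$, which yields $s(x)\leq\|C_\chi(x)^{-1}\|_{\rm Frob}$, with the defining bound $\|C_\chi(x)^{-1}\|_{\rm Frob}\leq \ve^{1/8}Q_\ve(x)^{-\beta/24}$ coming from $Q_\ve$, obtaining
\[
\tfrac{1}{s(x)^2}\;\geq\;\ve^{-1/4}\,Q_\ve(x)^{\beta/12}.
\]
Split into cases. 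If $|\log T|\leq Q_\ve(x)^{\beta/4}$, the conclusion is direct since $\xi\geq\sqrt\ve$ is much larger than $Q_\ve(x)^{\beta/4}$. If $|\log T|>Q_\ve(x)^{\beta/4}$, a first-order Taylor expansion of $\log(1-r(1-T^{-2}))$ with $r=2/s(x)^2$ delivers an improvement of at least $(1-T^{-2})/s(x)^2\gtrsim |\log T|\cdot\ve^{-1/4}Q_\ve(x)^{\beta/12}\gtrsim\ve^{1/4}Q_\ve(x)^{\beta/12}$, which dominates $Q_\ve(x)^{\beta/4}$ for small $\ve$ (using $Q_\ve(x)^{\beta/6}\leq\ve^{1/2}$, which follows from $Q_\ve(x)\leq \ve^{3/\beta}$). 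The two sign regimes $\log T\gtrless 0$ are symmetric. The main obstacle is precisely this quantitative bookkeeping: one must verify that the hyperbolicity-driven improvement $\ve^{1/4}Q_\ve(x)^{\beta/12}$ strictly dominates both the target $Q_\ve(x)^{\beta/4}$ and the cumulative error $\delta$ coming from the overlap lemma, Proposition~\ref{Prop-stay-window}$(1)(c)$, and the H\"older control of $f_x$, which is the narrowest margin in the argument.
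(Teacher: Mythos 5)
Your master identity $s(\tilde x)^2=2+\alpha^2\beta^2\bigl(s(x)^2-2\bigr)$, the decomposition of $\beta$ into three controllable factors, and the observation that the correction term $\tfrac{2(1-T^2)}{s(x)^2}$ always pushes $\log\tfrac{s(\tilde x)}{s(x)}$ toward zero are all correct, and they capture the same mechanism as the paper (which applies the one-step recursion for $s$ at the preimages $f^{-1}(q)$, $f^{-1}(y)$ and extracts the gain from the analogous factor $\text{I}=\tfrac{2+e^{2\xi}A}{2+A}$ cited from Sarig). Part $(1)$ is also fine. The difference is one of packaging: you tie the hyperbolicity-driven gain to $\log T$, while the paper ties it directly to $\xi$ by using the worst-case estimate $s(q)^2\le e^{2\xi}s(y)^2$ \emph{before} forming the ratio, and this difference is where your argument breaks.

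The gap is in the second case of your split. You split at $|\log T|>Q_\ve(x)^{\beta/4}$ and then write $|\log T|\cdot\ve^{-1/4}Q_\ve(x)^{\beta/12}\gtrsim\ve^{1/4}Q_\ve(x)^{\beta/12}$; that step silently needs $|\log T|\gtrsim\sqrt\ve$, which does not follow: $Q_\ve(x)^{\beta/4}\le\bigl(\ve^{3/\beta}\bigr)^{\beta/4}=\ve^{3/4}\ll\sqrt\ve$, so $|\log T|$ could be, say, $2\ve^{3/4}$ and still land in your second case. For such $|\log T|$ the improvement $\tfrac{1-T^{-2}}{s(x)^2}\approx\tfrac{2|\log T|}{s(x)^2}\ge 2|\log T|\,\ve^{-1/4}Q_\ve(x)^{\beta/12}$ gives only $\gtrsim\ve^{1/2}Q_\ve(x)^{\beta/12}$, and the comparison you need, $\ve^{1/2}Q_\ve(x)^{\beta/12}\ge Q_\ve(x)^{\beta/4}$, reduces to $Q_\ve(x)^{\beta/6}\le\ve^{1/2}$ --- which is an inequality that can be tight (it \emph{is} tight when $Q_\ve(x)=\ve^{3/\beta}$), so there is no margin left to absorb the accumulated errors $\delta$ and actually strictly win. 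The fix is to split at $|\log T|\le\xi-Q_\ve(x)^{\beta/4}$ (conclude by the monotonicity $|\log\tfrac{s(\tilde x)}{s(x)}|\le|\log T|$) versus $|\log T|>\xi-Q_\ve(x)^{\beta/4}$; in the latter case $|\log T|\ge\sqrt\ve-\ve^{3/4}\ge\tfrac12\sqrt\ve$, and your chain of inequalities then goes through with a genuine factor of $\ve^{-1/4}$ to spare against $Q_\ve(x)^{\beta/4}$ and the error terms.
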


Note that the ratio improves. 

\begin{proof}
When $M$ is compact and $f$ is a $C^{1+\beta}$ diffeomorphism,
this is \cite[Lemma 7.2]{Sarig-JAMS}, and the proof of part (1) is identical.
Part (2) requires some finer estimates.

\medskip
Let $F,G$ be the representing functions of $V^s,\mathfs F^s_{v,w}(V^s)$,
and let $q:=\Psi_y(0,F(0))$, $p:=\Psi_x(0,G(0))$. Then
$\tfrac{s(\mathfs F^s_{v,w}(V^s))}{s(x)}=\tfrac{s(p)}{s(x)}=\tfrac{s(p)}{s(f^{-1}(q))}\cdot\tfrac{s(f^{-1}(q))}{s(f^{-1}(y))}\cdot\tfrac{s(f^{-1}(y))}{s(x)}$. We have:
\begin{enumerate}[$\circ$]
\item $p,f^{-1}(q)\in \mathfs F^s_{v,w}(V^s)$, hence Proposition \ref{Prop-stay-window}(1)(c) implies
$\tfrac{s(p)}{s(f^{-1}(q))}=e^{\pm Q_\ve(x)^{\beta/4}}$.
\item Since $(p^s\wedge p^u)^3(q^s\wedge q^u)^3\ll Q_\ve(x)^{\beta/4}$,
Proposition \ref{Lemma-overlap}(1) implies $\tfrac{s(f^{-1}(y))}{s(x)}=e^{\pm Q_\ve(x)^{\beta/4}}$.
\end{enumerate}
Thus it is enough to show that $\tfrac{s(f^{-1}(q))}{s(f^{-1}(y))}=e^{\pm(\xi-3Q_\ve(x)^{\beta/4})}$.
We show one side of the inequality (the other is similar).
Note that this is the term that gives the improvement. As in \cite[pp. 375]{Sarig-JAMS}, we have
$$
\tfrac{s(f^{-1}(q))^2}{s(f^{-1}(y))^2}\leq
\underbrace{\left(\tfrac{2+e^{2\xi+2\chi}s(y)^2\|df e^s_{f^{-1}(y)}\|^2}{2+e^{2\chi}s(y)^2\|df e^s_{f^{-1}(y)}\|^2}\right)}_{= \text{ I}}\ 
\underbrace{{\rm exp}\left(2|\log \|df e^s_{f^{-1}(q)}\|-\log \|df e^s_{f^{-1}(y)}\||\right)}_{=\text{ II}}.
$$
We estimate I as in \cite[pp. 376]{Sarig-JAMS}: I $\leq e^{2\xi-7Q_\ve(x)^{\beta/4}}$.
Therefore it suffices to show that $\text{II}\leq e^{Q_\ve(x)^{\beta/4}}$.
Since $\|df e^s_{f^{-1}(z)}\|=\|df^{-1}e^s_z\|^{-1}$,
$\text{II}={\rm exp}(2|\log \|df^{-1}e^s_q\|-\log \|df^{-1}e^s_{y}\||)$, hence
by the claim in the proof of Proposition \ref{Prop-stay-window} (Appendix B):
\begin{equation}\label{estimate-II}
\log(\text{II})\leq2\mathfrak K\rho(y)^{-2a}[d(q,y)^\beta+\|e^s_q-P_{y,q}e^s_y\|].
\end{equation}
Since $q=\Psi_y(0,G(0))$ and $y=\Psi_y(0,0)$, Lemma \ref{Lemma-Pesin-chart}(1) implies that
$d(q,y)\leq 2|G(0)|\leq 500^{-1}(q^s\wedge q^u)\leq 500^{-1}e^\ve(p^s\wedge p^u)$, therefore
$d(q,y)<Q_\ve(x),Q_\ve(y)$. Hence for small $\ve>0$:
\begin{align*}
&2\mathfrak K\rho(y)^{-2a}d(q,y)^\beta\leq 2\mathfrak K\rho(y)^{-2a}Q_\ve(y)^{3\beta/4}Q_\ve(x)^{\beta/4}
\leq 2\mathfrak K\rho(y)^{-2a}Q_\ve(y)^{\beta/36}Q_\ve(x)^{\beta/4}\\
&\leq 2\mathfrak K\ve^{1/12}Q_\ve(x)^{\beta/4}<\tfrac{1}{2}Q_\ve(x)^{\beta/4}.
\end{align*}
To bound the second term of (\ref{estimate-II}), we first estimate $\sin\angle(e^s_q,P_{y,q}e^s_y)$.
Since $e^s_y$ is the unitary vector in the direction of
$d(\Psi_y)_0\colvec{1\\0}=d(\exp{y})_0\circ C_\chi(y)\colvec{1\\0}$
and $e^s_q$ is the unitary vector in the direction of
$d(\Psi_y)_{(0,G(0))}\colvec{1\\ G'(0)}=d(\exp{y})_{C_\chi(y)\colvec[.6]{0\\G(0)}}\circ C_\chi(y)\colvec{1\\ G'(0)}$,
the angles they define are the same. In other words, if
$$
A=\widetilde{d(\exp{y})_0\circ C_\chi(y)},B=\widetilde{d(\exp{y})_{C_\chi(y)\colvec[.6]{0\\G(0)}}\circ C_\chi(y)},
v_1=\colvec{1\\0},v_2=\colvec{1\\ G'(0)}
$$
then $\sin\angle(e^s_q,P_{y,q}e^s_y)=\sin\angle(Av_1,Bv_2)$. Using (\ref{gen-ineq-angles}) 
with $L=A$, $v=v_1$, $w=A^{-1}Bv_2$, we get
\begin{align*}
&|\sin\angle(Av_1,Bv_2)|\leq \|A\|\|A^{-1}\||\sin\angle(v_1,A^{-1}Bv_2)|\\
&\leq \|C_\chi(y)^{-1}\|[|\sin\angle(v_1,v_2)|+|\sin\angle(v_2,A^{-1}Bv_2)|].
\end{align*}
We have $|\sin\angle(v_1,v_2)|\leq |G'(0)|\leq \tfrac{1}{2}(q^s\wedge q^u)^{\beta/3}\leq
\tfrac{e^{\frac{\beta\ve}{3}}}{2}(p^s\wedge p^u)^{\beta/3}$, therefore for small $\ve>0$ it holds
$|\sin\angle(v_1,v_2)|\leq Q_\ve(x)^{\beta/3},Q_\ve(y)^{\beta/3}$. In particular
$|\sin\angle(v_1,v_2)|\leq Q_\ve(y)^{\beta/12}Q_\ve(x)^{\beta/4}$. Also, by (A3):
\begin{align*}
&\|A^{-1}B-{\rm Id}\|\leq \|A^{-1}\|\|A-B\|\leq
\|C_\chi(y)^{-1}\| \left\|\widetilde{d(\exp{y})_0}-\widetilde{d(\exp{y})_{C_\chi(y)\colvec[.6]{0\\ G(0)}}}\right\|\\
&\leq \|C_\chi(y)^{-1}\|\rho(y)^{-a}|G(0)|\leq  \|C_\chi(y)^{-1}\|\rho(y)^{-a}Q_\ve(y)^{1-\frac{\beta}{4}}Q_\ve(x)^{\beta/4}\\
&\leq Q_\ve(y)^{1-\frac{11\beta}{36}}Q_\ve(x)^{\beta/4}<\tfrac{1}{4}Q_\ve(y)^{\beta/12}Q_\ve(x)^{\beta/4}\ll 1.
\end{align*}
This implies that $v_2,A^{-1}Bv_2$ are almost unitary vectors, therefore
$$
|\sin\angle(v_2,A^{-1}Bv_2)|\leq 2\|v_2-A^{-1}Bv_2\|\leq 4\|A^{-1}B-{\rm Id}\|<Q_\ve(y)^{\beta/12}Q_\ve(x)^{\beta/4},$$
thus $|\sin\angle(e^s_q,P_{y,q}e^s_y)|<2\|C_\chi(y)^{-1}\|Q_\ve(y)^{\beta/12}Q_\ve(x)^{\beta/4}$.
Since $\|e^s_q\|=\|P_{y,q}e^s_y\|=1$ and the angle between them is small,
$\|e^s_q-P_{y,q}e^s_y\|\leq 2|\sin\angle(e^s_q,P_{y,q}e^s_y)|<4\|C_\chi(y)^{-1}\|Q_\ve(y)^{\beta/12}Q_\ve(x)^{\beta/4}$.
The conclusion is that for small $\ve>0$:
\begin{align*}
&2\mathfrak K\rho(y)^{-2a}\|e^s_q-P_{y,q}e^s_y\|\leq
8\mathfrak K\|C_\chi(y)^{-1}\|\rho(y)^{-2a}Q_\ve(y)^{\beta/12}Q_\ve(x)^{\beta/4}\\
&\leq 8\mathfrak K\|C_\chi(y)^{-1}\|Q_\ve(y)^{\beta/24}\rho(y)^{-2a}Q_\ve(y)^{\beta/36}Q_\ve(x)^{\beta/4}\\
&\leq 8\mathfrak K\ve^{5/24}Q_\ve(x)^{\beta/4}<\tfrac{1}{2}Q_\ve(x)^{\beta/4}.
\end{align*}
Hence (\ref{estimate-II}) implies that $\text{II}<e^{Q_\ve(x)^{\beta/4}}$.
\end{proof}

We are now ready to prove part (3) of Theorem \ref{Thm-inverse}.
\begin{proposition}
The following holds for all $\ve>0$ small enough.
If $\{\Psi_{x_n}^{p^s_n,p^u_n}\}_{n\in\Z}$, $\{\Psi_{y_n}^{q^s_n,q^u_n}\}_{n\in\Z}\in\Sigma^\#$ satisfy
$\pi[\{\Psi_{x_n}^{p^s_n,p^u_n}\}_{n\in\Z}]=\pi[\{\Psi_{y_n}^{q^s_n,q^u_n}\}_{n\in\Z}]$ then for all $n\in\Z$:
$$
\tfrac{s(x_n)}{s(y_n)}=e^{\pm 4\sqrt{\ve}}\text{ and }\tfrac{u(x_n)}{u(y_n)}=e^{\pm 4\sqrt{\ve}}.
$$
\end{proposition}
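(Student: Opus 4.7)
The plan is to compare $s(x_n)$ with $s(y_n)$ by chaining through the two stable manifolds $V^s[\sigma^n\un v]$ and $V^s[\sigma^n\un w]$, which share the common point $\pi:=\pi(\sigma^n\un v)=\pi(\sigma^n\un w)$. By shift-invariance of $\Sigma^\#$ it suffices to handle $n=0$, and the estimate for $u(x_n)/u(y_n)$ is symmetric, using backward $\ve$--gpo's and unstable admissible manifolds. The key decomposition is
\[
\log\tfrac{s(x_0)}{s(y_0)} = -\log\tfrac{s(V^s[\un v^+])}{s(x_0)} + \log\tfrac{s(V^s[\un v^+])}{s(V^s[\un w^+])} + \log\tfrac{s(V^s[\un w^+])}{s(y_0)},
\]
so I will bound the first and third summands by $\sqrt\ve$ each and the middle one by $2\sqrt\ve$, giving the claimed $4\sqrt\ve$.

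The first summand (and by symmetry the third) is controlled by iterating the Improvement Lemma (Lemma \ref{Lemma-improvement}) backward in time. Since $\un v\in\Sigma^\#$ there is some $v^*=\Psi_{x^*}^{p^{s,*},p^{u,*}}\in\mathfs A$ with $v_{n_j}=v^*$ for a sequence $n_j\to\infty$. At each $n_j$, $V^s[\sigma^{n_j}\un v^+]$ is $s$--admissible at $v^*$, and Lemma \ref{Lemma-improvement}(1) together with compactness of the family of admissible manifolds at the fixed chart $v^*$ yields a finite upper bound $M^*$, depending only on $v^*$, for $|\log(s(V^s[\sigma^{n_j}\un v^+])/s(x^*))|$. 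Setting $\xi_k:=|\log(s(V^s[\sigma^k\un v^+])/s(x_k))|$, Lemma \ref{Lemma-improvement}(2) (applied with bound $\max\{\xi_{k+1},\sqrt\ve\}$) gives $\max\{\xi_k,\sqrt\ve\}\leq\max\{\xi_{k+1},\sqrt\ve\}-Q_\ve(x_k)^{\beta/4}$ whenever this right-hand side exceeds $\sqrt\ve$. Each backward step through a return time $k=n_i$ (where $x_k=x^*$) contributes at least the positive amount $Q_\ve(x^*)^{\beta/4}$ to the cumulative improvement, and because infinitely many such returns occur between $0$ and $n_j$ as $j\to\infty$, starting from $\xi_{n_j}\leq M^*$ and iterating down to $k=0$ eventually forces $\xi_0\leq\sqrt\ve$.

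For the middle summand, I identify $s(V^s[\un v^+])$ and $s(V^s[\un w^+])$ with values of $s$ at $\pi$ using the tangents to the respective manifolds. The two tangents at $\pi$ coincide: both are contracting directions for $df^n$ as $n\to\infty$ (by Proposition \ref{Prop-stable-manifolds}(4)), and if they were distinct then all of $T_\pi M$ would be contracted, contradicting the unstable direction at $\pi$ supplied by the negative halves of $\un v$ and $\un w$. Thus $s(\pi)$ is unambiguous in this common direction, and Proposition \ref{Prop-stay-window}(1)(c) applied on each manifold yields $s(V^s[\un v^+])=e^{\pm Q_\ve(x_0)^{\beta/4}}s(\pi)$ and $s(V^s[\un w^+])=e^{\pm Q_\ve(y_0)^{\beta/4}}s(\pi)$. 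Using $Q_\ve\leq\ve^{3/\beta}$ one has $Q_\ve^{\beta/4}\leq\ve^{3/4}<\sqrt\ve$ for $\ve$ small, so the middle ratio is $e^{\pm 2\sqrt\ve}$ as required.

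I expect the main obstacle to be the backward iteration of the Improvement Lemma: establishing the uniform initial bound $M^*$ at the recurrent chart $v^*$ and carefully tracking the sequence $(\xi_k)$ across the regimes $\xi_{k+1}>\sqrt\ve$ and $\xi_{k+1}\leq\sqrt\ve$ so that monotonicity is preserved throughout. The identification of a common tangent at $\pi$ is the other delicate ingredient, but follows cleanly from the two-sided hyperbolic structure encoded in $\Sigma^\#$.
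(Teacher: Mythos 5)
Your overall architecture matches the paper's: the same factor decomposition of $\log\tfrac{s(x_0)}{s(y_0)}$, the improvement lemma iterated through the infinitely many returns to a fixed symbol $v^*$, and Proposition \ref{Prop-stay-window}(1)(c) at the common point $\pi(\un v)=\pi(\un w)$ (whose tangency you correctly, and usefully, make explicit). The $4\sqrt\ve$ bookkeeping is also right.

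The gap is in how you initialize the improvement iteration. You claim that Lemma \ref{Lemma-improvement}(1) together with ``compactness of the family of admissible manifolds at the fixed chart $v^*$'' yields a finite bound $M^*$ for $|\log(s(V^s[\sigma^{n_j}\un v^+])/s(x^*))|$. This does not work: $s(V)$ is defined by the infinite series $\sqrt2\bigl(\sum_{n\geq0}e^{2n\chi}\|df^n e^s\|^2\bigr)^{1/2}$, it takes the value $+\infty$ on some admissible manifolds at $v^*$, and it is not upper semicontinuous on the ($C^1$--compact) family $\mathfs M^s(v^*)$; compactness of the domain therefore gives no uniform bound, and Lemma \ref{Lemma-improvement}(1) only propagates finiteness under a single graph transform --- it does not create it. In fact you never establish that any of the quantities $\xi_{n_j}$ you iterate on is finite, and your plan applies the improvement lemma directly to the limit manifolds $V^s[\sigma^k\un v^+]$, which presupposes the finiteness you are trying to prove. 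The missing ingredient is exactly the one the paper flags as the first step: the \emph{relevance} of the alphabet (Theorem \ref{Thm-coarse-graining}(3)). Relevance supplies a chain $\un u\in\mathfs A^{\Z}$ with $u_0=v^*$ shadowing some $z\in{\rm NUH}_\chi^*$; since $z\in{\rm NUH}_\chi$ one has $s(z)<\infty$, hence $s(V^s[\un u^+])<\infty$ by Proposition \ref{Prop-stay-window}(1)(c), and this single manifold, with its finite ratio $\xi:=|\log(s(V^s[\un u^+])/s(x^*))|$, is the seed: one realizes $V^s[\un v^+]$ as the limit of the finite compositions $\mathfs F^s_{v_0,v_1}\circ\cdots\circ\mathfs F^s_{v_{n_j-1},v_{n_j}}(V^s[\un u^+])$, where finiteness is preserved at every stage by Lemma \ref{Lemma-improvement}(1) and the ratio is driven down to $e^{\pm\sqrt\ve}$ by Lemma \ref{Lemma-improvement}(2) accumulated over the returns. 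With that substitution, your recursion on $\max\{\xi_k,\sqrt\ve\}$ and the rest of the argument go through as in the paper.
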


When $M$ is compact and $f$ is a $C^{1+\beta}$ diffeomorphism,
this is \cite[Prop. 7.3]{Sarig-JAMS}, and the proof is identical.
Let $\un v=\{\Psi_{x_n}^{p^s_n,p^u_n}\}_{n\in\Z}$ and $\un w=\{\Psi_{y_n}^{q^s_n,q^u_n}\}_{n\in\Z}$.
We sketch the proof for the first estimate:
\begin{enumerate}[$\circ$]
\item If $\pi(\un v)=x$ then $s(x)<\infty$: this follows from the relevance of $\mathfs A$ 
(Thm. \ref{Thm-coarse-graining}(3)).
\item Apply Lemma \ref{Lemma-improvement} along $\un v$ and the orbit of $x$: if
$v_n=v$ for infinitely many $n>0$, then the ratio improves at each of these indices.
The conclusion is that $\tfrac{s(V^s[\{v_k\}_{k\geq n}])}{s(x_n)}=e^{\pm\sqrt{\ve}}$, and
analogously $\tfrac{s(V^s[\{w_k\}_{k\geq n}])}{s(y_n)}=e^{\pm\sqrt{\ve}}$.
\item Since $f^n(x)\in V^s[\{v_k\}_{k\geq n}]\cap V^s[\{w_k\}_{k\geq n}]$, Proposition \ref{Prop-stay-window}(1)(c)
implies that $\tfrac{s(V^s[\{v_k\}_{k\geq n}])}{s(f^n(x))}=e^{\pm\sqrt{\ve}}$
and $\tfrac{s(V^s[\{w_k\}_{k\geq n}])}{s(f^n(x))}=e^{\pm\sqrt{\ve}}$.
\end{enumerate}
Hence $\tfrac{s(x_n)}{s(y_n)}=\tfrac{s(x_n)}{s(V^s[\{v_k\}_{k\geq n}])}\cdot\tfrac{s(V^s[\{v_k\}_{k\geq n}])}{s(f^n(x))}
\cdot\tfrac{s(f^n(x))}{s(V^s[\{w_k\}_{k\geq n}])}\cdot\tfrac{s(V^s[\{w_k\}_{k\geq n}])}{s(y_n)}=e^{\pm4\sqrt{\ve}}$.

\subsection{Control of $Q_\ve(x_n)$}

Remind that $Q_\ve(x):=\max\{q\in I_\ve:q\leq \widetilde Q_\ve(x)\}$ where
$$
\widetilde Q_\ve(x)=\ve^{3/\beta}
\min\left\{\|C_\chi(x)^{-1}\|_{\rm Frob}^{-24/\beta},\|C_\chi(f(x))^{-1}\|^{-12/\beta}_{\rm Frob}\rho(x)^{72a/\beta}\right\},
$$
so we first control $\widetilde Q_\ve(x_n)$.
By parts (2)--(3), $\tfrac{\|C_\chi(x_n)^{-1}\|_{\rm Frob}}{\|C_\chi(y_n)^{-1}\|_{\rm Frob}}=e^{\pm 5\sqrt{\ve}}$.
Using that $\Psi_{f(x_n)}^{p^s_{n+1}\wedge p^u_{n+1}}\overset{\ve}{\approx}\Psi_{x_{n+1}}^{p^s_{n+1}\wedge p^u_{n+1}}$,
Proposition \ref{Lemma-overlap}(1)--(2) implies that
$\tfrac{\|C_\chi(f(x_n))^{-1}\|_{\rm Frob}}{\|C_\chi(x_{n+1})^{-1}\|_{\rm Frob}}=e^{\pm\sqrt{\ve}}$,
and similarly $\tfrac{\|C_\chi(f(y_n))^{-1}\|_{\rm Frob}}{\|C_\chi(y_{n+1})^{-1}\|_{\rm Frob}}=e^{\pm\sqrt{\ve}}$.
Hence
$$\tfrac{\|C_\chi(f(x_n))^{-1}\|_{\rm Frob}}{\|C_\chi(f(y_n))^{-1}\|_{\rm Frob}}=
\tfrac{\|C_\chi(f(x_n))^{-1}\|_{\rm Frob}}{\|C_\chi(x_{n+1})^{-1}\|_{\rm Frob}}\cdot
\tfrac{\|C_\chi(x_{n+1})^{-1}\|_{\rm Frob}}{\|C_\chi(y_{n+1})^{-1}\|_{\rm Frob}}\cdot
\tfrac{\|C_\chi(y_{n+1})^{-1}\|_{\rm Frob}}{\|C_\chi(f(y_n))^{-1}\|_{\rm Frob}}=e^{\pm 7\sqrt{\ve}}.$$

\medskip
We now estimate the ratio $\tfrac{\rho(x_n)}{\rho(y_n)}$. For that we obtain estimates
similar to (\ref{equation-distances}) for $f^{\pm 1}(x_n),f^{\pm 1}(y_n)$. By symmetry,
we only need to get the inequalities for $f(x_n),f(y_n)$. Start by noting that
$d(f(x_n),x_{n+1})\leq (p^s_{n+1}\wedge p^u_{n+1})^8<\ve d(x_{n+1},\mathfs D)$, hence
$d(f(x_n),\mathfs D)=d(x_{n+1},\mathfs D)\pm d(f(x_n),x_{n+1})=(1\pm\ve)d(x_{n+1},\mathfs D)$
and thus $d(f(x_n),x_{n+1})<2\ve d(f(x_n),\mathfs D)$. Similarly $d(f(y_n),y_{n+1})<2\ve d(f(y_n),\mathfs D)$.
Using part (1), $d(x_{n+1},y_{n+1})<\ve[d(x_{n+1},\mathfs D)+d(y_{n+1},\mathfs D)]<
2\ve[d(f(x_n),\mathfs D)+d(f(y_n),\mathfs D)]$, therefore
\begin{align*}
d(f(x_n),f(y_n))&\leq d(f(x_n),x_{n+1})+d(x_{n+1},y_{n+1})+d(y_{n+1},f(y_n))\\
&<4\ve[d(f(x_n),\mathfs D)+d(f(y_n),\mathfs D)].
\end{align*}
This implies that $d(f(x_n),\mathfs D)=d(f(y_n),\mathfs D)\pm 4\ve[d(f(x_n),\mathfs D)+d(f(y_n),\mathfs D)]$
and so
$\tfrac{1-4\ve}{1+4\ve}\leq \tfrac{d(f(x_n),\mathfs D)}{d(f(y_n),\mathfs D)}\leq\tfrac{1+4\ve}{1-4\ve}$.
The same estimate holds for $f^{-1}$. Together with (\ref{equation-distances}), we get that
$\tfrac{1-4\ve}{1+4\ve}\leq\tfrac{\rho(x_n)}{\rho(y_n)}\leq\tfrac{1+4\ve}{1-4\ve}$.
If $\ve>0$ is small enough then
$e^{-\sqrt{\ve}}<\left(\tfrac{1-4\ve}{1+4\ve}\right)^{\frac{72a}{\beta}}
<\left(\tfrac{1+4\ve}{1-4\ve}\right)^{\frac{72a}{\beta}}<e^{\sqrt{\ve}}$, hence
$\tfrac{\rho(x_n)^{72a/\beta}}{\rho(y_n)^{72a/\beta}}=e^{\pm\sqrt{\ve}}$.
The conclusion is that
$\tfrac{\widetilde Q_\ve(x_n)}{\widetilde Q_\ve(y_n)}={\rm exp}[\pm(\tfrac{120}{\beta}\sqrt{\ve})]$,
which implies that $\tfrac{Q_\ve(x_n)}{Q_\ve(y_n)}={\rm exp}[\pm(\frac{2}{3}\ve+\tfrac{120}{\beta}\sqrt{\ve})]$.
Hence if $\ve>0$ is small enough it holds $\tfrac{Q_\ve(x_n)}{Q_\ve(y_n)}=e^{\pm\sqrt[3]{\ve}}$.

\subsection{Control of $p^s_n$ and $p^u_n$}

As in \cite[Prop. 8.3]{Sarig-JAMS}, (GPO2) implies the lemma below.

\begin{lemma}\label{Lemma-maximality}
If $\un v=\{\Psi_{x_n}^{p^s_n,p^u_n}\}_{n\in\Z}\in\Sigma^\#$ then 
$p^s_n=\delta_\ve Q_\ve(x_n)$ for infinitely many $n>0$ and
$p^u_n=\delta_\ve Q_\ve(x_n)$ for infinitely many $n<0$.
\end{lemma}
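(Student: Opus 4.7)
The plan is to argue by contradiction using the greedy recursion (GPO2) together with the recurrence hypothesis $\un v \in \Sigma^\#$. Suppose the first claim fails, i.e. there exists $N \geq 0$ such that $p^s_n < \delta_\ve Q_\ve(x_n)$ for every $n \geq N$. Then by (GPO2), for every such $n$ we must have $p^s_n = e^\ve p^s_{n+1}$, since the minimum in the definition of $p^s_n$ is not attained by $\delta_\ve Q_\ve(x_n)$. Iterating, $p^s_{N+k} = e^{-\ve k} p^s_N$, which tends to $0$ as $k \to \infty$.

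On the other hand, $\un v \in \Sigma^\#$ means there is a vertex $v = \Psi_x^{p^s,p^u} \in \mathfs A$ such that $v_n = v$ for infinitely many $n > 0$. For every such $n$, the stable parameter of $v_n$ equals the fixed positive number $p^s$. This contradicts the conclusion $p^s_n \to 0$ derived above, so the assumption was false, and $p^s_n = \delta_\ve Q_\ve(x_n)$ must occur for infinitely many $n > 0$.

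The statement about $p^u_n$ for infinitely many $n < 0$ is proved symmetrically using the second identity of (GPO2): $p^u_{n+1} = \min\{e^\ve p^u_n, \delta_\ve Q_\ve(x_{n+1})\}$. If $p^u_n < \delta_\ve Q_\ve(x_n)$ failed to hold for all $n \leq -N$, then $p^u_{n+1} = e^\ve p^u_n$ would force $p^u_{-N-k} = e^{-\ve k} p^u_{-N} \to 0$, contradicting the existence of some vertex $w$ with $v_n = w$ for infinitely many $n < 0$.

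I do not expect any real obstacle here: the whole content of the lemma is that the two clauses in the greedy recursion (GPO2) cannot indefinitely favor the ``propagation'' clause $e^\ve p^{s/u}_{\cdot}$ over the ``cap'' clause $\delta_\ve Q_\ve(x_\cdot)$, because recurrence forces the sequence of parameters to take some fixed positive value infinitely often in both directions. The only point to be mildly careful about is not to conflate the two halves of $\Sigma^\#$: the vertex recurring infinitely often for $n > 0$ controls $p^s_n$, while the (possibly different) vertex recurring for $n < 0$ controls $p^u_n$.
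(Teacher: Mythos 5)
Your proof is correct and is essentially the argument the paper invokes (it defers to \cite[Prop.~8.3]{Sarig-JAMS}, where precisely this contradiction — the greedy clause of (GPO2) forcing $p^s_{N+k}=e^{-\ve k}p^s_N\to 0$ against the fixed positive parameter of a vertex recurring in $\Sigma^\#$ — is carried out). Your care in using the forward-recurrent vertex for $p^s$ and the backward-recurrent one for $p^u$ is exactly the right point to watch.
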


We now prove the first half of part (5) (the other half is analogous).
By symmetry, it is enough to prove that $p^s_n\geq e^{-\sqrt[3]{\ve}}q^s_n$ for all $n\in\Z$.
\begin{enumerate}[$\circ$]
\item If $p^s_n=\delta_\ve Q_\ve(x_n)$ then part (4) gives
$p^s_n=\delta_\ve Q_\ve(x_n)\geq e^{-\sqrt[3]{\ve}}\delta_\ve Q_\ve(y_n)
\geq e^{-\sqrt[3]{\ve}}q^s_n$.
\item If $p^s_n\geq e^{-\sqrt[3]{\ve}}q^s_n$ then (GPO2) and part (4) give:
$$p^s_{n-1}=\min\{e^\ve p^s_n,\delta_\ve Q_\ve(x_{n-1})\}\geq
e^{-\sqrt[3]{\ve}}\min\{e^\ve q^s_n,\delta_\ve Q_\ve(y_{n-1})\}=e^{-\sqrt[3]{\ve}}q^s_{n-1}.$$
\end{enumerate}
By Lemma \ref{Lemma-maximality}, it follows that $p^s_n\geq e^{-\sqrt[3]{\ve}}q^s_n$ for all $n\in\Z$.

\subsection{Control of $\Psi_{y_n}^{-1}\circ\Psi_{x_n}$}

For $z_n=x_n,y_n$, the calculations in the
proof of Lemma \ref{Lemma-linear-reduction} give that
$$
\widetilde{C_\chi(z_n)}=R_{z_i}\left[\begin{array}{cc}\tfrac{1}{s(z_n)}& \tfrac{\cos\alpha(z_n)}{u(z_n)}\\
0 & \tfrac{\sin\alpha(z_n)}{u(z_n)}\end{array}\right]
$$
where $R_{z_n}$ is the rotation that takes $e_1$ to $\iota_{z_n}e^s_{z_n}$.

\begin{lemma}\label{Lemma-rotations}
Under the conditions of Theorem \ref{Thm-inverse}, for all $n\in\Z$ it holds
$$
R_{y_n}^{-1}R_{x_n}=(-1)^{\sigma_n}{\rm Id}+
\left[\begin{array}{cc}\ve_{11}&\ve_{12}\\ \ve_{21}&\ve_{22}\end{array}\right]
$$
where $\sigma_n\in\{0,1\}$ and $|\ve_{jk}|<(p^s_n\wedge p^u_n)^{\beta/5}+(q^s_n\wedge q^u_n)^{\beta/5}<\sqrt{\ve}$.
\end{lemma}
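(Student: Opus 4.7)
The plan is to extract a unit tangent direction common to the two stable admissible manifolds at the shared point $q := f^n(\pi(\un v)) = f^n(\pi(\un w))$, and use it to compare $e^s_{x_n}$ with $e^s_{y_n}$ in $\R^2$ via $\iota$ (after parallel transport to a common basepoint). Set $V_1 := V^s[\{v_k\}_{k \ge n}]$ and $V_2 := V^s[\{w_k\}_{k \ge n}]$; both contain $q$ and stay in windows, so by Proposition \ref{Prop-stay-window}(2) they are comparable, and in particular share a common tangent line $L \subset T_q M$.

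Writing $q = \Psi_{x_n}(t_1, F(t_1)) = \Psi_{y_n}(t_2, G(t_2))$ with $F, G$ the representing functions of $V_1, V_2$, the shadowing property forces $\|(t_1, F(t_1))\| \le \sqrt{2}(p^s_n \wedge p^u_n)$, and $L$ is spanned by $\xi_x := d(\Psi_{x_n})_{(t_1, F(t_1))}(1, F'(t_1))$ and equally by $\xi_y := d(\Psi_{y_n})_{(t_2, G(t_2))}(1, G'(t_2))$. By Lemma \ref{Lemma-Pesin-chart}(2), I replace $d(\Psi_{x_n})_{(t_1, F(t_1))}$ by $d(\Psi_{x_n})_0 = C_\chi(x_n)$ in $\xi_x$ up to an additive error of size $\rho(x_n)^{-a}(p^s_n \wedge p^u_n) \le \ve^{1/24}(p^s_n \wedge p^u_n)^{\beta/5}$, using the $Q_\ve$-bound $\rho(x_n)^{-a} Q_\ve(x_n)^{\beta/72} < \ve^{1/24}$. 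The leading linearized term factors as
\[
\widetilde{C_\chi(x_n)}(1, F'(t_1)) = R_{x_n}\Bigl(\tfrac{1}{s(x_n)}\, e_1 + \tfrac{F'(t_1)}{u(x_n)}\bigl(\cos\alpha(x_n), \sin\alpha(x_n)\bigr)\Bigr),
\]
whose dominant part is $s(x_n)^{-1} R_{x_n} e_1$. After normalization to unit length, the perturbing $(\cos\alpha, \sin\alpha)$-contribution has relative size at most $|F'(t_1)|\, s(x_n)/u(x_n) \le (p^s_n \wedge p^u_n)^{\beta/3}\, \|C_\chi(x_n)^{-1}\|$; by $\|C_\chi(x_n)^{-1}\|(p^s_n \wedge p^u_n)^{\beta/24} \le \ve^{1/8}$ this is at most $\ve^{1/8}(p^s_n \wedge p^u_n)^{7\beta/24} < (p^s_n \wedge p^u_n)^{\beta/5}$. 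Thus $\xi_x/\|\xi_x\| = R_{x_n} e_1 + \zeta_x$ with $\|\zeta_x\| < (p^s_n \wedge p^u_n)^{\beta/5}$, and symmetrically $\xi_y/\|\xi_y\| = R_{y_n} e_1 + \zeta_y$ with $\|\zeta_y\| < (q^s_n \wedge q^u_n)^{\beta/5}$.

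Finally, since both unit vectors span $L$, they agree up to sign: $\xi_x/\|\xi_x\| = (-1)^{\sigma_n}\, \xi_y/\|\xi_y\|$ for some $\sigma_n \in \{0, 1\}$. Subtracting, $R_{x_n} e_1 = (-1)^{\sigma_n} R_{y_n} e_1 + \eta$ with $\|\eta\| < (p^s_n \wedge p^u_n)^{\beta/5} + (q^s_n \wedge q^u_n)^{\beta/5}$, hence $R_{y_n}^{-1} R_{x_n}\, e_1 = (-1)^{\sigma_n} e_1 + R_{y_n}^{-1}\eta$. Since $R_{y_n}^{-1} R_{x_n}$ is a rotation in $\R^2$ and a rotation is determined by its image of $e_1$, an elementary calculation shows that if $R e_1 = (-1)^{\sigma_n} e_1 + v$ with $\|v\| \le \delta$, then $R = (-1)^{\sigma_n} {\rm Id} + E$ with every entry of $E$ bounded by $\delta$; this yields the claimed decomposition. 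The main obstacle is the bookkeeping needed to absorb $\|C_\chi(x_n)^{-1}\|$ --- which can be large in the non-uniformly hyperbolic regime --- into the admissibility power $(p^s_n \wedge p^u_n)^{\beta/3}$ via the $Q_\ve$-weighted bounds, yielding the correct H\"older exponent $\beta/5$.
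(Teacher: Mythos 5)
Your proposal is correct and follows essentially the same route as the paper's proof (adapted from Sarig, Prop.~6.7): express the common tangent direction of the two stable manifolds at the shared point in the two Pesin charts, linearize the exponential maps, and absorb the $\rho^{-a}$ and $\|C_\chi^{-1}\|$ factors through the definition of $Q_\ve$ to conclude that $R_{y_n}^{-1}R_{x_n}$ moves $e_1$ by at most the stated amount. Two small caveats: the existence of a common tangent line at $q$ is not literally an instance of Proposition~\ref{Prop-stay-window}(2), whose two manifolds are admissible in charts centered at the \emph{same} point, but rather follows from the contraction/expansion argument already invoked for part (2) of Theorem~\ref{Thm-inverse}; and the exp-linearization error, which you bound additively by $\ve^{1/24}(p^s_n\wedge p^u_n)^{\beta/5}$, must still be divided by $\|\xi_x\|\gtrsim 1/s(x_n)$ before entering $\zeta_x$, costing an extra factor $s(x_n)\leq\|C_\chi(x_n)^{-1}\|$ that your $Q_\ve$-weighted bounds do absorb (this is exactly what the paper's Claim~2 handles).
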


When $M$ is compact and $f$ is a $C^{1+\beta}$ diffeomorphism,
this is \cite[Prop. 6.7]{Sarig-JAMS}. See Apendix B for the proof in our context.

\medskip
Now we establish part (6). It is enough to prove the case $n=0$.
Write $\Psi_{x_0}^{p^s_0,p^u_0}=\Psi_{x}^{p^s,p^u}$,
$\Psi_{y_0}^{q^s_0,q^u_0}=\Psi_{y}^{q^s,q^u}$, $p=p^s\wedge p^u$, $q=q^s\wedge q^u$,
$\sigma=\sigma_0$.
Write $\widetilde{C_\chi(x)}=R_xC_x$, $\widetilde{C_\chi(y)}=R_yC_y$.
As in \cite[\S9]{Sarig-JAMS}, Lemma \ref{Lemma-rotations} gives
$\|C_y^{-1}C_x-(-1)^\sigma{\rm Id}\|<14\sqrt{\ve}$ and hence for small $\ve>0$:
\begin{align*}
&\|\widetilde{C_\chi(x)}-\widetilde{C_\chi(y)}\|\leq \|R_xC_x-(-1)^\sigma R_xC_y\|+\|R_xC_y-(-1)^\sigma R_yC_y\|\\
&\leq \|C_y^{-1}\|\|C_y^{-1}C_x-(-1)^\sigma{\rm Id}\|+\|R_y^{-1}R_x-(-1)^\sigma{\rm Id}\|<
16\sqrt{\ve}\|C_y^{-1}\|<\|C_y^{-1}\|.
\end{align*}
We use this to show that $\Psi_y^{-1}\circ\Psi_x$ is well-defined in $R[10Q_\ve(x)]$.
The argument is very similar to the proof of Proposition \ref{Lemma-overlap}(3).
For $v\in R[10Q_\ve(x)]$, (A2) and part (4) imply that for small $\ve>0$:
\begin{align*}
&d(\Psi_x(v),\Psi_y(v))\leq 2\Sas(C_\chi(x)v,C_\chi(y)v)\leq 4(d(x,y)+\|\widetilde{C_\chi(x)}-\widetilde{C_\chi(y)}\|\|v\|)\\
&< 4(q+\|C_y^{-1}\|\|v\|)<100\|C_y^{-1}\|Q_\ve(y).
\end{align*}
hence $\Psi_x(v)\in B(\Psi_y(v),100\|C_y^{-1}\|Q_\ve(y))\subset \Psi_y[B]$ where
$B\subset\R^2$ is the ball with center $v$ and radius $200\|C_y^{-1}\|^2Q_\ve(y)$.
If $\ve>0$ is small then for $w\in B$ we have
\begin{align*}
&\|w\|\leq \|v\|+200\|C_y^{-1}\|^2Q_\ve(y)<20Q_\ve(y)+200\ve^{1/4}Q_\ve(y)^{1-\beta/12}\\
&<20\ve^{3/\beta}d(y,\mathfs D)^a+200\ve^{1/4}d(y,\mathfs D)^a<d(y,\mathfs D)^a<2\mathfrak r(y),
\end{align*}
therefore $\Psi_y^{-1}\circ\Psi_x$ is well-defined in $R[10Q_\ve(x)]$.

\medskip
It remains to estimate $\Psi_y^{-1}\circ\Psi_x-(-1)^\sigma{\rm Id}$. Write
$\Psi_y^{-1}\circ\Psi_x=(-1)^\sigma{\rm Id}+\delta+\Delta$, where $\delta\in\R^2$ is a constant vector
and $\Delta:R[10Q_\ve(x)]\to\R^2$. Let $v\in R[10Q_\ve(x)]$.
Proceeding as in \cite[pp. 382]{Sarig-JAMS} and applying (A4) we get for small $\ve>0$ that:
\begin{align*}
&\|d(\Delta)_v\|\leq 2\|C_y^{-1}\|d(y,\mathfs D)^{-a}d(x,y)+14\sqrt{\ve}
<2\|C_y^{-1}\|d(y,\mathfs D)^{-a}Q_\ve(y)+14\sqrt{\ve}\\
&<2\sqrt{\ve}\|C_y^{-1}\|Q_\ve(y)^{\beta/24}d(y,\mathfs D)^{-a}Q_\ve(y)^{\beta/72}+14\sqrt{\ve}<16\sqrt{\ve}<\sqrt[3]{\ve}.
\end{align*}
The estimate of $\|\delta\|$ is identical to \cite[pp. 383]{Sarig-JAMS}. This completes the proof
of part (6), and hence of Theorem \ref{Thm-inverse}.

\section{Symbolic dynamics}

\subsection{A countable Markov partition}

Let $(\Sigma,\sigma)$ be the TMS
constructed in Theorem \ref{Thm-coarse-graining}, and let
$\pi:\Sigma\to M$ as defined in the end of section \ref{Section-coarse-graining}.
In the sequel we use Theorem \ref{Thm-inverse} to construct a cover of ${\rm NUH}_\chi^\#$
that is locally finite and satisfies a (symbolic) Markov property.

\medskip
\noindent
{\sc The Markov cover $\mathfs Z$:} Let $\mathfs Z:=\{Z(v):v\in\mathfs A\}$, where
$$
Z(v):=\{\pi(\un v):\un v\in\Sigma^\#\text{ and }v_0=v\}.
$$

\medskip
In other words, $\mathfs Z$ is the family defined by the natural partition of $\Sigma^\#$ into
cylinder at the zeroth position. Admissible manifolds allow us to
define {\em invariant fibres} inside each $Z\in\mathfs Z$. Let $Z=Z(v)$.

\medskip
\noindent
{\sc $s$/$u$--fibres in $\mathfs Z$:} Given $x\in Z$, let $W^s(x,Z):=V^s[\{v_n\}_{n\geq 0}]\cap Z$
be the {\em $s$--fibre} of $x$ in $Z$ for some (any) $\un v=\{v_n\}_{n\in\Z}\in\Sigma^\#$
s.t. $\pi(\un v)=x$ and $v_0=v$. Similarly, let $W^u(x,Z):=V^u[\{v_n\}_{n\leq 0}]\cap Z$ be
the {\em $u$--fibre} of $x$ in $Z$.

\medskip
By Proposition \ref{Prop-stay-window}(2), the definitions above do not depend on the choice of $\un v$, 
and any two $s$--fibres ($u$--fibres) either coincide or are disjoint. We also
define $V^s(x,Z):=V^s[\{v_n\}_{n\geq 0}]$ and $V^u(x,Z):=V^u[\{v_n\}_{n\leq 0}]$.
Below we collect the main properties of $\mathfs Z$.

\begin{proposition}\label{Prop-Z}
The following are true.
\begin{enumerate}[{\rm (1)}]
\item {\sc Covering property:} $\mathfs Z$ is a cover of ${\rm NUH}_\chi^\#$.
\item {\sc Local finiteness:} For every $Z\in\mathfs Z$, $\#\{Z'\in\mathfs Z:Z\cap Z'\neq\emptyset\}<\infty$.
\item {\sc Product structure:} For every $Z\in\mathfs Z$ and every $x,y\in Z$, the intersection
$W^s(x,Z)\cap W^u(y,Z)$ consists of a single point of $Z$.
\item {\sc Symbolic Markov property:} If $x=\pi(\un v)$ with $\un v\in\Sigma^\#$, then
$$
f(W^s(x,Z(v_0)))\subset W^s(f(x),Z(v_1))\, \text{ and }\, f^{-1}(W^u(f(x),Z(v_1)))\subset W^u(x,Z(v_0)).
$$
\end{enumerate}
\end{proposition}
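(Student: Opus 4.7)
The plan is to verify the four assertions in turn, each resting on results already established earlier in the paper. For Property (1), I would take $x \in {\rm NUH}_\chi^\#$ and apply the sufficiency part of Theorem \ref{Thm-coarse-graining} to produce an $\ve$--gpo $\un{v} = \{\Psi_{x_n}^{p^s_n,p^u_n}\} \in \mathfs A^\Z$ shadowing $x$; Lemma \ref{Lemma-shadowing} then identifies $\pi(\un{v}) = x$, and once $\un v \in \Sigma^\#$ is verified, this places $x \in Z(v_0)$ by definition of $\mathfs Z$. The construction in the sufficiency proof yields $p^s_n = e^{\pm\ve/3}q^s_\ve(f^n(x))$ and $p^u_n = e^{\pm\ve/3}q^u_\ve(f^n(x))$; combined with the recurrence conditions defining ${\rm NUH}_\chi^\#$, Lemma \ref{Lemma-q^s}, and the discreteness of $\mathfs A$ (Theorem \ref{Thm-coarse-graining}(1)), the pigeonhole principle forces some vertex in $\mathfs A$ to repeat infinitely often in positive times and another in negative times.

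For Property (2), suppose $Z(v) \cap Z(v') \neq \emptyset$ and pick $x$ in the intersection with $x = \pi(\un{v}) = \pi(\un{v'})$ for $\un v, \un{v'} \in \Sigma^\#$ satisfying $v_0 = v = \Psi_x^{p^s,p^u}$ and $v'_0 = v' = \Psi_{x'}^{(p^s)',(p^u)'}$. The Inverse Theorem \ref{Thm-inverse}, parts (1), (4), (5), pins down $v'$ up to bounded combinatorics: $(p^s)', (p^u)' \geq e^{-\sqrt[3]{\ve}}(p^s \wedge p^u)$ and $d(x,x')$ is small. The discreteness property applied with threshold $t = e^{-\sqrt[3]{\ve}}(p^s \wedge p^u) > 0$ then leaves only finitely many possibilities for $v'$.

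Properties (3) and (4) both follow from a splicing construction. For (3), write $x = \pi(\un v)$ and $y = \pi(\un w)$ with $v_0 = w_0 = v$, and form $\un u := (\ldots,w_{-2},w_{-1},v_0,v_1,v_2,\ldots)$; this is an $\ve$--gpo because the central edge $w_{-1}\to v_0$ coincides with the edge $w_{-1}\to w_0$ of $\un w$ (since $w_0 = v_0$), and it lies in $\Sigma^\#$ as its positive and negative halves agree with those of $\un v$ and $\un w$. Then $z := \pi(\un u)$ belongs to $V^s[\{v_n\}_{n \geq 0}] \cap V^u[\{w_n\}_{n \leq 0}] = V^s(x,Z) \cap V^u(y,Z)$, and $u_0 = v$ gives $z \in Z$; Lemma \ref{Lemma-admissible-manifolds}(1) makes this intersection a singleton, hence $z$ is the unique point in $W^s(x,Z) \cap W^u(y,Z)$. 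For (4), given $y \in W^s(x,Z(v_0))$, write $y = \pi(\un u)$ with $u_0 = v_0$ and $y \in V^s[\{v_n\}_{n \geq 0}]$. Splicing $\un{u'} := (\ldots, u_{-1},u_0,v_1,v_2,\ldots)$ and invoking the singleton intersection identifies $\pi(\un{u'}) = y$; shifting, $f(y) = \pi(\sigma \un{u'}) \in Z(v_1)$, while Proposition \ref{Prop-stable-manifolds}(2) places $f(y) \in V^s[\{v_n\}_{n \geq 1}] = V^s(f(x),Z(v_1))$, whence $f(y) \in W^s(f(x),Z(v_1))$. The unstable case is dual, using $f^{-1}$ and Proposition \ref{Prop-stable-manifolds}(2) in its $u$-form.

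The principal obstacle is the verification in (1) that the canonical shadowing sequence lies in $\Sigma^\#$. Membership in $\Sigma^\#$ requires a repeated vertex in each time direction, which by discreteness encodes both $p^s_n$ and $p^u_n$ being bounded away from zero along a common subsequence of recurrent indices; yet ${\rm NUH}_\chi^\#$ only supplies the recurrence of $q^s_\ve$ in positive time and of $q^u_\ve$ in negative time separately. Extracting simultaneous lower bounds at recurrent indices requires exploiting the greedy structure of (GPO2) together with the recursive identities in Lemma \ref{Lemma-q^s}, so that control of one side feeds into control of the other.
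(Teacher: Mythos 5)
Your treatment of parts (2), (3) and (4) is correct and follows exactly the route the paper intends: local finiteness from Theorem \ref{Thm-inverse}(5) (via (1) and (4)) plus discreteness, the product structure from the splicing of $\un v$ and $\un w$ at the common symbol together with Lemma \ref{Lemma-admissible-manifolds}(1), and the symbolic Markov property by the splice-and-shift argument of Sarig's Prop.~10.9. These are fine as written.

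The one genuine gap is the point you yourself flag in part (1): membership of the shadowing chain in $\Sigma^\#$. The sketched resolution --- that ``control of one side feeds into control of the other'' via (GPO2) and Lemma \ref{Lemma-q^s} --- does not work. To get a repeated symbol in positive time you need infinitely many $n>0$ at which \emph{both} $p^s_n$ and $p^u_n$ exceed some fixed $t>0$, i.e.\ at which $p^s_n\wedge p^u_n=e^{\pm\ve/3}q_\ve(f^n(x))$ is bounded below. The hypothesis $\limsup_{n\to\infty}q_\ve^s(f^n(x))>0$ gives times $n_k$ with $q^s_\ve(f^{n_k}(x))\geq t_0$, hence only the a priori bound $Q_\ve(f^{n_k+j}(x))\geq e^{-\ve j}t_0/\delta_\ve$ for $j\geq 0$; this is perfectly consistent with $Q_\ve(f^{n_{k+1}-1}(x))$ being as small as $e^{-\ve(n_{k+1}-1-n_k)}t_0/\delta_\ve$, which by the backward recursion $q^u_\ve(f^n(x))=\min\{e^\ve q^u_\ve(f^{n-1}(x)),\delta_\ve Q_\ve(f^n(x))\}$ forces $q^u_\ve(f^{n_{k+1}}(x))\to 0$ when the gaps $n_{k+1}-n_k$ grow. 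In other words, recurrence of $q^s_\ve$ in forward time and of $q^u_\ve$ in backward time does not by itself yield recurrence of $q_\ve=q^s_\ve\wedge q^u_\ve$ in either direction, and since (GPO2) forces $p^u_n\leq e^{\ve(n-m)}\delta_\ve Q_\ve(x_m)$ for all $m\leq n$ (so $p^u_n\lesssim q^u_\ve(f^n(x))$ for \emph{any} admissible chain, not just the canonical one), no re-coding can circumvent this.

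The way this step is actually closed (and the reading under which the paper's one-line citation of Theorem \ref{Thm-coarse-graining}(2) is complete) is to use that $x$ satisfies $\limsup_{n\to\pm\infty}q_\ve(f^n(x))>0$ --- the two-sided quantity, as in Sarig's definition of ${\rm NUH}_\chi^\#$. Then $p^s_n\wedge p^u_n=e^{\pm\ve/3}q_\ve(f^n(x))\geq e^{-\ve/3}t_0$ along subsequences $n\to+\infty$ and $n\to-\infty$, discreteness makes the set of possible symbols at those times finite, and the pigeonhole argument you describe produces the two recurrent vertices. You should either verify that the paper's one-sided $\limsup$ conditions imply the two-sided one for the points you care about, or run the argument under the two-sided hypothesis; as it stands, the step you isolate is not merely unfinished but cannot be finished by the mechanism you propose.
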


Part (1) follows from Theorem \ref{Thm-coarse-graining}(2),
part (2) follows from Theorem \ref{Thm-inverse}(5), part (3) follows from
Lemma \ref{Lemma-admissible-manifolds}(1), and part (4) is proved as in \cite[Prop. 10.9]{Sarig-JAMS}.
For $x,y\in Z$, let $[x,y]_Z:=$ intersection point of $W^s(x,Z)$ and $W^u(y,Z)$, and
call it the {\em Smale bracket} of $x,y$ in $Z$.

\begin{lemma}
The following holds for all $\ve>0$ small enough.
\begin{enumerate}[{\rm (1)}]
\item {\sc Compatibility:} If $x,y\in Z(v_0)$ and $f(x),f(y)\in Z(v_1)$ with
$v_0\overset{\ve}{\to} v_1$ then $f([x,y]_{Z(v_0)})=[f(x),f(y)]_{Z(v_1)}$.
\item {\sc Overlapping charts properties:} If $Z=Z(\Psi_x^{p^s,p^u}),Z'=Z(\Psi_y^{q^s,q^u})\in\mathfs Z$
with $Z\cap Z'\neq \emptyset$ then:
\begin{enumerate}[{\rm (a)}]
\item $Z\subset \Psi_y(R[q^s\wedge q^u])$.
\item If $x\in Z\cap Z'$ then $W^{s/u}(x,Z)\subset V^{s/u}(x,Z')$. 
\item If $x\in Z,y\in Z'$ then $V^s(x,Z)$ and $V^u(y,Z')$ intersect at a unique point. 
\end{enumerate}
\end{enumerate}
\end{lemma}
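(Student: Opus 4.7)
The plan is to prove (1) by splicing $\ve$-gpo's and invoking the symbolic Markov property (Proposition \ref{Prop-Z}(4)), and (2) by applying the Inverse Theorem (Theorem \ref{Thm-inverse}) at a common point of $Z\cap Z'$.

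For \textbf{part (1)}, set $p:=[x,y]_{Z(v_0)}$ and $q:=[f(x),f(y)]_{Z(v_1)}$; the goal is $f(p)=q$. The crucial first step is to build an $\ve$-gpo $\un\xi\in\Sigma^\#$ shadowing $p$ with $\xi_0=v_0$ and $\xi_1=v_1$. Starting from $\un u\in\Sigma^\#$ for $x$, $\un w\in\Sigma^\#$ for $y$ with $u_0=w_0=v_0$, and $\un u'\in\Sigma^\#$ for $f(x)$ with $u'_0=v_1$, I put $\xi_n:=w_n$ for $n\le 0$ and $\xi_n:=u'_{n-1}$ for $n\ge 1$. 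All transitions are edges (the $\xi_0\to\xi_1$ one is the assumed $v_0\overset{\ve}{\to}v_1$), and recurrence is inherited. To see $\pi(\un\xi)=p$, observe $V^u[\un\xi|_{n\le 0}]=V^u[\un w|_{n\le 0}]$, while $V^s[\un\xi|_{n\ge 0}]$ and $V^s[\un u|_{n\ge 0}]$ are both $s$-admissible at the common chart $v_0$ through $x$; Proposition \ref{Prop-stay-window}(2) forces nesting, and Lemma \ref{Lemma-admissible-manifolds}(1) makes both intersections with $V^u[\un w|_{n\le 0}]$ reduce to a single common point. Analogous splicings supply gpo's $\un\eta$ for $x$ and $\un\zeta$ for $y$ with $0$-th letter $v_0$ and $1$st letter $v_1$. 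Then $\sigma\un\xi$ is a gpo for $f(p)$ with $0$-th letter $v_1$, whence $f(p)\in Z(v_1)$; Proposition \ref{Prop-Z}(4) applied to $\un\eta$ delivers $f(p)\in W^s(f(x),Z(v_1))$, and the fact that $\sigma\un\xi$ and $\sigma\un\zeta$ share the same past gives $f(p)\in V^u[\sigma\un\zeta|_{n\le 0}]\cap Z(v_1)=W^u(f(y),Z(v_1))$. Product structure (Proposition \ref{Prop-Z}(3)) then identifies $f(p)$ with $q$.

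For \textbf{part (2)}, I would fix $z_0\in Z\cap Z'$ with gpo's through $v:=\Psi_x^{p^s,p^u}$ and $v':=\Psi_y^{q^s,q^u}$ and invoke Theorem \ref{Thm-inverse}: item (6) gives $\Psi_y^{-1}\circ\Psi_x=(-1)^\sigma\mathrm{Id}+\delta+\Delta$ on $R[10Q_\ve(x)]$ with $\|\delta\|<10^{-1}(q^s\wedge q^u)$ and $\|d\Delta\|_0<\sqrt[3]{\ve}$, while item (5) gives $p^s\wedge p^u=e^{\pm\sqrt[3]\ve}(q^s\wedge q^u)$. For (a), any $z\in Z$ writes as $z=\Psi_x(w)$ with $\|w\|_\infty<10^{-2}(p^s\wedge p^u)$ by Lemma \ref{Lemma-admissible-manifolds}(1), and substitution gives $\|\Psi_y^{-1}(z)\|_\infty<q^s\wedge q^u$. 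For (b), the image of $V^s(x,Z)$ under $\Psi_y^{-1}\circ\Psi_x$ is a curve whose representing function inherits the admissibility (AM1)--(AM3) from that of $V^s(x,Z)$ perturbed by $\delta,\Delta$, so it is $s$-admissible at $v'$ through $x$ and still stays in windows as a subset of $M$; a nesting argument in the spirit of Proposition \ref{Prop-stay-window}(2) then forces it to be nested with $V^s(x,Z')$, and part (a) confines $W^s(x,Z)$ to the common domain, giving $W^s(x,Z)\subset V^s(x,Z')$. The unstable statement is symmetric. For (c), the same transport presents $V^s(x,Z)$ and $V^u(y,Z')$ as $s$- and $u$-admissible curves at the common chart $v'$, and Lemma \ref{Lemma-admissible-manifolds}(1) delivers their unique intersection.

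\textbf{The main obstacle} is the shadowing identification $\pi(\un\xi)=p$ in (1): the spliced gpo $\un\xi$ takes its future from a gpo of $f(x)$ rather than $x$, so $V^s[\un\xi|_{n\ge 0}]$ is a priori distinct from $V^s[\un u|_{n\ge 0}]$. The reconciliation is provided by nesting (Proposition \ref{Prop-stay-window}(2)), whose applicability hinges on both manifolds being referenced to the \emph{same} Pesin chart $v_0$; this is why the recipe demands $\xi_0=v_0$. Once this identification is in hand, the rest of (1) and all of (2) reduce to clean applications of the Inverse Theorem, the symbolic Markov property, and the intersection lemmas already at our disposal.
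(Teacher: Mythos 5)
Your part (1) is correct and is essentially the intended argument (Sarig's Lemma 10.7): splice the past of a chain for $y$ with the shifted future of a chain for $f(x)$ across the given edge $v_0\overset{\ve}{\to}v_1$, identify the spliced stable manifold with $V^s[\un u|_{n\ge 0}]$ via Proposition \ref{Prop-stay-window}(2) (nesting of two admissible manifolds over the \emph{same} interval $[-p^s_0,p^s_0]$ forces equality), and finish with Proposition \ref{Prop-Z}(3)--(4). Part (2)(a) is also correct and matches the paper's route: Lemma \ref{Lemma-admissible-manifolds}(1) puts $\Psi_x^{-1}(z)$ in $R[10^{-2}(p^s\wedge p^u)]\subset R[10Q_\ve(x)]$, where Theorem \ref{Thm-inverse}(5)--(6) applies.

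The gap is in (2)(b)--(c): the transported curve $\Psi_y^{-1}\circ\Psi_x(V^s(x,Z))$ does \emph{not} inherit (AM1)--(AM3), so neither Proposition \ref{Prop-stay-window}(2) nor Lemma \ref{Lemma-admissible-manifolds}(1) can be quoted for it. Theorem \ref{Thm-inverse}(6) only bounds the translation by $\|\delta\|<10^{-1}(q^s\wedge q^u)$ and the derivative error by $\|d\Delta\|_0<\sqrt[3]{\ve}$, whereas (AM1) requires $|F(0)|\le 10^{-3}(q^s\wedge q^u)$ and (AM2) requires $|F'(0)|\le\tfrac{1}{2}(q^s\wedge q^u)^{\beta/3}$, which is $\le\tfrac12(\delta_\ve Q_\ve(y))^{\beta/3}\ll\sqrt[3]{\ve}$. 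After the coordinate change the representing function generically has $|H(0)|\sim 10^{-2}(q^s\wedge q^u)$ and $|H'(0)|\sim\sqrt[3]{\ve}$, so it fails both conditions by a wide margin, and "stays in windows" (which is anchored to a specific double chart and a gpo starting there) is not available at $v'$ either. For (b) the repair is to avoid transport of the manifold altogether: by (a), $W^s(x,Z)\subset\Psi_{y_0}(R[q^s\wedge q^u])\subset\Psi_{y_0}(R[q^s])$, and Theorem \ref{Thm-inverse}(4)--(6) applied at \emph{every} index $n$ to the two chains through the common point gives $f^n(W^s(x,Z))\subset\Psi_{x_n}(R[p^s_n])\subset\Psi_{y_n}(R[10Q_\ve(y_n)])$; the shadowing characterisation of Proposition \ref{Prop-stable-manifolds}(3) then yields $W^s(x,Z)\subset V^s[\un w|_{n\ge 0}]=V^s(x,Z')$. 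For (c) one must rerun the contraction/fixed-point argument behind Lemma \ref{Lemma-admissible-manifolds}(1) with the weakened constants $10^{-1}(q^s\wedge q^u)$ and $\sqrt[3]{\ve}$; it still closes because these are small compared with $q^s\wedge q^u$ and with $1$, but this has to be verified rather than cited.
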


When $M$ is compact and $f$ is a diffeomorphism, part (1) is \cite[Lemma 10.7]{Sarig-JAMS}
and part (2) is \cite[Lemmas 10.8 and 10.10]{Sarig-JAMS}. The same proofs work in our case,
since all calculations are made in the rectangle $R[10Q_\ve(x)]$, and in this domain
we have Theorem \ref{Thm-inverse}(6). 

\medskip
Now we apply a refinement method to destroy non-trivial intersections in $\mathfs Z$. 
The result is a partition of ${\rm NUH}_\chi^\#$ with the (geometrical) Markov property.
This idea, originally developed by Sina{\u\i} and Bowen
for finite covers \cite{Sinai-Construction-of-MP,Sinai-MP-U-diffeomorphisms,Bowen-LNM},
works equally well for countable covers with the local finiteness property \cite{Sarig-JAMS}.
Write $\mathfs Z=\{Z_1,Z_2,\ldots\}$.

\medskip
\noindent
{\sc The Markov partition $\mathfs R$:} For every $Z_i,Z_j\in\mathfs Z$, define a partition of $Z_i$ by:
\begin{align*}
T_{ij}^{su}&=\{x\in Z_i: W^s(x,Z_i)\cap Z_j\neq\emptyset,
W^u(x,Z_i)\cap Z_j\neq\emptyset\}\\
T_{ij}^{s\emptyset}&=\{x\in Z_i: W^s(x,Z_i)\cap Z_j\neq\emptyset,
W^u(x,Z_i)\cap Z_j=\emptyset\}\\
T_{ij}^{\emptyset u}&=\{x\in Z_i: W^s(x,Z_i)\cap Z_j=\emptyset,
W^u(x,Z_i)\cap Z_j\neq\emptyset\}\\
T_{ij}^{\emptyset\emptyset}&=\{x\in Z_i: W^s(x,Z_i)\cap Z_j=\emptyset,
W^u(x,Z_i)\cap Z_j=\emptyset\}.
\end{align*}
Let $\mathfs T:=\{T_{ij}^{\alpha\beta}:i,j\geq 1,\alpha\in\{s,\emptyset\},\beta\in\{u,\emptyset\}\}$,
and let $\mathfs R$ be the partition generated by $\mathfs T$.


\medskip
Since $T_{ii}^{su}=Z_i$, $\mathfs R$ is a partition of ${\rm NUH}_\chi^\#$.
Clearly, $\mathfs R$ is a refinement of $\mathfs Z$. Theorem \ref{Thm-inverse}
implies two local finiteness properties for $\mathfs R$:
\begin{enumerate}[$\circ$]
\item For every $Z\in\mathfs Z$, $\#\{R\in\mathfs R:R\subset Z\}<\infty$.
\item For every $R\in\mathfs R$, $\#\{Z\in\mathfs Z:Z\supset R\}<\infty$.
\end{enumerate}

\medskip
Now we show that $\mathfs R$ is a Markov partition in the sense of Sina{\u\i} \cite{Sinai-MP-U-diffeomorphisms}. 

\medskip
\noindent
{\sc $s$/$u$--fibres in $\mathfs R$:} Given $x\in R\in\mathfs R$, we define the {\em $s$--fibre}
and {\em $u$--fibre} of $x$ by:
\begin{align*}
W^s(x,R):=
\bigcap_{T_{ij}^{\alpha\beta}\in\mathfs T\atop{T_{ij}^{\alpha\beta}\supset R}} W^s(x,Z_i)\cap T_{ij}^{\alpha\beta}
\, \text{ and }\, W^u(x,R):=
\bigcap_{T_{ij}^{\alpha\beta}\in\mathfs T\atop{T_{ij}^{\alpha\beta}\supset R}}  W^u(x,Z_i)\cap T_{ij}^{\alpha\beta}.
\end{align*}

Any two $s$--fibres ($u$--fibres) either coincide or are disjoint.

\begin{proposition}\label{Prop-R}
The following are true.
\begin{enumerate}[{\rm (1)}]
\item {\sc Product structure:} For every $R\in\mathfs R$ and every $x,y\in R$, the intersection
$W^s(x,R)\cap W^u(y,R)$ consists of a single point of $R$. Denote it by $[x,y]$.
\item {\sc Hyperbolicity:} If $z,w\in W^s(x,R)$ then $d(f^n(z),f^n(w))\xrightarrow[n\to\infty]{}0$, and
if $z,w\in W^u(x,R)$ then $d(f^n(z),f^n(w))\xrightarrow[n\to-\infty]{}0$. The rates are exponential.
\item {\sc Geometrical Markov property:} Let $R_0,R_1\in\mathfs R$. If $x\in R_0$ and $f(x)\in R_1$ then 
$$
f(W^s(x,R_0))\subset W^s(f(x),R_1)\, \text{ and }\, f^{-1}(W^u(f(x),R_1))\subset W^u(x,R_0).
$$
\end{enumerate}
\end{proposition}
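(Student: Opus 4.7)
The plan is to run the Bowen--Sina{\u\i} refinement argument (as in \cite{Sarig-JAMS}), exploiting two features of the construction: that $\mathfs R$ is strictly finer than $\mathfs Z$ (each $R\in\mathfs R$ lies in a single $Z_i$, since $T_{ii}^{su}=Z_i$), and that fibres in $\mathfs R$ are by definition obtained by intersecting fibres in $\mathfs Z$ with the finitely many $T_{ij}^{\alpha\beta}$ containing $R$. In particular every $W^{s/u}(x,R)$ is contained in $W^{s/u}(x,Z_i)$.

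For part (1), given $x,y\in R\subset Z_i$ form $z:=[x,y]_{Z_i}=W^s(x,Z_i)\cap W^u(y,Z_i)$; this is a single point by Proposition \ref{Prop-Z}(3). The key observation is that $s$- and $u$-fibres in $\mathfs Z$ are equivalence classes (any two coincide or are disjoint), so $W^s(z,Z_i)=W^s(x,Z_i)$ and $W^u(z,Z_i)=W^u(y,Z_i)$. Consequently, for each $T_{ij}^{\alpha\beta}\supset R$, the conditions $W^s(z,Z_i)\cap Z_j\neq\emptyset$ (or $=\emptyset$) and $W^u(z,Z_i)\cap Z_j\neq\emptyset$ (or $=\emptyset$) are inherited verbatim from $x$ and $y$ respectively, so $z\in T_{ij}^{\alpha\beta}$. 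This gives $z\in R$, hence $z\in W^s(x,R)\cap W^u(y,R)$, and uniqueness is inherited from uniqueness in $Z_i$.

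For part (2), since $W^s(x,R)\subset W^s(x,Z_i)\subset V^s[\{v_n\}_{n\ge 0}]$ for some positive $\ve$--gpo associated with $x$, the exponential contraction is a direct consequence of Proposition \ref{Prop-stable-manifolds}(4). The statement for $W^u$ is symmetric.

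Part (3) is the main work. Given $x\in R_0$ with $f(x)\in R_1$, pick $\un v\in\Sigma^\#$ with $\pi(\un v)=x$, $R_0\subset Z(v_0)$, $R_1\subset Z(v_1)$; existence follows from the covering property (Proposition \ref{Prop-Z}(1)) applied to $x$ and $f(x)$ together with the edge $v_0\overset{\ve}{\to}v_1$. By the symbolic Markov property (Proposition \ref{Prop-Z}(4)), for $y\in W^s(x,R_0)\subset W^s(x,Z(v_0))$ we get $f(y)\in W^s(f(x),Z(v_1))$. It remains to check that $f(y)\in T_{i'j'}^{\alpha'\beta'}$ for every $T_{i'j'}^{\alpha'\beta'}\supset R_1$. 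Using the Compatibility lemma stated before Proposition \ref{Prop-R} (which says that $f$ intertwines Smale brackets when the relevant charts are linked by an edge), each membership condition of the form $W^{s/u}(f(x),Z_{i'})\cap Z_{j'}\neq\emptyset$ (or $=\emptyset$) can be translated, by applying $f^{-1}$ to the bracket point, into the analogous condition at $x$ involving charts that refine $Z(v_0)$. This translation is a finite case analysis on $(\alpha',\beta')\in\{s,\emptyset\}\times\{u,\emptyset\}$, and it uses that $s$- and $u$-fibres in $\mathfs Z$ are equivalence classes; the hypothesis $x\in R_0$ provides the desired containments, and we conclude $f(y)\in R_1$, hence $f(y)\in W^s(f(x),R_1)$. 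The analogous argument for $u$--fibres is symmetric.

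The main obstacle is the bookkeeping in part (3): one must verify that each of the four labels $(\alpha',\beta')$ at the target $R_1$ pulls back to a valid label at the source $R_0$, using the Compatibility lemma and the fact that $\mathfs R$ is locally finite (so only finitely many $T_{ij}^{\alpha\beta}$'s are in play at each step). This is exactly the content of Sina{\u\i}'s original refinement and of \cite[Prop.~11.5]{Sarig-JAMS}, and in our setting the argument carries over unchanged since all estimates needed to apply the Compatibility lemma take place inside $R[10Q_\ve(x)]$, where Theorem \ref{Thm-inverse}(6) is available.
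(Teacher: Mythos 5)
Your proposal follows the same route as the paper, which here simply says that the argument of \cite[Props.~11.5 and 11.7]{Sarig-JAMS} carries over, and your parts (2) and (3) give a fair sketch of that argument.

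There is, however, a gap in your part (1) that is worth making explicit. An atom $R\in\mathfs R$ can be contained in several $Z_{i}$'s simultaneously, and both the conditions defining $R$ and the definition of $W^{s/u}(x,R)$ run over \emph{all} $T_{ij}^{\alpha\beta}\supset R$ with varying $i$. After you fix one $Z_i\supset R$ and put $z:=[x,y]_{Z_i}$, the "inherited verbatim" argument ($W^s(z,Z_i)=W^s(x,Z_i)$, $W^u(z,Z_i)=W^u(y,Z_i)$) only yields $z\in T_{ij}^{\alpha\beta}$ for this fixed $i$; it does not by itself give $z\in T_{i'j'}^{\alpha'\beta'}$ for $i'\neq i$ with $R\subset Z_{i'}$, nor $z\notin Z_{i'}$ when $R\cap Z_{i'}=\emptyset$, nor $z\in W^{s/u}(x,Z_{i'})$ as required by the definition of $W^{s/u}(x,R)$. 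Closing this requires the overlapping-charts lemma (part (2) of the lemma preceding Proposition~\ref{Prop-R}), which is exactly the extra input Sarig's Prop.~11.5 uses. You invoke the compatibility/overlapping lemma in part (3) but not in part (1), where it is equally essential. Since you ultimately defer to \cite[Prop.~11.5]{Sarig-JAMS}, this is an omission of detail rather than a wrong idea, but as written the chain "$z\in T_{ij}^{\alpha\beta}$ for all $T_{ij}^{\alpha\beta}\supset R$ with this $i$, hence $z\in R$" is not a complete deduction.
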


When $M$ is compact and $f$ is a diffeomorphism, this is \cite[Prop. 11.5 and 11.7]{Sarig-JAMS}
and the same proof works in our case.

\subsection{A finite-to-one Markov extension}

We construct a new symbolic coding of $f$.
Let $\widehat{\mathfs G}=(\widehat V,\widehat E)$ be the oriented graph with vertex set
$\widehat V=\mathfs R$ and edge set $\widehat E=\{R\to S:R,S\in\mathfs R\text{ s.t. }f(R)\cap S\neq\emptyset\}$,
and let $(\widehat\Sigma,\widehat\sigma)$ be the TMS induced by $\widehat{\mathfs G}$.
The ingoing and outgoing degree of every vertex in $\widehat\Sigma$ is finite.

\medskip
For $\ell\in\Z$ and a path $R_m\to\cdots\to R_n$ on $\widehat{\mathfs G}$ define
$_\ell[R_m,\ldots,R_n]:=f^{-\ell}(R_m)\cap\cdots\cap f^{-\ell-(n-m)}(R_n)$, the set of points whose itinerary
from $\ell$ to $\ell+(n-m)$ visits the rectangles $R_m,\ldots,R_n$. The crucial property that
gives the new coding is that $_\ell[R_m,\ldots,R_n]\neq\emptyset$. This follows by induction, using the
Markov property of $\mathfs R$ (Proposition \ref{Prop-R}(3)).

\medskip
The map $\pi$ defines similar sets: for $\ell\in\Z$ and a path
$v_m\overset{\ve}{\to}\cdots\overset{\ve}{\to}v_n$ on $\Sigma$ let
$
Z_\ell[v_m,\ldots,v_n]:=\{\pi(\un w):\un w\in\Sigma^\#\text{ and }w_\ell=v_m,\ldots,w_{\ell+(n-m)}=v_n\}$.
There is a relation between $\Sigma$ and $\widehat\Sigma$ in terms of these sets:
if $\{R_n\}_{n\in\Z}\in\widehat\Sigma$ then there exists $\{v_n\}_{n\in\Z}\in\Sigma$
s.t. $_{-n}[R_{-n},\ldots,R_n]\subset Z_{-n}[v_{-n},\ldots,v_n]$ for all $n\geq 0$ (in particular $R_n\subset Z(v_n)$
for all $n\in\Z$). This fact is proved as in \cite[Lemma 12.2]{Sarig-JAMS}.
By Proposition \ref{Prop-R}(2), $\bigcap_{n\geq 0}\overline{_{-n}[R_{-n},\ldots,R_n]}$
is the intersection of a descending chain of nonempty closed sets with
diameters converging to zero.

\medskip
\noindent
{\sc The map $\widehat\pi:\widehat\Sigma\to M$:} Given $\un R=\{R_n\}_{n\in\Z}\in\widehat\Sigma$,
$\widehat\pi(\un R)$ is defined by the identity
$$
\{\widehat\pi(\un R)\}:=\bigcap_{n\geq 0}\overline{_{-n}[R_{-n},\ldots,R_n]}.
$$

\medskip
The triple $(\widehat\Sigma,\widehat\sigma,\widehat\pi)$ is the one that satisfies Theorem \ref{Thm-main}.

\begin{theorem}\label{Thm-widehat-pi}
The following holds for all $\ve>0$ small enough.
\begin{enumerate}[{\rm (1)}]
\item $\widehat\pi:\widehat\Sigma\to M$ is H\"older continuous.
\item $\widehat\pi\circ\widehat\sigma=f\circ\widehat\pi$.
\item $\widehat\pi[\widehat\Sigma^\#]\supset {\rm NUH}_\chi^\#$, hence
$\pi[\widehat\Sigma^\#]$ carries all $f$--adapted $\chi$--hyperbolic measures. 
\item Every point of $\widehat\pi[\widehat\Sigma^\#]$ has finitely many pre-images in $\widehat\Sigma^\#$.
\end{enumerate}
\end{theorem}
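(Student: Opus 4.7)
Parts (2) and (1) are essentially routine adaptations of the arguments in \cite[\S12]{Sarig-JAMS}. For (2), shifting $\un R$ by one index in the defining intersection for $\widehat\pi$ reindexes the cylinders $_{\ell}[R_m,\ldots,R_n]$, which is equivalent to applying $f$ outside the intersection. For (1), the paragraph preceding the theorem produces, for every $\un R\in\widehat\Sigma$, an $\ve$--gpo $\un v\in\Sigma$ with $_{-n}[R_{-n},\ldots,R_n]\subset Z_{-n}[v_{-n},\ldots,v_n]$. The latter cylinder consists of translates of intersections of $s$-- and $u$--admissible manifolds whose $C^1$ distances are controlled H\"older--continuously in the symbolic distance by Proposition~\ref{Prop-stable-manifolds}(5); combined with the uniform transversality from Lemma~\ref{Lemma-admissible-manifolds}(2), this gives $\mathrm{diam}(\,_{-N}[R_{-N},\ldots,R_N])\leq K\theta^N$ for some $\theta<1$, whence $\widehat\pi$ is H\"older.

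For part (3), fix $x\in{\rm NUH}_\chi^\#$. Theorem~\ref{Thm-coarse-graining}(2) yields an $\ve$--gpo $\un v=\{\Psi_{x_n}^{p^s_n,p^u_n}\}\in\mathfs A^\Z$ shadowing $x$, and its construction gives $p^s_n\wedge p^u_n=e^{\pm\ve/3}q_\ve(f^n(x))$. The definition of ${\rm NUH}_\chi^\#$ forces $\limsup_{n\to\pm\infty}(p^s_n\wedge p^u_n)>0$, so discreteness of $\mathfs A$ (Theorem~\ref{Thm-coarse-graining}(1)) together with a pigeonhole argument makes some vertex repeat infinitely often in each direction, whence $\un v\in\Sigma^\#$. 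Since $\mathfs R$ partitions ${\rm NUH}_\chi^\#$, let $R_n\in\mathfs R$ be the unique cell containing $f^n(x)$; then $f^{n+1}(x)\in f(R_n)\cap R_{n+1}\neq\emptyset$ and $R_n\subset Z(v_n)$, so $\un R:=\{R_n\}_{n\in\Z}\in\widehat\Sigma$. The local finiteness of $\mathfs R$ inside each $Z\in\mathfs Z$, applied at the recurring indices of $\un v$, promotes $\un R$ to $\widehat\Sigma^\#$; finally $f^n(x)\in R_n$ together with shrinking cylinder diameters gives $\widehat\pi(\un R)=x$.

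The substantive content is (4), whose main obstacle is to establish that $\pi:\Sigma^\#\to M$ is itself finite-to-one. Granting this: if $x=\widehat\pi(\un R)=\widehat\pi(\un R')$ with $\un R,\un R'\in\widehat\Sigma^\#$, each lifts to $\un v,\un v'\in\Sigma^\#$ with $\pi(\un v)=\pi(\un v')=x$ and $R_n\subset Z(v_n)$, $R'_n\subset Z(v'_n)$; there are then finitely many $\un v'$ by the finite-to-one property, and for each such $\un v'$ the cells $R'_n$ lie in the finite set $\{R\in\mathfs R:R\subset Z(v'_n)\}$, so $\un R'$ has finitely many candidates. To prove the finite-to-one property of $\pi$: given $\pi(\un v)=\pi(\un v')$, parts (1) and (5) of Theorem~\ref{Thm-inverse} bound $d(x_n,x'_n)$ and the ratios of the $p^{s,u}$--parameters; along any subsequence $n_k$ on which $v_{n_k}$ is constant, the parameters $p^s_{n_k}\wedge p^u_{n_k}$ are bounded below, hence $(p'_{n_k})^s\wedge(p'_{n_k})^u$ is also bounded below, so discreteness of $\mathfs A$ restricts $v'_{n_k}$ to a finite set, and analogously on the past. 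A graph-theoretic argument identical to \cite[Thm~5.6]{Sarig-JAMS} then promotes this local finiteness to a global bound on $\#\pi^{-1}(x)\cap\Sigma^\#$.
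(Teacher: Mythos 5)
Your treatment of parts (1)--(3) follows the same route as the paper (which simply imports \cite[Thm.~12.5]{Sarig-JAMS}); the only step you should not present as automatic is the assertion that $x\in{\rm NUH}_\chi^\#$ forces $\limsup_{n\to\pm\infty}(p^s_n\wedge p^u_n)>0$ for the shadowing gpo. The definition of ${\rm NUH}_\chi^\#$ controls $q^s_\ve$ along a forward subsequence and $q^u_\ve$ along a backward subsequence, whereas membership in $\Sigma^\#$ requires \emph{both} coordinates $p^s_n$ \emph{and} $p^u_n$ to be bounded below along a common subsequence in each direction; passing from the one-sided conditions to the two-sided one is the content of a separate lemma in \cite{Sarig-JAMS} and needs to be invoked or reproved, not read off the definitions.

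Part (4) contains a genuine gap. First, your reduction rests on the claim that $\pi:\Sigma^\#\to M$ is finite-to-one, which is false: the paper states explicitly, right after Proposition \ref{Prop-pi}, that $(\Sigma,\sigma,\pi)$ does \emph{not} satisfy Theorem \ref{Thm-main} because $\pi$ ``might be (and usually is) infinite-to-one'' --- and Theorem \ref{Thm-main}(3) concerns precisely $\Sigma^\#$. The reason is visible in your own argument: Theorem \ref{Thm-inverse} together with discreteness of $\mathfs A$ only shows that, for each fixed $n$, the symbol $v'_n$ of another preimage ranges over a finite set. A product of finite sets over $n\in\Z$ is uncountable, so ``finitely many candidates per coordinate'' does not bound the number of sequences; this invalid inference is used a second time when you conclude that $\un R'$ has finitely many candidates because each $R'_n$ lies in the finite set $\{R\in\mathfs R:R\subset Z(v'_n)\}$. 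Repairing the finite-to-one property is exactly why the paper passes from $\mathfs Z$ to the refined partition $\mathfs R$ and from $\Sigma$ to $\widehat\Sigma$: the actual argument of \cite[Thm.~12.8]{Sarig-JAMS} and \cite[Thm.~5.6(5)]{Lima-Sarig} exploits that $\mathfs R$ is a genuine partition (so the itinerary $\{R_n\}$ is pinned down by $x$ except for the ambiguity coming from closures) together with a counting of ``affiliated'' rectangles at the recurrent symbols, producing the bound $N(R)N(S)$ quoted in the paper. That mechanism --- the single substantive idea behind part (4) --- is absent from your sketch.
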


When $M$ is compact and $f$ is a diffeomorphism, parts (1)--(3) are \cite[Thm. 12.5]{Sarig-JAMS}
and part (4) is \cite[Thm. 5.6(5)]{Lima-Sarig}.
The same proofs work in our case, and the bound
on the number of pre-images is exactly the same: there is a function 
$N:\mathfs R\to\N$ s.t. if $x=\widehat\pi(\un R)$ with $R_n=R$ for infinitely many $n>0$ and $R_n=S$
for infinitely many $n<0$ then $\#\{\un S\in\widehat\Sigma^\#:\widehat\pi(\un S)=x\}\leq N(R)N(S)$.

\section*{Appendix A: Underlying assumptions}

Remember the definition of $\widetilde{A}\in\mathfs L_{x,x'}$ for $A\in\mathfs L_{y,z}$ and
$y\in D_x,z\in D_{x'}$. Remember also the definition of $\tau=\tau_x:D_x\times D_x\to \mathfs L_x$
by $\tau(y,z)=\widetilde{d(\exp{y}^{-1})_z}$.
Throughout the text, we assume that there are constants $\mathfrak K,a>1$ s.t. for all
$x\in M\backslash\mathfs D$ there is $d(x,\mathfs D)^a<\mathfrak r(x)<1$ 
s.t. for $D_x:=B(x,2\mathfrak r(x))$ it holds:
\begin{enumerate}[ii]
\item[(A1)] If $y\in D_x$ then $\inj(y)\geq 2\mathfrak r(x)$, $\exp{y}^{-1}:D_x\to T_yM$
is a diffeomorphism onto its image, and
$\tfrac{1}{2}(d(x,y)+\|v-P_{y,x}w\|)\leq \Sas(v,w)\leq 2(d(x,y)+\|v-P_{y,x} w\|)$ for all $y\in D_x$ and
$v\in T_xM,w\in T_yM$ s.t. $\|v\|,\|w\|\leq 2\mathfrak r(x)$, where 	
$P_{y,x}:=P_\gamma$ is the radial geodesic $\gamma$ joining $y$ to $x$.
\item[(A2)] If $y_1,y_2\in D_x$ then
$d(\exp{y_1}v_1,\exp{y_2}v_2)\leq 2\Sas(v_1,v_2)$ for $\|v_1\|$, $\|v_2\|\leq 2\mathfrak r(x)$,
and $\Sas(\exp{y_1}^{-1}z_1,\exp{y_2}^{-1}z_2)\leq 2[d(y_1,y_2)+d(z_1,z_2)]$
for $z_1,z_2\in D_x$ where the expression makes sense.
In particular $\|d(\exp{x})_v\|\leq 2$ for $\|v\|\leq 2\mathfrak r(x)$,
and $\|d(\exp{x}^{-1})_y\|\leq 2$ for $y\in D_x$.
\item[(A3)] If $y_1,y_2\in D_x$ then
$$
\|\widetilde{d(\exp{y_1})_{v_1}}-\widetilde{d(\exp{y_2})_{v_2}}\|
\leq d(x,\mathfs D)^{-a}\Sas(v_1,v_2)\leq \rho(x)^{-a}\Sas(v_1,v_2)
$$
for all $\|v_1\|,\|v_2\|\leq 2\mathfrak r(x)$ and 
\begin{align*}
\|\tau(y_1,z_1)-\tau(y_2,z_2)\|&\leq d(x,\mathfs D)^{-a}[d(y_1,y_2)+d(z_1,z_2)]\\
&\leq \rho(x)^{-a}[d(y_1,y_2)+d(z_1,z_2)]
\end{align*}
for all $z_1,z_2\in D_x$.
\item[(A4)] If $y_1,y_2\in D_x$ then the map $\tau(y_1,\cdot)-\tau(y_2,\cdot):D_x\to \mathfs L_x$
has Lipschitz constant $\leq d(x,\mathfs D)^{-a}d(y_1,y_2)\leq \rho(x)^{-a}d(y_1,y_2)$.
\item[(A5)] If $y\in D_x$ then $\|df_y^{\pm 1}\|\leq d(x,\mathfs D)^{-a}\leq \rho(x)^{-a}$.
\item[(A6)] If $y_1,y_2\in D_x$ and $f(y_1),f(y_2)\in D_{x'}$ then
$\|\widetilde{df_{y_1}}-\widetilde{df_{y_2}}\|\leq \mathfrak Kd(y_1,y_2)^\beta$,
and if $y_1,y_2\in D_x$ and $f^{-1}(y_1),f^{-1}(y_2)\in D_{x''}$ then
$\|\widetilde{df_{y_1}^{-1}}-\widetilde{df_{y_2}^{-1}}\|\leq \mathfrak Kd(y_1,y_2)^\beta$.
\item[(A7)] $\|df^{\pm 1}_x\|\geq m(df^{\pm 1}_x)\geq \rho(x)^a$.
\end{enumerate}

\section*{Appendix B: Standard proofs and adaptations of \cite{Sarig-JAMS}}\label{Appendix-standard-proofs}

In this appendix we prove some statements claimed throughout the text, most of them consisting
of adaptations of proofs in \cite{Sarig-JAMS}. The main issue is the lack of higher
regularity of the exponential map. The results of \cite{Sarig-JAMS} are technical but extremely
well-written, so rewriting it to our context would probably increase the technicalities and decrease
the clarity. Hence we decided to write this appendix as a tutorial:
we follow the proofs of \cite{Sarig-JAMS} as most as possible, mentioning the necessary changes. 
The main changes are in the geometrical estimates on $M$:
some Lipschitz constants of \cite{Sarig-JAMS} are substituted by terms of
the form $d(x,\mathfs D)^{-a}$. We then show that our
definition of $Q_\ve(x)$ is strong enough to cancel out these terms.
Since the proofs of \cite{Sarig-JAMS} have freedom in the choice of exponents,
we obtain the same final results and therefore (almost always) the same statements of \cite{Sarig-JAMS}.

\begin{proof}[Proof of Lemma \ref{Lemma-admissible-manifolds}.]
Part (1) is proved exactly as in \cite[Prop. 4.11(1)--(2)]{Sarig-JAMS}.
We concentrate on part (2). Let $\eta=p^s\wedge p^u$.
The estimate of $\tfrac{\sin\angle(V^s,V^u)}{\sin\alpha(x)}$ in \cite{Sarig-JAMS} is
divided into the analysis of four factors. The estimate of the first
two factors is identical; the difference is in the estimates of the remaining two factors.

\medskip
By (A3), if $x\in M\backslash\mathfs D$ and $\|v\|\leq 2\mathfrak r(x)$ then
$|{\rm det}[d(\exp{x})_v]-1|\leq 4d(x,\mathfs D)^{-a}\|v\|$,
i.e. we substitute $K_1$ in \cite[pp. 407]{Sarig-JAMS} by $4d(x,\mathfs D)^{-a}$.
With this notation,
$K_1\eta<4d(x,\mathfs D)^{-a}Q_\ve(x)^{\beta/72}\eta^{1-\beta/72}<4\ve^{1/24}\eta^{1-\beta/72}<\eta^{2\beta/3}$
for $\ve>0$ small, then the third factor is $e^{\pm 2\eta^{2\beta/3}}$. To estimate the fourth
factor, note that again by (A3) if $x\in M\backslash\mathfs D$ and $\|v\|\leq 2\mathfrak r(x)$
then $\|\widetilde{d(\exp{x})_v}-{\rm Id}\|\leq d(x,\mathfs D)^{-a}\|v\|$,
i.e. we substitute $K_2$ in \cite[pp. 407]{Sarig-JAMS} by $d(x,\mathfs D)^{-a}$. Noting as above
that $3K_2\eta<\eta^{2\beta/3}$, we get that the fourth factor is $e^{\pm\tfrac{1}{3}\eta^{\beta/4}}$
as in \cite[pp. 408]{Sarig-JAMS}.

\medskip
The estimates of $|\cos\angle(V^s,V^u)-\cos\alpha(x)|$ work as in \cite{Sarig-JAMS} after using again
that $K_2\eta<\eta^{2\beta/3}$, in which case $K_3=24$. 
\end{proof}

\begin{proof}[Proof of Proposition \ref{Prop-graph-transform}.]
We follow the proofs of \cite[Prop. 4.12 and 4.14]{Sarig-JAMS},
with the modifications below.
\begin{enumerate}[$\circ$]
\item Pages 411--412: in claim 3, it is enough to have $|G'(0)|<\tfrac{1}{2}(q^s\wedge q^u)^{\beta/3}$.
Proceed as in \cite{Sarig-JAMS} to get that
$$
|G'(0)|< e^{-\chi+\ve}\left[|A||F'(0)|+\tfrac{2}{3}\ve^{\beta/3}(p^s\wedge p^u)^{\beta/3}
+6\ve (p^s\wedge p^u)^{\beta/3}\right]
$$
and then note that for $\ve>0$ small enough this is at most
\begin{align*}
&e^{-\chi+\ve}\left[\tfrac{1}{2}e^{-\chi}+\tfrac{2}{3}\ve^{\beta/3}+6\ve\right](p^s\wedge p^s)^{\beta/3}\\
&\leq e^{-\chi+\ve+\ve\beta/3}\left[\tfrac{1}{2}e^{-\chi}+\tfrac{2}{3}\ve^{\beta/3}+6\ve\right](q^s\wedge q^s)^{\beta/3}
<\tfrac{1}{2}(q^s\wedge q^u)^{\beta/3}.
\end{align*}
\item Page 412: in claim 4, it is enough to have $\|G'\|_0+\Hol{\beta/3}(G')<\tfrac{1}{2}$.
Proceed as in \cite{Sarig-JAMS} to get that
$\|G'\|_0+\Hol{\beta/3}(G')<e^{-\chi+3\ve}\left[\tfrac{1}{2}e^{-\chi}+\tfrac{3}{2}\ve\right]$.
This is $<\tfrac{1}{2}$ when $\ve>0$ is small.
\item Pages 414--415: in the proof of part 2, proceed as in \cite{Sarig-JAMS} to get that
$$
\|G_1-G_2\|_0\leq (|A|+3\ve^2)(1+\ve^2+3\ve^3)\|F_1-F_2\|_0
$$
and note that $ (|A|+3\ve^2)(1+\ve^2+3\ve^3)<(e^{-\chi}+3\ve^2)(1+\ve^2+3\ve^3)<e^{-\chi/2}$ when $\ve>0$ is small enough.
\end{enumerate}
\end{proof}

\begin{proof}[Proof of inequality {\rm (\ref{inequality-C})}.]
We will use assumption (A3) as stated in section \ref{Section-introduction}:
\begin{enumerate}[ii]
\item[(A3)] $\|df_x\|< d(x,\mathfs D)^{-a}$ and
$\|df^{-1}_x\|< d(x,\mathfs D)^{-a}$ for all $x\in M\backslash\mathfs D$.
\end{enumerate}
We have:
\begin{align*}
&s(f^{-1}(x))^2=2\sum_{n\geq 0}e^{2n\chi}\|df^ne^s_{f^{-1}(x)}\|^2=
2+2e^{2\chi}\|dfe^s_{f^{-1}(x)}\|^2\sum_{n\geq 0}e^{2n\chi}\|df^ne^s_x\|^2\\
&=
2+e^{2\chi}\|dfe^s_{f^{-1}(x)}\|^2s(x)^2\leq (1+e^{2\chi}\|dfe^s_{f^{-1}(x)}\|^2)s(x)^2.
\end{align*}
By (A3),
$\tfrac{s(f^{-1}(x))^2}{s(x)^2}\leq 1+e^{2\chi} d(f^{-1}(x),\mathfs D)^{-2a}\leq 1+e^{2\chi}\rho(x)^{-2a}$.
We also have that
\begin{align*}
&u(f^{-1}(x))^2=2\sum_{n\geq 0}e^{2n\chi}\|df^{-n}e^u_{f^{-1}(x)}\|^2=2\|df^{-1}e^u_x\|^{-2}
\sum_{n\geq 0}e^{2n\chi}\|df^{-(n+1)}e^u_x\|^2\\
&=2e^{-2\chi}\|df^{-1}e^u_x\|^{-2}\sum_{n\geq 1}e^{2n\chi}\|df^{-n}e^u_x\|^2=
e^{-2\chi}\|df^{-1}e^u_x\|^{-2}(u(x)^2-2)\\
&<\|df^{-1}e^u_x\|^{-2} u(x)^2,
\end{align*}
hence by (A6) we get that
$\tfrac{u(f^{-1}(x))^2}{u(x)^2}\leq \rho(x)^{-2a}<1+e^{2\chi}\rho(x)^{-2a}$.
Finally, applying (\ref{gen-ineq-angles}) for $L=df^{-1}_x$, $v=e^s_x$, $w=e^u_x$ and
using (A3), we have
$$
\tfrac{\sin\alpha(x)}{\sin\alpha(f^{-1}(x))}=\tfrac{\sin\angle(e^s_x,e^u_x)}{\sin\angle(df^{-1}_x e^s_x,df^{-1}_xe^u_x)}
\leq \|df^{-1}_x\|\|df_{f^{-1}(x)}\|<\rho(x)^{-2a}.
$$
Since $\|\cdot \|\leq \|\cdot\|_{\rm Frob}\leq \sqrt{2}\|\cdot\|$, the above inequalities and
Lemma \ref{Lemma-linear-reduction} give that
\begin{align*}
&\|C_\chi(f^{-1}(x))^{-1}\|\leq \|C_\chi(f^{-1}(x))^{-1}\|_{\rm Frob}
\leq\rho(x)^{-2a}\sqrt{1+e^{2\chi}\rho(x)^{-2a}}\|C_\chi(x)^{-1}\|_{\rm Frob}\\
&\leq 2\rho(x)^{-2a}(1+e^\chi\rho(x)^{-a})\|C_\chi(x)^{-1}\|.
\end{align*}
\end{proof}

\begin{proof}[Proof of Proposition \ref{Prop-stay-window}.]
The proof of part (2) is identical to the proof of \cite[Prop. 6.4]{Sarig-JAMS},
and the proof of part (1)(a)--(b) is identical to the proof of \cite[Prop. 6.3(1)--(2)]{Sarig-JAMS}.
To prove (1)(c), we make some modifications in the proof of \cite[Prop. 6.3(3))]{Sarig-JAMS}.
We start with the claim below.

\medskip
\noindent
{\sc Claim:} If $y,z\in D_x$ and $v\in T_yM,w\in T_zM$ with $\|v\|=\|w\|=1$ then
\begin{align*}
&|\|df_y^{\pm 1}(v)\|-\|df_z^{\pm 1}(w)\||\leq \mathfrak K\rho(x)^{-a}[d(y,z)^\beta+\|v-P_{z,y}w\|]\hspace{.2cm}\text{and}\\
&\left|\frac{\|df_y^{\pm 1}(v)\|}{\|df_z^{\pm 1}(w)\|}-1\right|\leq \mathfrak K\rho(x)^{-2a}[d(y,z)^\beta+\|v-P_{z,y}w\|].
\end{align*}
In particular
$\left|\log\|df_y^{\pm 1}(v)\|-\log\|df_z^{\pm 1}(w)\|\right|\leq \mathfrak K\rho(x)^{-2a}[d(y,z)^\beta+\|v-P_{z,y}w\|]$.

\medskip
\noindent
{\em Proof of the claim.} The inequalities are consequences of (A5)--(A7). Since
these assumptions are symmetric on $f$ and $f^{-1}$, we only prove the claim for $f$.
Note that:
\begin{align*}
&|\|df_y(v)\|-\|df_z(w)\||\leq \|\widetilde{df_y}(P_{y,x}v)-\widetilde{df_z}(P_{z,x}w)\|\\
&\leq \|\widetilde{df_y}-\widetilde{df_z}\|+\|\widetilde{df_z}\|\|v-P_{z,y}w\|\leq
\mathfrak Kd(y,z)^\beta+\rho(x)^{-a}\|v-P_{z,y}w\|\\
&\leq \mathfrak K\rho(x)^{-a}[d(y,z)^\beta+\|v-P_{z,y}w\|].
\end{align*}
The second inequality follows from the first one and from (A7).

\medskip
Let us now prove part (1)(c). Write $V^s=V^s[\{\Psi_{x_n}^{p^s_n,p^u_n}\}_{n\geq 0}]$. By the claim,
\begin{align*}
&|\log\|df^n e^s_y\|-\log\|df^n e^s_z\||\leq \sum_{k=0}^{n-1}|\log\|df e^s_{f^k(y)}\|-\log\|df e^s_{f^k(z)}\||\\
&\leq \sum_{k=0}^{n-1}\mathfrak K\rho(x_k)^{-2a}[d(f^k(y),f^k(z))^\beta+
\|e^s_{f^k(y)}-P_{f^k(z),f^k(y)}e^s_{f^k(z)}\|].
\end{align*}
By part (1)(a) and the definition of $Q_\ve(x_k)$,
\begin{align*}
&\rho(x_k)^{-2a}d(f^k(y),f^k(z))^\beta<\ve^{1/12}Q_\ve(x_k)^{-\beta/36}6e^{-\frac{\beta\chi}{2} k}(p^s_0)^\beta\\
&<6\ve^{1/12}(p^s_k)^{-\beta/36}e^{-\frac{\beta\chi}{2} k}(p^s_0)^\beta.
\end{align*}
By (GPO2) we have $p^s_0\leq e^{\ve k}p^s_k$, then for small $\ve>0$ the last expression above is
\begin{align*}
\leq 6\ve^{1/12}(p^s_0)^{-\beta/36}e^{-\frac{\beta\chi}{2}k+\frac{\beta\ve}{36}k}(p^s_0)^\beta
< 6\ve^{1/12}e^{-\frac{\beta\chi}{3}k}(p^s_0)^{\beta/4}
\end{align*}
and thus
$$
\sum_{k=0}^{n-1}\mathfrak K\rho(x_k)^{-2a}d(f^k(y),f^k(z))^\beta\leq
\tfrac{6\mathfrak K\ve^{1/12}}{1-e^{-\frac{\beta\chi}{3}}}(p^s_0)^{\beta/4}<\tfrac{1}{2}(p^s_0)^{\beta/4}.
$$
We now estimate the second sum. Call $N_k:=\|e^s_{f^k(y)}-P_{f^k(z),f^k(y)}e^s_{f^k(z)}\|$.
Write $f^k(y)=\Psi_{x_k}(\un y_k)=\Psi_{x_k}(y_k,F_k(y_k))$ and
$f^k(z)=\Psi_{x_k}(\un z_k)=\Psi_{x_k}(z_k,F_k(z_k))$, where $F_k$ is the representing function
of $V^s[\{\Psi_{x_n}^{p^s_n,p^u_n}\}_{n\geq k}]$. In part (1), it is proved
that $\|\un y_k-\un z_k\|\leq 3p^s_0e^{-\frac{\chi}{2}k}$.
As in \cite[pp. 418--419]{Sarig-JAMS}, 
\begin{align*}
&N_k\leq 2\|C_\chi(x_k)^{-1}\|\|\un y_k-\un z_k\|^{\beta/3}\\
&\hspace{.85cm}+4\|C_\chi(x_k)^{-1}\|
\left\|\widetilde{d(\exp{x_k})_{C_\chi(x_k)\un y_k}\circ C_\chi(x_k)}-\widetilde{d(\exp{x_k})_{C_\chi(x_k)\un z_k}\circ C_\chi(x_k)}\right\|
\end{align*}
which, by (A3), is 
$\leq 2\|C_\chi(x_k)^{-1}\|\|\un y_k-\un z_k\|^{\beta/3}+4\|C_\chi(x_k)^{-1}\|\rho(x_k)^{-a}\|\un y_k-\un z_k\|$.
For $\ve>0$ small enough
\begin{align*}
&4\rho(x_k)^{-a}\|\un y_k-\un z_k\|^{\beta/72}\leq 12\rho(x_k)^{-a}(p^s_0)^{\beta/72}e^{-\frac{\beta\chi}{144}k}\\
&\leq 12\rho(x_k)^{-a}(p^s_k)^{\beta/72}e^{-\frac{\beta\chi}{144}k+\frac{\beta\ve}{72}k}
\leq 12\ve^{1/24}e^{-\frac{\beta\chi}{144}k+\frac{\beta\ve}{72}k}<1,
\end{align*}
thus $N_k\leq 3\|C_\chi(x_k)^{-1}\|\|\un y_k-\un z_k\|^{\beta/3}$. Hence for small $\ve>0$
\begin{align*}
&\rho(x_k)^{-2a}N_k\leq 3\|C_\chi(x_k)^{-1}\|\rho(x_k)^{-2a}\|\un y_k-\un z_k\|^{\beta/3}\\
&\leq 9\|C_\chi(x_k)^{-1}\|\rho(x_k)^{-2a}(p^s_0)^{\beta/3}e^{-\frac{\beta\chi}{6}k}\\
&\leq 9\|C_\chi(x_k)^{-1}\|\rho(x_k)^{-2a}(p^s_0)^{\beta/12}e^{-\frac{\beta\chi}{6}k}(p^s_0)^{\beta/4}\\
&\leq 9\|C_\chi(x_k)^{-1}\|\rho(x_k)^{-2a}(p^s_k)^{\beta/12}e^{-\frac{\beta\chi}{6}k+\frac{\beta\ve}{12}k}(p^s_0)^{\beta/4}\\
&\leq 9\|C_\chi(x_k)^{-1}\|(p^s_k)^{\beta/24}\rho(x_k)^{-2a}(p^s_k)^{\beta/36}e^{-\frac{\beta\chi}{6}k+\frac{\beta\ve}{12}k}(p^s_0)^{\beta/4}\\
&\leq 9\ve^{5/24}e^{-\frac{\beta\chi}{7}k}(p^s_0)^{\beta/4}
\end{align*}
and therefore
$$
\sum_{k=0}^{n-1}\mathfrak K\rho(x_k)^{-2a}\|e^s_{f^k(y)}-P_{f^k(z),f^k(y)}e^s_{f^k(z)}\|\leq
\tfrac{9\mathfrak K\ve^{5/24}}{1-e^{-\beta\chi/7}}(p^s_0)^{\beta/4}<\tfrac{1}{2}(p^s_0)^{\beta/4}.
$$
The conclusion is that $|\log\|df^n e^s_y\|-\log\|df^n e^s_z\||<(p^s_0)^{\beta/4}<Q_\ve(x)^{\beta/4}$.
\end{proof}

\begin{proof}[Proof of Lemma \ref{Lemma-rotations}]
It is enough to prove the case $n=0$. Write $\Psi_{x_0}^{p^s_0,p^u_0}=\Psi_{x}^{p^s,p^u}$,
$\Psi_{y_0}^{q^s_0,q^u_0}=\Psi_{y}^{q^s,q^u}$, $p=p^s\wedge p^u$, $q=q^s\wedge q^u$.
Write $\widetilde{C_\chi(x)}=R_xC_x$, $\widetilde{C_\chi(y)}=R_yC_y$.
Since $R_y^{-1}R_x$ is a rotation matrix, it is enough to estimate its angle.
As in \cite[pp. 372]{Sarig-JAMS}, $\exists\lambda\neq 0$ s.t.
$C_x\un a=\lambda[\widetilde{d(\exp{x})_{C_\chi(x)\un\zeta}}]^{-1}
[\widetilde{d(\exp{y})_{C_\chi(y)\un\eta}}]C_y\un b$ where:
\begin{enumerate}[$\circ$]
\item $\un\zeta\in R[10^{-2}p]$, $\un a=\colvec{1 \\ a}$ and $|a|<p^{\beta/3}$.
\item $\un\eta\in R[10^{-2}q]$, $\un b=\colvec{1 \\ b}$ and $|b|<q^{\beta/3}$.
\end{enumerate}
The proof is based on three claims. Write $\vec{v}\propto\vec{w}$ if $\vec{v}=t\vec{w}$ for some $t\neq 0$.

\medskip
\noindent
{\sc Claim 1:} $C_x\un a\propto R_x\colvec{1\pm p^{\beta/4}\\ \pm p^{\beta/4}}$ and
$C_y\un a\propto R_y\colvec{1\pm q^{\beta/4}\\ \pm q^{\beta/4}}$.

\medskip
The proof is the same as in \cite[pp. 372]{Sarig-JAMS}.

\medskip
\noindent
{\sc Claim 2:} If $x,y\in D_z$ and $\|v\|,\|w\|\leq \mathfrak r(z)$ then
$$
\|[\widetilde{d(\exp{x})_v}]^{-1}[\widetilde{d(\exp{y})_w}]-{\rm Id}\|
<2d(z,\mathfs D)^{-a}\Sas(v,w).
$$

\medskip
The proof is a direct consequence of (A2)--(A3). In particular, if we write
$E:=[\widetilde{d(\exp{x})_{C_\chi(x)\un\zeta}}]^{-1}[\widetilde{d(\exp{y})_{C_\chi(y)\un\eta}}]-{\rm Id}$
then
\begin{align*}
&\|E\|<2d(y,\mathfs D)^{-a}\Sas(C_\chi(x)\un\zeta,C_\chi(y)\un\eta)
\leq 4d(y,\mathfs D)^{-a}[d(x,y)+\|\un\zeta-\un\eta\|]\\
&<4d(y,\mathfs D)^{-a}(p+q)<8d(x,\mathfs D)^{-a}p+8d(y,\mathfs D)^{-a}q\ll p^{\beta/3}+q^{\beta/3}
\end{align*}
since $d(x,y)<25^{-1}(p+q)$ and $\|\un\zeta\|+\|\un\eta\|<10^{-2}(p+q)$.

\medskip
\noindent
{\sc Claim 3:} $R_x\colvec{1\\0}+\un\ve_1\propto R_y\colvec{1\\0}+\un\ve_2$ where
$\|\un\ve_1\|,\|\un\ve_2\|<3(p^{\beta/4}+q^{\beta/4})\leq 6\ve^{3/4}$.

\medskip
To see this, note that since $C_x\un a\propto (E+I)C_y\un b$, claim 1 gives that
$$
R_x\colvec{1\\0}+\underbrace{R_x\colvec{\pm p^{\beta/4}\\ \pm p^{\beta/4}}}_{=\un\ve_1}
\propto R_x\colvec{1\\0}+
\underbrace{R_y\colvec{\pm q^{\beta/4}\\ \pm q^{\beta/4}}+EC_y\un b}_{=\un\ve_2}
$$
and that $\|\un\ve_1\|\leq 2p^{\beta/4}$ and
$\|\un\ve_2\|\leq 2q^{\beta/4}+2(p^{\beta/3}+q^{\beta/3})<3(p^{\beta/4}+q^{\beta/4})$.
The remainder of the proof is identical to \cite[pp. 373]{Sarig-JAMS}.
\end{proof}

\bibliographystyle{alpha}
\bibliography{bibliography}{}

\end{document}